\def\th@exercise{%
  \normalfont 
  \thm@headpunct{:}%
}
\title{Newtheorem and theoremstyle test}
\author{Michael Downes\\updated by Barbara Beeton}
\newtheorem{thm}{Theorem}[section]
\newtheorem{cor}[thm]{Corollary}
\newtheorem{lem}[thm]{Lemma}
\theoremstyle{remark}
\newtheorem*{rmk}{Remark}
\theoremstyle{plain}
\newtheorem{Def}{Definition}
\newtheoremstyle{note}
  {3pt}
  {3pt}
  {}
  {}
  {\itshape}
  {:}
  {.5em}
  {}
\theoremstyle{note}
\newtheoremstyle{citing}
  {3pt}
  {3pt}
  {\itshape}
  {}
  {\bfseries}
  {.}
  {.5em}
  {\thmnote{#3}}
\theoremstyle{citing}
\newtheoremstyle{break}
  {9pt}
  {9pt}
  {\itshape}
  {}
  {\bfseries}
  {.}
  {\newline}
  {}
\theoremstyle{break}
\theoremstyle{exercise}
\theoremstyle{plain}
\let\lvert=|\let\rvert=|
\begin{document}

\title{On the radius pinching estimate and uniqueness of the CMC foliation
in asymptotically flat 3-manifolds}

\author{Shiguang Ma}
\maketitle
\begin{abstract}
In this paper we consider the uniqueness problem of the constant mean
curvature spheres in asymptotically flat 3-manifolds. We require the
metric have the form $g_{ij}=\delta_{ij}+h_{ij}$ with $h_{ij}=O_{4}(r^{-1})$
and $R=O(r^{-3-\tau}),\tau>0$ . We do not require the metric to be
close to Schwarzschild metric in any sense or to satisfy RT conditions.
We prove that, when the mass is not $0$, stable CMC spheres that
separate a certain compact part from infinity satisfy the radius pinching
estimate $r_{1}\leq Cr_{0}$, which in many cases is critical to prove
the uniqueness of the CMC spheres. As applications of this estimate,
we remove the radius conditions of the uniqueness result in \cite{Huang-CMC}
and \cite{NERZ-CMC} in some special cases.
\end{abstract}

\section{Introduction}

In Generality Relativity we usually study the asymptotically flat
3-manifolds. It can be considered as the initial data set of the Einstein
Equation. To study the geometry of such manifolds is interesting and
useful. In 1996 Huisken and Yau proved in \cite{Huisken-Yau} that
in asymptotically Schwarzschild manifolds with positive mass, there
exists a foliation of strictly stable constant mean curvature(CMC)
spheres. They also used this foliation to define the center of mass
of the asymptotically flat manifolds. The uniqueness of such CMC foliation
is a harder problem. If this CMC foliation is unique, it can be regarded
as a canonical object of the asymptotically flat end. Actually such
CMC foliation is proposed to be the abstract definition of the center
of mass of the asymptotically flat manifolds. Huisken and Yau proved
that for $1/2<q\leq1$, stable CMC sphere outside $B_{H^{-q}}(0)$
is unique, where $H$ is the constant mean curvature of the sphere.
In 2002, Jie Qing and Gang Tian removed this radius condition and
proved a sharper uniqueness theorem in \cite{Qing-Tian-CMC}. They
used a scaling invariant integral to detect the positive mass. To
calculate this integral they did blow-down analysis on the constant
mean curvature spheres in three different scales and used some technique
from harmonic maps to deal with the intermediate scale. Then in \cite{Huang-center-of-mass},
Lan-hsuan Huang considered the general asymptotically flat manifolds
with Regge-Teitelboim condition. She proved a similar result as Huisken
and Yau. Her uniqueness result also needs radius condition $\ensuremath{r_{1}\leq C_{1}r_{0}^{\frac{1}{a}}}$
for some $a$ satisfying $\ensuremath{\frac{5-q}{2(2+q)}<a\leq1}$
where 
\begin{align}
r_{0}=\inf\{|x|;x\in\Sigma\}, & r_{1}=\sup\{|x|;x\in\Sigma\},\label{inner and outer radius}
\end{align}
where $|x|=\sqrt{x_{1}^{2}+x_{2}^{2}+x_{3}^{2}}$ and $\Sigma$ is
the constant mean curvature sphere. In \cite{Eichmair-Metzger-iso-surface,Eichmair-Metzger-iso-all-dim},
Eichmair and Metzger considered the existence and uniqueness of isoperimetric
surfaces in asymptotically flat manifolds which are $C^{0}$ asymptotic
to Schwarzschild manifolds (for uniqueness they require more smoothness).
However, their uniqueness result is in the class of isoperimetric
surfaces which is stronger than the class of stable constant mean
curvature surfaces. In \cite{Shiguang-Ma-CMC} I studied the uniqueness
problem in $(m,k,\varepsilon)$-AF-RT manifolds which requires the
manifolds to be close to asymptotically Schwarzschild manifolds in
some Sobolev space and under the weaker radius condition $\log(r_{1})\leq Cr_{0}^{1/4}$
I proved the uniqueness of the stable CMC spheres outside a certain
compact set. Recently Christopher Nerz announced a result on the existence
and the uniqueness of CMC foliation in asymptotically flat manifolds
without RT conditions. The AF manifolds he studied is $C_{\frac{1}{2}+\varepsilon}^{2}$
with non 0 mass. His uniqueness result also requires radius conditions,
i.e. the CMC surfaces lies in the class $A_{r}^{\varepsilon,\eta}(C_{0},C_{1}).$
This condition implies 
\[
r_{1}(\Sigma)\leq Cr_{0}(\Sigma)
\]
for some $C>0.$ 

In sum, most of the above theorems (expect Qing and Tian's result)
need certain type of radius condition. $r_{0}(\Sigma)\geq Cr_{1}(\Sigma)$
is stronger than the radius conditions used in \cite{Huisken-Yau}
and in \cite{Huang-CMC}. In this paper, we prove that in a large
kind of asymptotically flat manifolds, stable CMC spheres that separate
a certain compact part from infinity will satisfy $r_{1}(\Sigma)\leq Cr_{0}(\Sigma)$
automatically. We don't need the manifolds to be close to Schwarzchild
and to satisfy RT conditions. 

\begin{Def}A 3-manifold $M$ with a Riemannian metric $g$ is called
$C_{q,\tau}^{4}$-asymptotically flat with $q\in(\frac{1}{2},1]$
if there is a compact set $K^{\prime}\subset M$ such that $M\backslash K^{\prime}$
is diffeomorphic to $\mathbb{R}^{3}\backslash B_{1}(0)$ and in the
Euclidean coordinates $\{x_{i}\}_{i=1}^{3}$, the metric takes the
form 
\[
g_{ij}(x)=\delta_{ij}+h_{ij}(x)
\]
where $h_{ij}=O_{4}(r^{-q})$ and the scalar curvature of $g_{ij}$
satisfies $R=O(r^{-3-\tau})$. Here, $f=O_{k}(|x|^{-q})$ means $\partial^{l}f=O(|x|^{-l-q})$
for $l=0,\cdots,k$.

\end{Def}

Sometimes we call $M\backslash K'$ an asymptotically flat end. 

In such $C_{q,\tau}^{4}$-AF manifolds, as the scalar curvature $R$
is integrable, one can define the mass, see \cite{Bartnik-mass}. 

\[
m=\lim_{r\rightarrow\infty}\frac{1}{16\pi}\int_{|x|=r}(h_{ij,j}-h_{jj,i})v_{g}^{i}d\mu_{g},
\]
 where $v_{g}$ and $d\mu_{g}$ are the unit normal vector and volume
form with respect to the metric $g$. In this paper, we omit the subscript
$g$ when we work in metric $g$ and we will not omit the subscript
$e$ when we work in Euclidean metric. 

Let $\Sigma$ be a constant mean curvature (CMC for short) surface.
We say it is stable if the second variation operator has only non-negative
eigenvalues when restricted to the functions with $0$ mean value,
i.e. 
\begin{equation}
\int_{\Sigma}(|A|^{2}+Ric(v,v))f^{2}d\mu\leq\int_{\Sigma}|\nabla f|^{2}d\mu
\end{equation}
 holds for function $f$ with $\int_{\Sigma}fd\mu=0$, where $A$
is the second fundamental form, and $Ric(v,v)$ is the Ricci curvature
in the normal direction with respect to the metric $g.$

Now let's state the main theorem of this paper.

\begin{thm}\label{thm1} Let $(M,g)$ be $C_{1,\tau}^{4}$-AF with
non 0 mass . Then there is a compact set $\tilde{K}\subset M$ and
$C>0$ such that any stable CMC sphere $\Sigma$ which separates $\tilde{K}$
from the infinity has 
\[
r_{1}(\Sigma)/r_{0}(\Sigma)\leq C
\]
 with $r_{0}$ and $r_{1}$ defined by (\ref{inner and outer radius}).

Moreover, for a sequence of stable CMC sphere $\Sigma_{n}$ which
separate $K'$ from infinity with 
\[
\lim_{n\rightarrow\infty}r_{0}(\Sigma_{n})=+\infty
\]
we have 
\[
\lim_{n\rightarrow\infty}r_{0}(\Sigma_{n})/r_{1}(\Sigma_{n})=1.
\]

\end{thm}

\begin{rmk}The condition that ``the spheres separate the compact
part from infinity'' is necessary as S.Brendle and M.Eichmair proved
a non unique result of CMC sphere without this condition, see \cite{OUTLYING-CMC}.
However, it is still unknown (except in Qing and Tian's case) that
whether a sphere that separates a sufficiently large compact set from
infinity with a particular constant mean curvature is unique . The
stability condition is necessary from technical point of view. In
\cite{NERZ-CMC}, Nerz did not use stability condition. However, he
used the following condition
\[
\int_{\Sigma}H^{2}(\Sigma)d\mu-16\pi(1-\dot{g})\leq\frac{C_{1}^{*}}{(r')^{\eta}}
\]
where $\dot{g}$ is the genus of the surfaces. In the case of spheres,
$\dot{g}=0.$ This condition is similar to the conclusion of Lemma
\ref{integral estimate} in this paper which is the consequence of
stability. Actually, this is the only place where we use stability
condition.

\end{rmk}

We can use Theorem \ref{thm1} to remove the radius condition in Huang's
work \cite{Huang-CMC} in the case of decay rate $q=1$. In Huang's
paper, she studied the following kind of asymptotically flat initial
data set.

\begin{Def}

A three-manifold $M$ with a Riemannian metric $g$ and a two-tensor
$K$ is called an initial data set (IDS) if $g$ and $K$ satisfy
the constraint equations

\begin{eqnarray}
R_{g}-|K|_{g}^{2}+(tr_{g}(K))^{2} & = & 16\pi\rho\nonumber \\
div_{g}(K)-d(tr_{g}(K)) & = & 8\pi J\label{eq:Constraint equations}
\end{eqnarray}
 where $R_{g}$ is the scalar curvature of the metric $g$, $tr_{g}(K)$
denotes $g^{ij}K_{ij}$, $\rho$ is the observed energy density, and
$J$ is the observed momentum density. 

\end{Def}

\begin{Def} We say $(M,g,K)$ is asymptotically flat initial data
set (AF-IDS) at the decay rate $q\in(\frac{1}{2},1]$ if it is an
initial data set, and there is a compact subset $\widetilde{K}\subset M$
such that $M\setminus\widetilde{K}$ is diffeomorphic to $\mathbb{R}^{3}\setminus B_{1}(0)$
and there exists coordinate $\{x^{i}\}$ with respect to which the
metric $g$ can be written as

\[
g_{ij}(x)=\delta_{ij}+h_{ij}(x)
\]

\begin{eqnarray*}
h_{ij}(x)=O_{5}(|x|^{-q}) &  & K_{ij}(x)=O_{1}(|x|^{-1-q})
\end{eqnarray*}
 Also, $\rho$ and $J$ satisfy

\begin{eqnarray*}
\rho(x)=O(|x|^{-2-2q}) &  & J(x)=O(|x|^{-2-2q})
\end{eqnarray*}

Here, $f=O_{k}(|x|^{-q})$ means $\partial^{l}f=O(|x|^{-l-q})$ for
$l=0,\cdots,k$. \end{Def} 

The RT conditions used by Huang is the following. 

\begin{Def} We say $(M,g,K)$ is AF-RT-IDS if it is AF-IDS, and g,
K satisfy 
\begin{eqnarray*}
h_{ij}^{odd}(x)=O_{2}(|x|^{-1-q}) &  & K_{ij}^{even}(x)=O_{1}(|x|^{-2-q})
\end{eqnarray*}
 Also, $\rho$ and $J$ satisfy

\begin{eqnarray*}
\rho^{odd}(x)=O(|x|^{-3-2q}) &  & J^{odd}(x)=O(|x|^{-3-2q})
\end{eqnarray*}
 where $f^{odd}(x)=f(x)-f(-x)$ and $f^{even}(x)=f(x)+f(-x).$\end{Def}

For this kind of AF-RT-IDS , the center of mass $C$ is defined as
\begin{equation}
C^{k}=\frac{1}{16\pi m}\lim_{r\rightarrow\infty}(\int_{|x|=r}x^{k}(h_{ij,i}-h_{ii,j})v^{j}d\mu-\int_{|x|=r}(h_{ik}v^{i}-h_{ii}v^{k})d\mu).
\end{equation}
From \cite{Huang-center-of-mass}, we know it is well defined.

In \cite{Huang-CMC}, Huang proved that for AF-RT manifold $(M,g,K)$
at the decay rate $q\in(\frac{1}{2},1]$ with non zero mass, there
exists a foliation of CMC spheres in the exterior region of the manifold.
Moreover if the mass is positive each CMC surface is strictly stable.
Under some radius condition Huang also proved the uniqueness of the
foliation. Namely

\paragraph{Huang's uniqueness theorem:}

Assume $(M,g,K)$ is AF-RT-IDS at the decay rate $q\in(\frac{1}{2},1]$
and $m>0$. Then there exists $ $some $\sigma_{1}$ so that if $\Sigma$
has the following properties,
\begin{enumerate}
\item $\Sigma$ is topologically a sphere,
\item $\Sigma$ has constant mean curvature $H=H_{\Sigma_{R}}$ for some
$R\geq\sigma_{1},$
\item $\Sigma$ is stable,
\item $r_{0}\geq H(\Sigma)^{-a}$ or $r_{1}\leq C_{1}r_{0}^{1/a}$ for some
$a$ satisfying $\frac{5-q}{2(2+q)}<a\leq1,$
\end{enumerate}
then $\Sigma$ is one of the CMC surface constructed by Huang's existence
result.$\square$

The most interesting case in general relativity is when the decay
rate $q=1.$ In this case we know the condition of Huang means $r_{1}\leq C_{1}r_{0}^{1/a}$
for $2/3<a\leq1.$ This is the same constraint condition as indicated
in \cite{Metzger-prescribe-mean-curvature}. From Theorem \ref{thm1}
we can remove the radius condition in the case of $q=1.$ We have

\begin{cor}\label{remove Huang's radius}

Suppose $(M,g,K)$ is an AF-RT-IDS manifold at the decay rate $1$
with positive mass. Then there exists a compact set $\tilde{K}$,
such that for any $H>0$ sufficiently small, there is only one stable
sphere with constant mean curvature $H$ that separates infinity from
$\tilde{K}.$

\end{cor}

This corollary follows from Theorem \ref{thm1} and Huang's uniqueness
theorem.

Nerz studied the existence and uniqueness of CMC foliation in $C_{\frac{1}{2}+\varepsilon}^{2}$-AF
manifolds without RT conditions in \cite{NERZ-CMC}. See Theorem 3.1
Theorem 3.2 and Theorem 3.3 in \cite{NERZ-CMC} for details of these
results. The method in this paper also applies to certain cases in
his setting. Namely, in $C_{1}^{4}$-AF manifolds, we can improve
the uniqueness result of Nerz.

\paragraph{Nerz's Theorem}

Let $(M,g)$ be $C_{\frac{1}{2}+\varepsilon}^{2}$-AF manifold with
non 0 mass. Then there is a constant $\sigma_{0}$, a compact set
$\hat{K}\subset M$ and a diffeomorphism 
\[
\bar{\Phi}:(\sigma_{0},+\infty)\times S^{2}\rightarrow M\backslash\hat{K}
\]
such that each sphere 
\[
\Sigma_{\sigma}=\bar{\Phi}(\sigma\times S^{2})
\]
 has constant mean curvature $H=\frac{2}{\sigma}$. Each sphere is
stable if and only if $m>0.$ 

Moreover, when $m\neq0$, all hypersurfaces $\Sigma\subset\mathscr{A}_{r'}^{\varepsilon,\eta}(C_{0}^{*},C_{1}^{*}),C_{0}^{*}\in[0,1)$
with constant mean curvature $H=\frac{2}{\sigma},\sigma\geq\sigma_{0}$
coincide. Here $\mathscr{A}_{r'}^{\varepsilon,\eta}(C_{0}^{*},C_{1}^{*})$
means the set of closed oriented genus $\dot{g}$ hypersurfaces $\Sigma$
with 
\begin{equation}
|\vec{z}|\leq C_{0}^{*}r'+C_{1}^{*}r'^{1-\eta},r'^{4+\eta}\leq\min_{\Sigma}|\bar{x}|^{5+2\varepsilon},\int_{\Sigma}H^{2}(\Sigma)d\mu-16\pi(1-\dot{g})\leq\frac{C_{1}^{*}}{(r')^{\eta}}\label{Nerz's three conditions}
\end{equation}
where $r'=\sqrt{\frac{|\Sigma|}{4\pi}}$ and $\vec{z}=\fint_{\Sigma}x_{i}d\mu_{e}$
is the Euclidean center of the surface $\Sigma.$ $\square$

If $\Sigma$ is a standard sphere in $\mathbb{R}^{3}$, $r'$ is the
radius of the sphere. In the case of CMC sphere in AF manifolds, this
is also true roughly. So in this case 
\[
|\vec{z}|\leq C_{0}^{*}r'+C_{1}^{*}r'^{1-\eta},C_{0}^{*}\in[0,1)
\]
 means the outer radius $r_{1}$ and inner radius $r_{0}$ satisfies
$r_{1}\leq Cr_{0}.$ 

From Theorem \ref{thm1}, we can get

\begin{cor}\label{remove Nerz's radius} Let $(M,g)$ be $C_{1,\tau}^{4}$-AF
manifold, with $m>0$, then there exists a compact set $\tilde{K}$
such that any stable sphere that separates $\tilde{K}$ from infinity
with constant mean curvature belongs to $\{\Sigma_{\sigma}|\sigma\geq\sigma_{0}\}$
constructed by Nerz.

\end{cor}

Now we sketch the proof of the paper and state the main contributions
of us. In the proof of the uniqueness, one central step is to calculate
the following integral on the stable CMC sphere
\begin{equation}
\int_{\Sigma}(H-H_{e})<v_{e},b>d\mu_{e}\label{central integral}
\end{equation}
where $H$ and $H_{e}$ are mean curvature in the metric $g$ and
the Euclidean metric and $v_{e}$ is unit normal vector in Euclidean
metric and $b$ is a constant vector to be chosen. See \cite{Huisken-Yau,Qing-Tian-CMC,Shiguang-Ma-CMC}
for different methods to calculate this integral. When the radius
pinching estimate $r_{1}\leq Cr_{0}$ fails to hold, one want to relate
this integral to the mass. To calculate this integral, first one need
do a priori estimates for $\Sigma,$ so that we have a good domain.
We carry out Qing and Tian's method in the general metrics. In Section
2 and Section 3, we do curvature estimates and blow down analysis.
We only require the manifolds to be $C_{q,\tau}^{4}$-AF. In Section
$4$, we use harmonic map techniques to analyze the intermediate part
where we start to require the metric to be $C_{1,\tau}^{4}$-AF. We
improve Qing and Tian's estimates on the second fundamental form in
Lemma \ref{The new estimate for the second fundamental form}. The
most hard point is to analyze the expression of (\ref{central integral}).
In the case of asymptotically Schwarzschild manifolds, one can reduce
(\ref{central integral}) to explicit form to calculate, see \cite{Qing-Tian-CMC}.
I used harmonic coordinates to reduce (\ref{central integral}) to
explicit form in \cite{Shiguang-Ma-CMC}. Actually, the metric needs
to be close to Schwarzschild in certain sense when doing so. When
the metric is only $C_{1,\tau}^{4}$-AF, it is impossible to reduce
it to explicit form. So one needs to find a general way to calculate
it. In Section 5, we found such a direct way and manage to relate
it to the mass when $r_{1}\leq Cr_{0}$ fails to hold. This needs
much calculation in geometry as well as analysis. RT conditions or
radius conditions of any type are not needed. It seems that this direct
method is the most natural way to calculate this integral. At last
we can prove Corollary \ref{remove Huang's radius} and Corollary
\ref{remove Nerz's radius} easily from Theorem \ref{thm1}.
\begin{description}
\item [{Acknowledgement}] I would like to thank my advisor Professor Gang
Tian for long time help and encouragement. I show my special thanks
to Professor Jie Qing for helpful discussions. I also thank Yalong
Shi for discussions on harmonic maps.
\end{description}

\section{Curvature estimates}

In this section and the next section we assume $(M,g)$ is $C_{q,\tau}^{4}$-AF
manifold at the decay rate $1/2<q\leq1$ and $\tau>0$. Let $\Sigma$
be a constant mean curvature sphere which separates the compact part
$K'$ with infinity . First as a small generalization of Lemma 5.2
in \cite{Huisken-Yau}, we have

\begin{lem} Let $X=x^{i}\frac{\partial}{\partial x^{i}}$ be the
Euclidean coordinate vector field and $r=(\Sigma(x^{i})^{2})^{1/2}$
. Then we have the estimate:
\[
\int_{\Sigma}<X,v>^{2}r^{-4}d\mu\leq H^{2}|\Sigma|.
\]
Moreover for each $a\geq a_{0}>2$ and $r_{0}$ sufficiently large
, we have:
\[
\int_{\Sigma}r^{-a}d\mu\leq C(a_{0})r_{0}^{2-a}H^{2}|\Sigma|
\]

\end{lem}

\begin{proof} Because the mean curvature $H$ is constant, then for
a smooth vector field $Y$ on $\Sigma$, we have the divergence formula:
\[
\int_{\Sigma}{\rm div}_{\Sigma}Yd\mu=H\int_{\Sigma}<Y,v>d\mu.
\]
 We choose $Y=Xr^{-a}$, $a\geq2$$ $ and $e_{\alpha}(p)$ is the
orthonormal basis of $T_{p}\Sigma$, $\alpha=1,2$. Suppose $e_{\alpha}=a_{\alpha}^{i}\frac{\partial}{\partial x^{i}}$,
it is obvious that $a_{\alpha}^{i}$ is bounded because the manifold
is asymptotically flat. Then we have: 
\begin{align*}
{\rm div_{\Sigma}}Y & ={\rm div_{\Sigma}}(Xr^{-a})=<\nabla_{e_{\alpha}}(Xr^{-a}),e_{\alpha}>\\
 & =r^{-a}{\rm div_{\Sigma}}X-ar^{-a-2}a_{\alpha}^{i}a_{\alpha}^{j}x^{i}x^{j}+O(r^{-a-q})\\
 & =r^{-a}{\rm div_{\Sigma}}X-ar^{-a-2}|X^{\tau}|^{2}+O(r^{-a-q}),
\end{align*}
where $X^{\tau}$ is the tangential projection of $X$.
\[
|{\rm div_{\Sigma}}X-2|=O(r^{-q}),
\]

Note that $|X^{\tau}|^{2}=r^{2}-<X,v>^{2}+O(r^{2-q})$, then combine
all of these we have: 
\begin{equation}
|(2-a)\int_{\Sigma}r^{-a}d\mu+a\int_{\Sigma}<X,v>^{2}r^{-a-2}d\mu-H\int_{\Sigma}<X,v>r^{-a}d\mu|\leq C\int_{\Sigma}r^{-a-q}d\mu\label{eq:1}
\end{equation}

Choosing $a=2$, from Holder inequality, we have:
\[
\int_{\Sigma}<X,v>^{2}r^{-4}d\mu\leq\frac{1}{4}H^{2}|\Sigma|+C\int_{\Sigma}r^{-2-q}d\mu.
\]
Then we choose $a=2+q,$
\[
\int_{\Sigma}r^{-2-q}d\mu\leq4r_{0}^{-q}(\int_{\Sigma}<X,v>^{2}r^{-4}d\mu+H^{2}|\Sigma|+C\int_{\Sigma}r^{-2-q}d\mu).
\]
Then from the two inequalities above, we can deduce:

\[
\int_{\Sigma}<X,v>^{2}r^{-4}d\mu\leq H^{2}|\Sigma|.
\]
Now from \ref{eq:1} we get 
\[
\int_{\Sigma}r^{-a}d\mu\leq C(a_{0}-2)^{-1}r_{0}^{2-a}H^{2}|\Sigma|.
\]

\end{proof}

Due to \cite{Huisken-Yau} Proposition 5.3, we can deduce integral
estimate for $|\AA|$ from the stability property.

\begin{lem}\label{integral estimate}

Suppose $\Sigma$ is a stable constant mean curvature sphere in the
$C_{q,\tau}^{4}$-asymptotically flat manifold. For $r_{0}$ sufficiently
large$\AA$, we have 
\begin{align}
\int_{\Sigma}|\AA|^{2}d\mu & \leq Cr_{0}^{-q}\label{eq:integral estimate for the second fundamental form}\\
\int_{\Sigma}H^{2}d\mu & =16\pi+O(r_{0}^{-q})\label{integral estimate of H}
\end{align}

\end{lem} 

\begin{proof}We use the stability of $\Sigma.$ For any $f$ which
satisfies
\[
\int_{\Sigma}fd\mu=0,
\]
we have 
\[
\int_{\Sigma}|\nabla f|^{2}d\mu\geq\int_{\Sigma}(|A|^{2}+Ric(v,v))f^{2}d\mu
\]
where $A$ is the second fundamental form of $\Sigma$ and $Ric$
is the Ricci curvature of $M$.

Choose $\psi$ to be a conformal map of degree 1 from $\Sigma$ to
the standard $S^{2}\subset\mathbb{R}^{3}$ . Each component $\psi_{i}$
of $\psi$ can be chosen such that $\int\psi_{i}d\mu=0$ , see \cite{P.Li-Yau-Conformal-invariant}
. We have for each $\psi_{i}$ 
\[
\int_{\Sigma}|\nabla\psi_{i}|^{2}d\mu=\frac{8\pi}{3}.
\]

Since $\ensuremath{\sum\psi_{i}^{2}\equiv1},$ we have
\[
\int_{\Sigma}(|A|^{2}+Ric(v,v))d\mu\leq8\pi.
\]
From Gauss equation

\[
\frac{1}{2}|A|^{2}+Ric(v,v)-\frac{1}{2}R+K=\frac{1}{2}H^{2}
\]
we have:

\[
|A|^{2}+Ric(v,v)=\frac{1}{2}|\AA|^{2}+\frac{3}{4}H^{2}+\frac{1}{2}R-K
\]
where $K$ is the Gauss curvature of $\Sigma$ and $\AA_{ij}=A_{ij}-\frac{H}{2}g_{ij}$.
Then
\[
\int_{\Sigma}\frac{1}{2}|\AA|^{2}+\frac{3}{4}H^{2}|\Sigma|\leq12\pi+Cr_{0}^{-q}H^{2}|\Sigma|
\]
 because $R=O(r^{-3-\tau})$. So we have 
\[
\int_{\Sigma}H^{2}d\mu=\ensuremath{H^{2}|\Sigma|\leq16\pi}+O(r_{0}^{-q}).
\]
 Using the Gauss equation in another way, we have
\begin{align*}
 & \int_{\Sigma}|\AA|^{2}d\mu\\
= & \int_{\Sigma}(|A|^{2}-\frac{H^{2}}{2})d\mu\\
= & \frac{1}{2}\int_{\Sigma}(|A|^{2}+Ric(v,v))d\mu+\frac{1}{2}\int_{\Sigma}(R-3Ric(v,v)-2K)d\mu\\
\leq & \int_{\Sigma}r^{-2-q}d\mu=O(r_{0}^{-q}).
\end{align*}

\end{proof}

\begin{lem}\label{Willmore functional} Suppose $\Sigma$ is a CMC
sphere in an asymptotically flat end $(R^{3}\setminus B_{1}(0))$,
then we have:

\[
\int_{\Sigma}H_{e}^{2}d\mu_{e}=16\pi+O(r_{0}^{-q}),
\]
 where $H_{e}$ denotes the mean curvature with respect to the background
Euclidean metric.

\end{lem}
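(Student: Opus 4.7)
My plan is to compare $H_e$ with $H$ pointwise on $\Sigma$, then integrate and combine with the estimate $\int_\Sigma H^2 \, d\mu = 16\pi + O(r_0^{-1})$ provided by the preceding lemma. The first step is to establish a pointwise bound: since $g_{ij} = \delta_{ij} + h_{ij}$ with $h = O(|x|^{-1})$ and $\partial h = O(|x|^{-2})$, writing the mean curvature in each metric (using $\nu_g = \nu_e + O(|h|)$ and $\Gamma^k_{ij} = O(|\partial h|)$) would give pointwise on $\Sigma$
\[
|H - H_e| \leq C(|h||A| + |\partial h|) = O(|x|^{-1}|A| + |x|^{-2}).
\]
Squaring and using $|x| \geq r_0$ on $\Sigma$,
\[
\int_\Sigma (H - H_e)^2 \, d\mu \leq C r_0^{-2} \int_\Sigma |A|^2 \, d\mu + C r_0^{-4} |\Sigma|.
\]

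Next I would bound each factor. The preceding lemma gives $\int_\Sigma |\AA|^2 \, d\mu = O(r_0^{-1})$ and $H^2 |\Sigma| \leq C$, hence $\int_\Sigma |A|^2 \, d\mu = \int_\Sigma |\AA|^2 \, d\mu + \tfrac{1}{2} H^2 |\Sigma| = O(1)$. For the area bound $|\Sigma| \leq C r_0^2$, I would combine $|\Sigma| \leq C/H^2$ with the lower bound $H \geq c/r_0$: the latter follows from the maximum principle at the point of $\Sigma$ closest to the origin (comparing $\Sigma$ with the Euclidean coordinate sphere $\{|x| = r_0\}$, which yields $H_e \geq 2/r_0$ there) together with the pointwise curvature estimate $|\AA| \leq C r_0^{-1}$ inherited from the prior work, which upgrades this to $H \geq 2/r_0 - O(r_0^{-2})$. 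These bounds then give $\int_\Sigma (H - H_e)^2 \, d\mu = O(r_0^{-2})$.

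To finish, I would expand
\[
\int_\Sigma H_e^2 \, d\mu = \int_\Sigma H^2 \, d\mu + 2H \int_\Sigma (H_e - H) \, d\mu + \int_\Sigma (H_e - H)^2 \, d\mu.
\]
The first term is $16\pi + O(r_0^{-1})$ by the preceding lemma; the cross term is controlled via Cauchy--Schwarz by $2H |\Sigma|^{1/2} \|H - H_e\|_{L^2(\Sigma)} \leq 2 \sqrt{H^2 |\Sigma|} \cdot O(r_0^{-1}) = O(r_0^{-1})$; the last term is $O(r_0^{-2})$. The main obstacle is securing the area bound $|\Sigma| \leq C r_0^2$: the pointwise comparison and the final expansion are routine, but this area bound is what forces us to invoke both the maximum principle at the closest point and the pointwise curvature estimate from the prior work.
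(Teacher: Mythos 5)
Your overall route---pointwise comparison of $H$ and $H_{e}$ from the explicit expansion of $H-H_{e}$, then integration using the preceding lemma---is the same as the paper's (one-line) proof, and the pointwise bound, the identity $|A|^{2}=|A-\frac{1}{2}Hf|^{2}+\frac{1}{2}H^{2}$ giving $\int_{\Sigma}|A|^{2}d\mu=O(1)$, and the final three-term expansion with Cauchy--Schwarz are all fine. The genuine gap is the step you yourself identify as the crux: the area bound $|\Sigma|\leq Cr_{0}^{2}$, equivalently $H\geq c/r_{0}$. Your maximum-principle argument runs in the wrong direction. At the innermost point $p$ (where $|p|=r_{0}$) the surface $\Sigma$ is tangent to the coordinate sphere $\{|x|=r_{0}\}$ and lies \emph{outside} the ball; writing both surfaces as graphs over the common tangent plane in the direction of the outward normal, the graph of $\Sigma$ lies above that of the sphere, so the second-derivative comparison gives $H_{e}(p)\leq2/r_{0}$, not $H_{e}(p)\geq2/r_{0}$. (The inequality $H_{e}\geq2/\rho$ is what one gets at the \emph{outermost} point $|x|=r_{1}$, and that is exactly how $r_{1}\leq C_{2}H^{-1}$ is proved.) So the touching argument only yields $H\leq Cr_{0}^{-1}$, and in fact $H\geq c/r_{0}$ cannot hold in the required generality: since $r_{1}\sim H^{-1}$, it is equivalent to $r_{1}/r_{0}\leq C$, which is precisely the conclusion of the key lemma that the present lemma is an ingredient for; the paper's blow-down at scale $r_{0}$ explicitly analyzes the regime $r_{0}H\rightarrow0$. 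Assuming it here would be circular, and without it your error term $Cr_{0}^{-4}|\Sigma|\leq Cr_{0}^{-4}H^{-2}$ is not $O(r_{0}^{-1})$ unless a radius condition is imposed---the very thing the paper is removing.

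The repair is standard and local: do not bound $\int_{\Sigma}|\partial h|^{2}d\mu$ by $r_{0}^{-4}|\Sigma|$. Instead use the local area bound $|\Sigma\cap B_{\rho}(0)|\leq C\rho^{2}$ for all $\rho\geq r_{0}$ (a consequence of Simon's monotonicity formula once the Willmore bound $\int_{\Sigma}H^{2}d\mu\leq C$ from the preceding lemma is in hand) and decompose $\Sigma$ dyadically into the pieces $\Sigma\cap(B_{2^{k+1}r_{0}}\setminus B_{2^{k}r_{0}})$; this gives
\[
\int_{\Sigma}|x|^{-4}d\mu\leq C\sum_{k\geq0}(2^{k}r_{0})^{-4}\,(2^{k+1}r_{0})^{2}\leq Cr_{0}^{-2}.
\]
With this replacement $\int_{\Sigma}(H-H_{e})^{2}d\mu\leq Cr_{0}^{-2}$, the cross term is $O(r_{0}^{-1})$ exactly as you computed, and the lemma follows.
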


\begin{proof} First we follow the calculation of Huisken and Ilmanen,
see\cite{Huisken-Ilmanen-IMCF}.Now 
\[
g_{ij}=\delta_{ij}+h_{ij}
\]

Suppose 
\[
g_{ij}|_{\Sigma}=f_{ij}~~~~\delta_{ij}|_{\Sigma}=\varepsilon_{ij}
\]
where $f^{ij}$ and $\varepsilon^{ij}$ are the corresponding inverse
matrices. $v,\omega,A,H,d\mu$ represent the normal vector , the dual
form of $v$, the second fundamental form , the mean curvature and
the volume form of $\Sigma$ in the metric $g$. And $v_{e},\omega_{e},A_{e},H_{e},\mu_{e}$
represent the corresponding ones in Euclidean metric. Through easy
calculation, we have 
\begin{align}
f^{ij}-\varepsilon^{ij} & =-f^{ik}h_{kl}f^{lj}\pm C|h|^{2}\label{eq:f^ij-e^ij}\\
g^{ij}-\delta^{ij} & =-g^{ik}h_{kl}g^{lj}\pm C|h|^{2}\label{eq:g^ij-delta^ij}
\end{align}
 
\begin{align}
\omega & =\frac{\omega_{e}}{|\omega_{e}|}\nonumber \\
v^{i} & =g^{ij}\omega_{j}\nonumber \\
\ensuremath{(\omega_{e})_{i}} & =\omega_{i}\pm C|P|\label{w_e-w}\\
v_{e}^{i} & =v^{i}+C|h|\label{v_e-v}\\
1-|\omega_{e}| & =\frac{1}{2}h_{ij}v^{i}v^{j}\label{1-w_e}
\end{align}
\begin{equation}
\Gamma_{ij}^{k}=\frac{1}{2}g^{kl}(\overline{\nabla}_{i}h_{jl}+\overline{\nabla}_{j}h_{il}-\overline{\nabla}_{l}h_{ij})\pm C|h||\overline{\nabla}h|\label{Gamma_{ijk}}
\end{equation}
and $\Gamma_{ij}^{k}$ is the Christoffel symbol for $\overline{\nabla}-\overline{\nabla}_{e}$,
where we denote the gradient operator in the metric $g$ and $\delta$
by $\overline{\nabla}$ and $\overline{\nabla}_{e}$ and because the
metric $g$ and $\delta$ are uniformly equivalent , we have:

\[
C^{-1}d\mu\leq d\mu_{e}\leq Cd\mu
\]

We have the formula: 
\begin{equation}
|\omega_{e}|_{g}A_{ij}=(A_{e})_{ij}-(\omega_{e})_{k}\Gamma_{ij}^{k}\label{key expression for second fundamental form}
\end{equation}

So we have

\begin{align*}
H-H_{e} & =f^{ij}A_{ij}-\varepsilon^{ij}(A_{e})_{ij}\\
 & =(f^{ij}-\varepsilon^{ij})A_{ij}+\varepsilon^{ij}A_{ij}(1-|\omega_{e}|_{g})+\varepsilon^{ij}(|\omega_{e}|_{g}A_{ij}-(A_{e})_{ij})
\end{align*}
 From (\ref{eq:f^ij-e^ij},\ref{eq:g^ij-delta^ij},\ref{w_e-w},\ref{v_e-v},\ref{1-w_e}),
we have 
\[
\varepsilon^{ij}A_{ij}(1-|\omega_{e}|_{g})=\frac{1}{2}Hv^{i}v^{j}h_{ij}\pm C|h|^{2}|A|
\]
 and using (\ref{eq:f^ij-e^ij},\ref{eq:g^ij-delta^ij},\ref{w_e-w},\ref{v_e-v},\ref{1-w_e},\ref{Gamma_{ijk}},\ref{key expression for second fundamental form})
we have:

\begin{align*}
 & \varepsilon^{ij}(|\omega_{e}|A_{ij}-(A_{e})_{ij})\\
= & -\varepsilon^{ij}(\omega_{e})_{k}\Gamma_{ij}^{k}\\
= & -\frac{1}{2}f^{ij}\omega_{k}g^{kl}(\overline{\nabla}_{i}h_{jl}+\overline{\nabla}_{j}h_{il}-\overline{\nabla}_{l}h_{ij})\pm C|h||\overline{\nabla}h|\\
= & -f^{ij}v^{l}\overline{\nabla}_{i}h_{jl}+\frac{1}{2}f^{ij}v^{l}\overline{\nabla}_{l}h_{ij}\pm C|h||\overline{\nabla}h|
\end{align*}

At last , we have 

\begin{eqnarray}
 &  & H-H_{e}=-f^{ik}h_{kl}f^{lj}A_{ij}+\frac{1}{2}Hv^{i}v^{j}h_{ij}-f^{ij}v^{l}\overline{\nabla}_{i}h_{jl}\label{H-H_e}\\
 &  & +\frac{1}{2}f^{ij}v^{l}\overline{\nabla}_{l}h_{ij}\pm C|h||\overline{\nabla}h|\pm C|h|^{2}|A|
\end{eqnarray}
\begin{align*}
\int_{\Sigma}H_{e}^{2}d\mu_{e} & =(1+O(r_{0}^{-q}))\int_{\Sigma}H_{e}^{2}d\mu\\
 & \leq(1+O(r_{0}^{-q}))(\int_{\Sigma}H^{2}d\mu+\int_{\Sigma}(H_{e}-H)^{2}+2|H(H_{e}-H)|d\mu)\\
 & \leq(1+O(r_{0}^{-q}))(16\pi+O(r_{0}^{-q})+\int_{\Sigma}(H_{e}-H)^{2}\\
 & +2(\int_{\Sigma}H^{2}d\mu)^{\frac{1}{2}}(\int_{\Sigma}(H_{e}-H)^{2}d\mu)^{\frac{1}{2}})
\end{align*}

\begin{align*}
\int(H_{e}-H)^{2}d\mu & \leq\int O(|x|^{-2q})|A|^{2}+H^{2}O(|x|^{-2q})+O(|x|^{-2-2q})d\mu\\
 & \leq\int O(|x|^{-2q})H^{2}+O(|x|^{-2q})|{\AA}|^{2}+O(|x|^{-2-2q})d\mu\\
 & =O(r_{0}^{-2q})
\end{align*}
so we have

\[
\int_{\Sigma}H_{e}^{2}d\mu_{e}\leq16\pi+O(r_{0}^{-q})
\]

On the other hand, by Euler formula, 
\[
K_{e}=\frac{1}{4}H_{e}^{2}-\frac{1}{2}|\AA_{e}|^{2}.
\]

So we have 
\[
\int H_{e}^{2}d\mu_{e}\geq16\pi
\]
 which implies: 
\[
\int_{\Sigma}H_{e}^{2}d\mu_{e}=16\pi+O(r_{0}^{-q}).
\]

\end{proof}

Based on Michael and Simon \cite{Michael-Simon-Sobolev}, we have
the following Sobolev inequality. 

\begin{lem}\label{Sobolev} Suppose $\Sigma$ is a CMC sphere in
the asymptotically flat end with $r_{0}$ sufficiently large and that
$\int_{\Sigma}H^{2}\leq C,$ then:
\begin{equation}
(\int_{\Sigma}f^{2}d\mu)^{1/2}\leq C(\int_{\Sigma}|\nabla f|d\mu+\int_{\Sigma}H|f|d\mu).\label{eq:Sobolev inequality}
\end{equation}

\end{lem}

\begin{proof}First this is valid for the surface in Euclidean Space.
So by the uniform equivalence of the metric $g$ and $\delta$ , we
have: 
\begin{align*}
 & (\int|f|^{2}d\mu)^{\frac{1}{2}}\\
\leq & C(\int|f|^{2}d\mu_{e})^{\frac{1}{2}}\\
\leq & C(\int|\nabla f|+H|f|+|H-H_{e}||f|d\mu)
\end{align*}
To bound the last term on the right , we notice:
\begin{align*}
 & \int|H-H_{e}||f|d\mu\\
\leq & \int O(|x|^{-q})|A||f|+O(|x|^{-q})H|f|+O(|x|^{-1-q})|f|d\mu\\
\leq & O(r_{0}^{-q})\int H|f|+(\int|\AA|^{2}d\mu)^{\frac{1}{2}}O(r_{0}^{-q})\|f\|_{L^{2}}+O(r_{0}^{-q})\|f\|_{L^{2}}
\end{align*}
So we can combine the two inequalities and choose $r_{0}$ sufficiently
large and get the desired result. 

\end{proof}

\begin{lem}\label{The relationship of H and r1} Suppose $\Sigma$
is a CMC sphere in an asymptotically flat end with $r_{0}(\Sigma)$
sufficiently large, then:

\[
C_{1}H^{-1}\leq diam(\Sigma)\leq C_{2}H^{-1},
\]
where the $diam(\Sigma)$ denotes the diameter of $\Sigma$ in the
Euclidean space $\mathbb{R}^{3}.$

In particular, if the surface $\Sigma$ separates the infinity from
the compact part $K'$, then:

\[
C_{1}H^{-1}\leq r_{1}(\Sigma)\leq C_{2}H^{-1}.
\]

\end{lem}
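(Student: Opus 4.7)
The plan is to prove the two inequalities by independent arguments, both leaning on the pointwise comparison
\[
|H - H_{e}| \le C|h|\,|A| + C|\overline{\nabla}h| \le Cr_{0}^{-1}H + Cr_{0}^{-2},
\]
which follows from the identity (\ref{H-H_e}), the asymptotic decay $|h| = O(r_{0}^{-1})$ and $|\overline{\nabla}h| = O(r_{0}^{-2})$, and the pointwise second-fundamental-form bound $|A| \le CH$ inherited from the curvature estimates in the author's original paper. For $r_{0}$ sufficiently large this yields the pointwise comparison $\tfrac{1}{2}H \le H_{e} \le \tfrac{3}{2}H$ at every point of $\Sigma$.

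For the upper bound $\mathrm{diam}(\Sigma) \le C_{2}H^{-1}$, I would apply the Euclidean monotonicity formula for surfaces with bounded mean curvature. Because $|H_{e}| \le \tfrac{3}{2}H$ pointwise, the density lower bound $|\Sigma \cap B_{r}(p)|_{e} \ge c_{1}r^{2}$ holds for every $p \in \Sigma$ and every $r \in (0, c_{0}H^{-1}]$. Fix $r = c_{0}H^{-1}$ and choose a maximal $r$-separated subset $\{p_{1}, \ldots, p_{N}\} \subset \Sigma$; disjointness of the half-radius balls $B_{r/2}(p_{i})$ together with $|\Sigma|_{e} \le 2|\Sigma|_{g} \le CH^{-2}$ (from the first lemma of this section) bounds $N$ by an absolute constant. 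By maximality the balls $B_{r}(p_{i})$ cover $\Sigma$, and since $\Sigma$ is connected, a standard chain argument through overlapping balls gives $\mathrm{diam}(\Sigma) \le 2Nr \le C_{2}H^{-1}$.

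For the lower bound $\mathrm{diam}(\Sigma) \ge C_{1}H^{-1}$, I would use an Alexandrov-type comparison. Let $B_{R}(p_{0})$ be the smallest closed Euclidean ball containing $\Sigma$ and pick any tangent point $q \in \Sigma \cap \partial B_{R}(p_{0})$. Writing both $\Sigma$ and $\partial B_{R}(p_{0})$ as graphs over their common tangent plane at $q$, the inclusion $\Sigma \subset B_{R}(p_{0})$ forces both principal curvatures of $\Sigma$ at $q$ (with the outward normal shared with $\partial B_{R}$) to be $\ge 1/R$, so $H_{e}(q) \ge 2/R$. Combined with $H_{e}(q) \le \tfrac{3}{2}H$ this yields $R \ge 4/(3H)$. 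Since any point of $\Sigma$ is the center of a closed Euclidean ball of radius $\mathrm{diam}(\Sigma)$ containing $\Sigma$, we have $R \le \mathrm{diam}(\Sigma)$, and therefore $\mathrm{diam}(\Sigma) \ge 4/(3H) =: C_{1}H^{-1}$.

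The ``in particular'' part is then immediate: if $\Sigma$ separates $\widetilde K$ (which contains the coordinate origin) from infinity and $\Omega$ denotes the bounded component of $\mathbb{R}^{3} \setminus \Sigma$, then $0 \in \Omega$, so $r_{0} \le \mathrm{diam}(\Omega) = \mathrm{diam}(\Sigma) \le C_{2}H^{-1}$. If $p, q \in \Sigma$ realize the diameter, the triangle inequality gives $r_{1} \ge \max(|p|,|q|) \ge \tfrac{1}{2}|p-q| \ge \tfrac{1}{2}C_{1}H^{-1}$, while $r_{1} \le r_{0} + \mathrm{diam}(\Sigma) \le 2C_{2}H^{-1}$. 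The main technical obstacle is the pointwise bound $|A| \le CH$: this is not proved within the excerpted preliminary lemmas, which give only the integrated estimate $\int_{\Sigma}|\tilde A|^{2} \le Cr_{0}^{-1}$, but it is one of the curvature estimates that the author imports from his original paper at the start of this section; without it neither the comparison at the single tangent point $q$ nor the uniform-constant density bound from the monotonicity formula would go through.
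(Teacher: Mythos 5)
The paper does not actually prove this lemma: it is one of the estimates imported verbatim from \cite{Shiguang Ma} (ultimately from \cite{QT} and \cite{L.Simon}), so there is no in-text argument to compare against. Judged on its own, your proof has a genuine gap, and it is the one you flag yourself but then wave away: the pointwise bound $|A|\leq CH$, and with it the pointwise comparison $\tfrac{1}{2}H\leq H_{e}\leq\tfrac{3}{2}H$ on all of $\Sigma$, is \emph{not} among the imported curvature estimates and cannot be assumed here. What the imported theorem gives is $|\AA|^{2}(x)\leq C|x|^{-2}r_{0}^{-1}$, hence only $|A|(x)\leq C|x|^{-1}r_{0}^{-1/2}+CH$; near the inner radius this is of size $r_{0}^{-3/2}$, and the error terms in (\ref{H-H_e}) contribute $C|\overline{\nabla}h|\sim r_{0}^{-2}$ there. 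For these to be dominated by $H$ you would need $H^{-1}\lesssim r_{0}^{3/2}$ (resp.\ $H^{-1}\lesssim r_{0}^{2}$), which is not known a priori --- a priori one only has $r_{0}\leq CH^{-1}$, and bounding $H^{-1}$ by a power of $r_{0}$ is essentially the content of the key lemma this whole section is building toward. Indeed the blow-down picture the paper is guarding against is exactly a surface that looks like a plane at distance $r_{0}$ from the origin glued to a sphere of radius $H^{-1}\gg r_{0}$, on which $|A|\leq CH$ fails in the inner region. So both of your main steps --- the density lower bound for the covering argument (which needs $|H_{e}|r\leq\varepsilon$ at scale $r\sim H^{-1}$) and the Alexandrov comparison at the tangent point $q$ (which needs $H_{e}(q)\leq\tfrac{3}{2}H$ at a point whose location you do not control) --- rest on an estimate that is unavailable and, at this stage of the argument, possibly false.

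The standard proof avoids pointwise curvature information entirely and uses only integral quantities already listed in this section: $H^{2}|\Sigma|\leq C$ together with $\int_{\Sigma}H^{2}d\mu=16\pi+O(r_{0}^{-1})$ and the uniform equivalence of $g$ and $g_{e}$ give $|\Sigma|_{e}=(16\pi+o(1))H^{-2}$, while $\int_{\Sigma}H_{e}^{2}d\mu_{e}=16\pi+O(r_{0}^{-1})$ is the second lemma. One then invokes the diameter estimate of L.~Simon \cite{L.Simon} (Lemma 1.1 there): for any closed connected surface in $\mathbb{R}^{3}$,
\begin{equation*}
c\left(\frac{|\Sigma|_{e}}{\int_{\Sigma}H_{e}^{2}d\mu_{e}}\right)^{1/2}\leq diam(\Sigma)\leq C\left(|\Sigma|_{e}\int_{\Sigma}H_{e}^{2}d\mu_{e}\right)^{1/2},
\end{equation*}
and both sides are comparable to $H^{-1}$. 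Your deduction of the ``in particular'' statement from the diameter bounds is fine and would survive unchanged once the main inequalities are obtained this way.
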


\begin{proof}We already know that:

\[
\int_{\Sigma}H_{e}^{2}d\mu_{e}=16\pi+O(r_{0}^{-q})
\]

Then from \cite{L.Simon-Willmore} Lemma 1.1, we know

\[
\sqrt{\frac{2|\Sigma|_{e}}{F(\Sigma)}}\leq diam(\Sigma)\leq C\sqrt{|\Sigma|_{e}F(\Sigma)}
\]
where $F(\Sigma)=\frac{1}{2}\int_{\Sigma}H_{e}^{2}d\mu_{e}$ is the
Willmore functional and $|\Sigma|_{e}$ is the volume of $\Sigma$
with respect to the Euclidean metric. But the Euclidean metric is
uniformly equivalent to $g$. From 
\[
\int_{\Sigma}H^{2}d\mu=16\pi+O(r^{-q}),
\]
we know $\tilde{C}_{1}H^{-1}\leq|\Sigma|_{e}\leq\tilde{C}_{2}H^{-1}$
for some $\tilde{C}_{1},\tilde{C}_{2}>0.$ So we get the result. 

\end{proof}

To get the pointwise estimate for $\AA$, we use the Simons identity

\begin{lem}\cite{Schoen-Simon-Yau-minimal-surface} Suppose $\Sigma$
is a hypersurface in a Riemannian manifold $(M,g)$. Then the second
fundamental form satisfies the following identity:

\begin{align*}
\Delta A_{ij}= & \nabla_{i}\nabla_{j}H+HA_{ik}A_{jk}-|A|^{2}A_{ij}+HR_{3i3j}-A_{ij}R_{3k3k}+A_{jk}R_{klil}\\
 & +A_{ik}R_{kljl}-2A_{lk}R_{iljk}+\overline{\nabla}_{j}R_{3kik}+\overline{\nabla}_{k}R_{3ijk}
\end{align*}
where $R_{ijkl}$ and $\overline{\nabla}$ are the curvature and gradient
operator of $(M,g)$. Then for CMC surfaces we can deduce the following
inequality:

\begin{align*}
-|\AA|\Delta|\AA|\leq & |\AA|^{4}+CH|\AA|^{3}+CH^{2}|\AA|^{2}+C|\AA|^{2}|x|^{-2-q}\\
 & +CH|\AA||x|^{-2-q}+C|\AA||x|^{-3-q}
\end{align*}

We also need an inequality for $\nabla\AA$ because we also want to
estimate the higher derivative: 

\begin{align*}
-|\nabla\AA|\Delta|\nabla\AA|\leq & C|\nabla\AA|^{2}(|\AA|^{2}+H|\AA|+H^{2}+O(|x|^{-2-q}))\\
 & +|\nabla\AA|((|\AA|^{2}+H|\AA|+H^{2})O(|x|^{-2-q})+(|\AA|+H)O(|x|^{-3-q})+O(|x|^{-4-q}))
\end{align*}

\end{lem}

Then from Simons identity and Sobolev inequality Lemma \ref{Sobolev}
we have the following basic curvature estimate:

\begin{thm}\label{Main curvature estimates}\cite{Qing-Tian-CMC}
Suppose that $(\mathbb{R}^{3}\setminus B_{1}(0),g)$ is an asymptotically
flat end. Then there exist positive numbers $\sigma_{0}$, $\delta_{0}$
such that for any CMC sphere in the end, which separates the infinity
from the compact part, we have:

\[
|\AA|^{2}(x)\leq C|x|^{-2}\int_{B_{\delta_{0}|x|}(x)}|\AA|^{2}d\mu+C|x|^{-2-2q}\leq C|x|^{-2}r_{0}^{-q}
\]

\[
|\nabla\AA|^{2}(x)\leq C|x|^{-2}\int_{B_{\delta_{0}|x|}(x)}|\nabla\AA|^{2}d\mu+C|x|^{-4-2q}\leq C|x|^{-4}r_{0}^{-q/2}
\]
provided that $r_{0}\geq\sigma_{0}$.

\end{thm}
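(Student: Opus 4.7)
The plan is to run a standard Choi--Schoen type Moser iteration for $|\AA|^{2}$ (and analogously for $|\nabla\AA|^{2}$) on intrinsic balls of radius comparable to $|x|$, using Simons' identity together with the Sobolev inequality of Lemma~\ref{Sobolev}. The scaling $r\sim|x|$ is natural because the ambient curvature decays like $|x|^{-3}$, so after rescaling by $|x|^{-1}$ every term becomes of unit size and the estimate is scale-invariant.

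First I would derive the Simons inequality for a CMC surface $\Sigma$ in $(M,g)$. After inserting the AF decay of the ambient Riemann tensor and its derivative, it takes the schematic form
\[
\tfrac{1}{2}\Delta|\AA|^{2}\;\ge\;|\nabla\AA|^{2}\;-\;C|\AA|^{4}\;-\;C|x|^{-3}|\AA|^{2}\;-\;C|x|^{-4}|\AA|.
\]
Fix $y\in\Sigma$ and put $r=\delta_{0}|y|$ with $\delta_{0}$ to be chosen small. The preceding lemma gives
$\int_{B_{r}(y)\cap\Sigma}|\AA|^{2}d\mu\le Cr_{0}^{-1}$,
which can be made as small as we wish once $r_{0}$ is large. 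This smallness is what allows the quartic term in Simons' identity to be absorbed by the gradient term after integration against a cut-off.

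Next I would multiply the Simons inequality by $|\AA|^{2(p-1)}\eta^{2}$ with a standard cut-off $\eta$ supported in $B_{r}(y)$, integrate by parts on $\Sigma$, and apply Lemma~\ref{Sobolev} to $f=|\AA|^{p}\eta$ to pass from $L^{p}$ to $L^{2p}$ information. The auxiliary $\int H|f|$ term in the Sobolev inequality is harmless here because $H\sim|y|^{-1}$ on $B_{r}(y)$. Iterating $p\mapsto 2p$ in the usual way produces
\[
\sup_{B_{r/2}(y)\cap\Sigma}|\AA|^{2}\;\le\;Cr^{-2}\int_{B_{r}(y)\cap\Sigma}|\AA|^{2}d\mu\;+\;C|y|^{-4},
\]
with the $C|y|^{-4}$ arising from the inhomogeneous $|x|^{-4}|\AA|$ contribution. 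Substituting the global bound $\int_{\Sigma}|\AA|^{2}\le Cr_{0}^{-1}$ and using $|y|\ge r_{0}$ yields the stated $C|y|^{-2}r_{0}^{-1}$ bound. For $|\nabla\AA|^{2}$ one differentiates Simons' identity once more to get an analogous differential inequality and repeats the procedure; each additional derivative costs a factor of $|x|^{-1}$, giving the inhomogeneous $C|y|^{-6}$ term, and interpolating with the just-established $L^{\infty}$-bound on $|\AA|$ turns the $r_{0}^{-1}$ improvement into $r_{0}^{-1/2}$ (since one power of $|\AA|$ in $|\AA|^{4}$-type Kato terms has been replaced by a pointwise estimate of order $r_{0}^{-1/2}|y|^{-1}$).

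The main obstacle is the Moser iteration itself: it has to be carried out with the particular form of the Sobolev inequality of Lemma~\ref{Sobolev}, which is of $L^{1}\to L^{2}$ type and contains the extra $\int H|f|$ term, and one must carefully track the inhomogeneous ambient-curvature contributions to guarantee that they remain subdominant ($C|y|^{-4}$ for $|\AA|^{2}$ and $C|y|^{-6}$ for $|\nabla\AA|^{2}$) in the final pointwise bound rather than corrupting the integral-average main term.
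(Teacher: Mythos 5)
Your proposal matches the paper's intended argument: the theorem is stated without proof, the author noting only that it follows ``from Simons identity and Sobolev inequality Lemma \ref{Sobolev}'' with details deferred to \cite{Shiguang Ma}, and your Choi--Schoen type Moser iteration on extrinsic balls of radius $\delta_{0}|x|$, absorbing the quartic term via the smallness $\int_{\Sigma}|A|^{2}\leq Cr_{0}^{-1}$, is exactly that standard argument. Your bookkeeping of the inhomogeneous ambient-curvature contributions ($C|x|^{-4}$ and $C|x|^{-6}$) and of the smallness factors $r_{0}^{-1}$ and $r_{0}^{-1/2}$ is consistent with the stated conclusion.
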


\begin{proof}In the Sobolev inequality (\ref{eq:Sobolev inequality})
we take $f=u^{2},$ then we have

\begin{align*}
(\int_{\Sigma}u^{4}d\mu)^{\frac{1}{2}} & \leq C(2\int_{\Sigma}|u||\nabla u|d\mu+\int_{\Sigma}Hu^{2}d\mu)\\
 & \leq C(\int_{\Sigma}u^{2})^{\frac{1}{2}}(\int_{\Sigma}|\nabla u|^{2}d\mu)^{\frac{1}{2}}+C(\int_{supp(u)}H^{2}d\mu)^{\frac{1}{2}}(\int_{\Sigma}u^{4}d\mu)^{\frac{1}{2}}.
\end{align*}
 We need the following lemma

\begin{lem}For any $\varepsilon>0$, we can find a uniform $\delta_{0}$
sufficiently small such that for any $x\in\Sigma$ , we have: 

\[
\int_{B_{\delta_{0}|x|}(x)}H^{2}d\mu\leq\varepsilon
\]

\end{lem}

\begin{proof}The metric $g$ is uniform equivalent to Euclidean metric
$\delta$. So we only need to prove that there exist $C$, such that:
\[
|B_{\delta_{0}|x|}(x)|_{e}\leq C\delta_{0}^{2}|x|^{2}.
\]
Then we have 
\[
H^{2}|B_{\delta_{0}|x|}(x)|_{e}\leq C\delta_{0}^{2}|x|^{2}H^{2}\leq C\delta_{0}^{2}.
\]
From the proof of Lemma 1.1 in \cite{L.Simon-Willmore}, we know,
for any $x\in\Sigma$, $B_{\sigma}(x)$ denotes the Euclidean ball
of radius $\sigma$ with center $x$ in $\mathbb{R}^{3}$, $\Sigma_{\sigma}=\Sigma\cap B_{\sigma}(x)$,
then there exists $C$ such that for $0<\sigma\leq\rho<\infty$

\[
\sigma^{-2}|\Sigma_{\sigma}|_{e}\leq C(\rho^{-2}|\Sigma_{\rho}|_{e}+F(\Sigma_{\rho}))
\]
where $F(\Sigma_{\rho})$ is the Willmore functional. $C$ doesn't
depend on $\Sigma,\sigma,\rho$. Let $\rho\rightarrow\infty$ , $\rho^{-2}|\Sigma_{\rho}|\rightarrow0$,
so we have: 
\[
\sigma^{-2}|\Sigma_{\sigma}|_{e}\leq CF(\Sigma)\leq C.
\]
So we proved this lemma.

\end{proof}

So if $supp(u)\subset B_{\delta_{0}|x|}(x)$, we have the following
scaling invariant Sobolev inequality:
\[
(\int_{\Sigma}u^{4}d\mu)^{\frac{1}{2}}\leq C(\int_{\Sigma}u^{2})^{\frac{1}{2}}(\int_{\Sigma}|\nabla u|^{2}d\mu)^{\frac{1}{2}}
\]

\begin{lem}\cite{Qing-Tian-CMC} Suppose that a nonnegative function
$v\in L^{2}$ solves
\[
-\Delta v\leq fv+h
\]
on $B_{2R}(x_{0})$, where
\[
\int_{B_{2R}(x_{0})}f^{2}d\mu\leq CR^{-2}
\]
and $\ensuremath{h\in L^{2}(B_{2R}(x_{0}))}.$ And suppose that 
\[
(\int_{\Sigma}u^{4}d\mu)^{\frac{1}{2}}\leq C(\int_{\Sigma}u^{2})^{\frac{1}{2}}(\int_{\Sigma}|\nabla u|^{2}d\mu)^{\frac{1}{2}}
\]
holds for all $u$ with support inside $B_{2R}(x_{0})$. Then 
\[
\sup_{B_{R}(x_{0})}v\leq CR^{-1}\|v\|_{L^{2}(B_{2R}(x_{0}))}+CR\|h\|_{L^{2}(B_{2R(x_{0})})}.
\]

\end{lem}The proof of this lemma is Moser's iteration. See \cite{Qing-Tian-CMC}
Lemma 2.6 for the proof.

We have
\begin{align*}
-\Delta|\AA| & \leq(|\AA|^{2}+H^{2}+H|\AA|+C|x|^{-2-q})|\AA|+CH|x|^{-2-q}+C|x|^{-3-q}\\
 & =f_{1}|\AA|+h_{1}
\end{align*}
\begin{align*}
-\Delta|\nabla\AA|\leq & C|\nabla\AA|(|\AA|^{2}+H|\AA|+H^{2}+O(|x|^{-2-q}))\\
 & +((|\AA|^{2}+H|\AA|+H^{2})O(|x|^{-2-q})+(|\AA|+H)O(|x|^{-3-q})+O(|x|^{-4-q}))\\
= & f_{2}|\nabla\AA|+h_{2}.
\end{align*}
We need to prove that $\|f_{1}\|_{L^{2}(B_{2\delta_{0}|x|}(x))}^{2},\|f_{2}\|_{L^{2}(B_{2\delta_{0}|x|}(x))}^{2}\leq C|x|^{-2}$
, 

Choose a proper cut off function on $\Sigma$, by using the Simons
identity and inequality we can get 
\[
\int_{\Sigma\cap B_{2\delta_{0}|x_{0}|}(x_{0})}|\AA|^{4}d\mu\leq C|x_{0}|^{-2}\int_{\Sigma}|\AA|^{2}d\mu
\]
From this, we can get the estimate for $f_{1}$ and $f_{2}$. And
it is easy to show that $\|h_{1}\|_{L^{2}(B_{2\delta_{0}|x|}(x))}^{2}=O(|x|^{-4-2q})$
and $\|h_{2}\|_{L^{2}(B_{2\delta_{0}|x|}(x))}^{2}=O(|x|^{-6-2q})$
in the same way. 

Now we know: 
\[
\int_{B_{\delta_{0}|x|}(x)}|\AA|^{2}d\mu\leq Cr_{0}^{-q}
\]
 and 
\[
\int_{B_{\delta_{0}|x|}(x)}|\nabla\AA|^{2}d\mu\leq|x|^{-2}(\int_{B_{\delta_{0}|x|}(x)}|\AA|^{2}d\mu)^{\frac{1}{2}}\leq|x|^{-2}r_{0}^{-\frac{q}{2}}.
\]
 The first inequality follows from (\ref{eq:integral estimate for the second fundamental form}).
The second one follows from the first one and Simon's identity. Finally
we proved Theorem \ref{Main curvature estimates}.

\end{proof}

\begin{rmk}From Theorem \ref{Main curvature estimates} and (\ref{key expression for second fundamental form})
and the differentiation of (\ref{key expression for second fundamental form})
and Lemma \ref{The relationship of H and r1}, we have 

\begin{align}
|A_{e}| & \leq C|x|^{-1}r_{0}^{-\frac{q}{2}}\label{estimate of second fundamental in Euclidean metric}\\
|\nabla_{e}A_{e}| & \leq C|x|^{-2}r_{0}^{-\frac{q}{4}}\label{estimates of derivative of second fundamental in Euclidean metric}
\end{align}

\end{rmk}

\section{Blow down analysis}

For any $r>0$, define a new manifold $(M^{r},g^{r})$ through 
\[
\Phi_{r}:M\backslash K'\rightarrow M^{r}
\]
 is a diffeomorphism and 
\begin{equation}
g^{r}=\frac{1}{r^{2}}(\Phi_{r}^{-1})^{*}(g).\label{g^r}
\end{equation}
 If $\{x_{i}\}$ is the coordinate on $M\backslash K$, we set $ $
\begin{equation}
\bar{x}_{i}=\frac{1}{r}x_{i}(\Phi_{r}^{-1}).\label{x-bar}
\end{equation}
 Then actually $M^{r}$ is diffeomorphic to $\mathbb{R}^{3}\backslash B_{\frac{1}{r}}(0)$,
and the coordinates $\{\bar{x}_{i}\}_{i=1}^{3}$ can be regarded as
the Euclidean coordinates on $\mathbb{R}^{3}\backslash B_{\frac{1}{r}}(0)$.
When we take limit of the functions on $M^{r}$ or surfaces in $M^{r}$
(in Hausdorff sense or smooth sense) as $r\rightarrow\infty,$ actually
we identify $\{\bar{x}_{i}\}$ in different $M_{r}$. So the limit
function exists on $\mathbb{R}^{3}\backslash\{0\}$ which is the limit
space of $\mathbb{R}^{3}\backslash B_{\frac{1}{r}}(0)$ and the limit
surface (either in Hausdorff sense or in smooth sense) exists in $\mathbb{R}^{3}$
which can be regarded as the completion of $\mathbb{R}^{3}\backslash\{0\}$.
So we know 
\[
\lim_{r\rightarrow\infty}g_{\bar{i}\bar{j}}^{r}=\lim_{r\rightarrow\infty}\frac{1}{r^{2}}(\Phi_{r}^{-1})^{*}(g)(r(\Phi_{r})_{*}(\partial_{i}),r(\Phi_{r})_{*}(\partial_{i}))=\delta_{ij}=\delta_{\bar{i}\bar{j}}.
\]

Now we have the three blow-downs as in\cite{Qing-Tian-CMC}. First
we consider
\[
\Sigma^{\frac{2}{H}}=\Phi_{\frac{2}{H}}(\Sigma)\subset M^{\frac{2}{H}}.
\]

\begin{lem}\label{Blow down by H/2} Suppose that $\{\Sigma_{i}\}$
is a sequence of stable constant mean curvature spheres in a given
asymptotically flat end $(M\backslash K',g)$ and that

\begin{equation}
\lim_{i\rightarrow\infty}r_{0}(\Sigma_{i})=\infty.
\end{equation}
And suppose that $\Sigma_{i}$ separates the infinity from the compact
part $K'$. Then, there is a subsequence of $\{\Sigma_{i}^{\frac{2}{H}}\}$
which converges in Hausdorff sense to a round sphere $S_{1}^{2}(a)\subset(\mathbb{R}^{3},\delta)$
of radius $1$ and centered at $a\in\mathbb{R}^{3}$. Moreover, the
convergence is in $C^{2,\alpha}$ sense away from the origin.

Moreover if $\lim_{i\rightarrow\infty}\frac{r_{1}(\Sigma_{n})}{r_{0}(\Sigma_{n})}=+\infty$,
we have $|a|=1$, that is, the origin lies on the limit surface.

\end{lem}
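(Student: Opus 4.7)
The plan is to verify the hypotheses of L. Simon's compactness theorem for the rescaled surfaces $\widetilde{\Sigma}_i=(H_i/2)\Sigma_i$ and then to identify the limit using the curvature estimates of Section 2. I split this into three steps.

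First, I would check that the rescaled surfaces sit in a fixed Euclidean region with the right integral quantities. By the diameter estimate $r_1(\Sigma_i)\leq C_2 H_i^{-1}$, we get $r_1(\widetilde{\Sigma}_i)=(H_i/2)r_1(\Sigma_i)\leq C_2/2$, so every $\widetilde{\Sigma}_i$ lies in a fixed compact subset of $\mathbb{R}^3$. The Willmore integral $\int H_e^2\,d\mu_e$ is invariant under Euclidean dilations, so the Euclidean mean-curvature lemma gives $\int_{\widetilde{\Sigma}_i}\widetilde{H}_e^{\,2}\,d\mu_e=16\pi+O(r_0(\Sigma_i)^{-1})\to 16\pi$. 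Similarly, the stability bound $\int_{\Sigma_i}|\AA|^2 d\mu\leq C r_0(\Sigma_i)^{-1}$ combined with scale invariance of $\int|\AA|^2 d\mu$ gives $\int_{\widetilde{\Sigma}_i}|\widetilde{\AA}|^2 d\mu_e\to 0$. Finally, $|\widetilde{\Sigma}_i|_e\to 4\pi$ because $H_i^2|\Sigma_i|\to 16\pi$ and $|\Sigma_i|_e=(1+o(1))|\Sigma_i|$ in the asymptotic region.

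Second, I would apply L. Simon's Theorem 3.1 directly. The uniform area bound, uniform Willmore bound, and containment in a fixed compact set give subsequential Hausdorff convergence of $\{\widetilde{\Sigma}_i\}$ to an integral varifold $\Sigma_\infty$ of bounded generalized mean curvature. Away from $y=0$, rescaling the pointwise bound $|\AA|^2(x)\leq C|x|^{-2}r_0^{-1}$ yields $|\widetilde{\AA}|^2(y)\leq C|y|^{-2}r_0(\Sigma_i)^{-1}\to 0$, and the rescaled ambient metric $\delta+h(\lambda_i^{-1}y)$ converges to $\delta$ in $C^5_{\mathrm{loc}}(\mathbb{R}^3\setminus\{0\})$ since $h(\lambda_i^{-1}y)=O(\lambda_i|y|^{-1})$. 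Standard elliptic regularity for the CMC equation then upgrades Hausdorff convergence to $C^{2,\alpha}_{\mathrm{loc}}$ convergence on compacta of $\mathbb{R}^3\setminus\{0\}$.

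Third, I would identify the limit. Since $|\widetilde{\AA}|^2$ vanishes both in $L^2$ and pointwise off the origin, every smooth component of $\Sigma_\infty$ is totally umbilic in the Euclidean sense, i.e.\ a round sphere or a plane. Because $\widetilde{\Sigma}_i$ has constant mean curvature equal to $2$ in the rescaled ambient metric and the Euclidean mean curvature $\widetilde{H}_e=2H_{e,i}/H_i$ differs from $2$ by $o(1)$, the limit has Euclidean mean curvature $2$, ruling out planes and pinning the sphere radius to $1$. Area convergence $|\widetilde{\Sigma}_i|_e\to 4\pi$ and connectedness then force $\Sigma_\infty$ to be a single unit sphere $S_1^2(a)$. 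The main obstacle is the singular point $y=0$: the curvature estimate blows up there, the rescaled ambient metric is not defined on $|y|<\lambda_i$, and one must rule out density concentration of $\Sigma_\infty$ at the origin, which is precisely why the conclusion is only $C^{2,\alpha}$ away from the origin. This exclusion is handled by Simon's monotonicity formula for surfaces with bounded generalized mean curvature, combined with the global Willmore bound $16\pi$ (the threshold below which no concentrated unit of density can form).
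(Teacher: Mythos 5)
Your proposal is correct and follows essentially the same route as the paper, which proves this lemma simply by invoking L.~Simon's Theorem~3.1 together with the curvature and Willmore-energy estimates of Section~2 ($\int_{\Sigma_{i}}H_{e}^{2}d\mu\rightarrow16\pi$, the diameter bound $C_{1}H^{-1}\leq r_{1}\leq C_{2}H^{-1}$, and the pointwise decay of the traceless second fundamental form). Your write-up merely supplies the scaling bookkeeping and the limit identification that the paper leaves implicit in the citation.
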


\begin{proof} Suppose that there is a sequence of stable constant
mean curvature spheres $\{\Sigma_{i}\}$ such that

\[
\lim_{i\rightarrow\infty}r_{0}(\Sigma_{i})=\infty,
\]

we have known from Lemma \ref{Willmore functional} that 

\[
\lim_{i\rightarrow\infty}\int_{\Sigma_{i}}H_{e}^{2}d\mu_{e}=16\pi.
\]

Then from Theorem 3.1 of \cite{L.Simon-Willmore}, we can find a subsequence
which converge in Hausdorff sense to a genus $0$ surface, that is
a sphere. Because 
\[
\lim_{i\rightarrow\infty}|\Sigma_{i}^{\frac{2}{H}}|_{e}=\lim_{i\rightarrow\infty}|\Sigma_{i}^{\frac{2}{H}}|_{g^{\frac{2}{H}}}=\lim_{i\rightarrow\infty}\frac{1}{4}\int_{\Sigma_{i}}H^{2}d\mu=4\pi,
\]
the limit surface is a unit sphere. Away from the origin, the second
fundamental form and its derivative of $\Sigma_{i}^{\frac{2}{H}}$
have uniform bounds. So the convergence is $C^{2,\alpha}$ away from
the origin.

The second part follows from $|a|\leq1$ (because $\Sigma_{n}$ separates
$K'$ from infinity) and 
\[
\tilde{r}_{1}=\frac{H}{2}r_{1},\tilde{r}_{0}=\frac{H}{2}r_{0}.
\]

\end{proof}

Then, we use $r_{0}^{-1}$ to blow down the surface

\begin{equation}
\Sigma^{r_{0}}=\Phi_{r_{0}}(\Sigma)
\end{equation}

\begin{lem}\label{Blow down by r_0^{-1}} Suppose that $\{\Sigma_{i}\}$
is a sequence of stable constant mean curvature spheres in a given
asymptotically flat end $(\mathbb{R}^{3}\setminus B_{1}(0),g)$ and
that

\begin{equation}
\lim_{i\rightarrow\infty}r_{0}(\Sigma_{i})=\infty.
\end{equation}

And suppose that

\begin{equation}
\lim_{i\rightarrow\infty}r_{0}(\Sigma_{i})H(\Sigma_{i})=0.
\end{equation}

Then there is a subsequence of $\{\Sigma_{n}^{r_{0}}\}$ converges
to a 2-plane at distance $1$ from the origin. Moreover the convergence
is in $C^{2,\alpha}$ in any compact set of $\mathbb{R}^{3}$.

\end{lem}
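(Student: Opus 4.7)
The plan is to rescale everything by $r_0^{-1}$, show that both the full second fundamental form and the mean curvature of the rescaled surfaces tend to zero while the ambient metric converges to the Euclidean one, deduce that the limit is a totally geodesic surface in $\mathbb{R}^{3}$ (hence a plane), and finally use the normalization $r_0(\widehat{\Sigma}_i)=1$ to pin down its distance from the origin.

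I would first set up the scaling. Under $\widehat{\Sigma}_i = r_0^{-1}\Sigma_i$ the pulled-back ambient metric is $\widehat{g}_{ij}(y) = g_{ij}(r_0 y) = \delta_{ij}+h_{ij}(r_0 y)$, and one computes $\partial_y^{\ell}[h_{ij}(r_0 y)] = O(r_0^{-1}|y|^{-1-\ell})$ for $\ell=0,\dots,5$, so $\widehat{g}_i$ converges to the Euclidean metric in $C^{5}_{\mathrm{loc}}(\mathbb{R}^{3}\setminus\{0\})$. The second fundamental form rescales as $\widehat{A} = r_0 A$, so $\widehat{H}_i = r_0(\Sigma_i)H(\Sigma_i)\to 0$ by hypothesis, while the earlier curvature theorem gives $|\widehat{\mathring{A}}|^{2}(y) = r_0^{2}\,|\mathring{A}|^{2}(r_0 y) \leq C|y|^{-2}r_0^{-1}\to 0$ uniformly on compact subsets of $\mathbb{R}^{3}\setminus\{0\}$, with an analogous bound for $|\widehat{\nabla}\widehat{\mathring{A}}|^{2}$. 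Combining, $|\widehat{A}_i|$ and $|\widehat{\nabla}\widehat{A}_i|$ tend to $0$ on every such compact set.

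Next I would extract the limit locally. By definition of $r_0$ each $\widehat{\Sigma}_i$ contains a point $p_i$ with $|p_i|=1$, and since $p_i$ minimizes $|y|^{2}$ on $\widehat{\Sigma}_i$, the tangent plane $T_{p_i}\widehat{\Sigma}_i$ is orthogonal to $p_i$. After passing to a subsequence $p_i\to p_{\infty}$ with $|p_{\infty}|=1$, and letting $\Pi$ be the affine plane through $p_{\infty}$ perpendicular to $p_{\infty}$, the bounds $|\widehat{A}_i|,|\widehat{\nabla}\widehat{A}_i|\to 0$ together with $\widehat{g}_i\to\delta$ let the implicit function theorem (applied in a Euclidean tubular neighborhood of $\Pi$) represent $\widehat{\Sigma}_i$, for $i$ large, as a $C^{2,\alpha}$ graph over a disk in $\Pi$, whose height function tends to $0$ in $C^{2,\alpha}$.

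Finally I would globalize. On any compact $K\subset\mathbb{R}^{3}\setminus\{0\}$, the uniform decay of $|\widehat{A}_i|$ and $|\widehat{\nabla}\widehat{A}_i|$ on $K$ lets one extend the graph representation by covering $\Pi\cap K$ with overlapping disks and iterating the local argument; no second sheet, neck, or bubble can form, because the curvature bound holds on the \emph{entire} surface and Lemma \ref{Sobolev} supplies uniform local area bounds. For compact sets inside $\{|y|<1\}$ the claim is vacuous since $\widehat{\Sigma}_i$ stays outside the unit ball. Hence on every compact $K\subset\mathbb{R}^{3}$, $\widehat{\Sigma}_i\cap K\to\Pi\cap K$ in $C^{2,\alpha}$, and because $p_{\infty}\in\Pi$ with $|p_{\infty}|=1$ and $\Pi\perp p_{\infty}$, the plane $\Pi$ is at distance exactly $1$ from the origin. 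The principal obstacle is precisely this global graph-extension step: one must verify that the uniform curvature and metric convergence preclude any degeneration as one moves along $\Pi$, which is why the decay $|\widehat{\mathring{A}}|^{2}(y) = O(|y|^{-2}r_0^{-1})$ on \emph{all} of $\widehat{\Sigma}_i$ (not merely near $p_i$) is essential.
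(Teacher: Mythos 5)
The paper itself contains no proof of this lemma; like Lemma \ref{Blow down by H/2} it is imported from the standard blow-down analysis of Qing--Tian and the author's earlier paper, with L.~Simon's compactness theorem as the underlying tool. So your write-up has to stand on its own. Its first half does: the scaling computations are right, and you make the useful observation that since $|y|\geq 1$ on $\widehat{\Sigma}_i$ the curvature theorem gives $|\widehat{A}_i|^{2}\leq Cr_0^{-1}$ and $|\widehat{\nabla}\widehat{A}_i|^{2}\leq Cr_0^{-1/2}$ uniformly on the \emph{entire} rescaled surface, not just on compact sets. The remark that the innermost point $p_i$ has tangent plane orthogonal to the position vector, which forces the limit plane through $p_\infty$ to be tangent to $\partial B_1(0)$ and hence at distance exactly $1$, is also correct and is how the "distance $1$" claim should be pinned down.

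The gap is the step you yourself call the principal obstacle and then dispose of in one clause: excluding additional sheets. Uniformly small $|\widehat{A}_i|$ and $|\widehat{\nabla}\widehat{A}_i|$ together with local area-ratio bounds only force the limit over a compact set $K$ to be a finite union of parallel planes, possibly one plane with integer multiplicity $\geq 2$; nothing in your argument prevents a portion of $\widehat{\Sigma}_i$ at large intrinsic distance from $p_i$ from re-entering $K$ as a second nearly-flat sheet, and planes carry no Willmore energy, so the identity $\int_{\Sigma_i}H_e^{2}\,d\mu_e\to 16\pi$ does not by itself kill such sheets either. Moreover Lemma \ref{Sobolev} is the Michael--Simon inequality for functions on the surface; it is not the source of local area bounds --- those come from Simon's monotonicity formula applied with $\int_{\Sigma}H_e^{2}\,d\mu\leq C$. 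The standard repair uses Lemma \ref{Blow down by H/2}: because $\widetilde{\Sigma}_i\to S_1(a)$ in $C^{2,\alpha}$ away from the origin and $r_0H\to 0$ places the origin on $S_1(a)$, for small $\sigma$ the set $\Sigma_i\cap B_{\sigma H^{-1}}(0)$ has a single boundary circle and is therefore a single topological disk containing $\Sigma_i\cap B_{Kr_0}(0)$; one then combines this disk structure with the monotonicity formula and Simon's graphical decomposition to conclude that over any fixed compact set the rescaled surface is a single graph. Without some argument of this kind the statement "converges to \emph{a} 2-plane" --- which Section 5 really uses, e.g.\ in asserting $v^{m}\to b^{m}$ on $\hat{\Sigma}_n\cap B_K$ and in applying the divergence theorem to $B_K\setminus int(\hat{\Sigma}_n)$ --- is not established.
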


\begin{proof} $\Sigma_{n}^{r_{0}}\subset M^{r_{0}(\Sigma_{n})}$
and in ${\rm dist_{\delta_{\bar{i}\bar{j}}}}(\Sigma_{n}^{r_{0}},0)=1.$
Note that from Theorem \ref{Main curvature estimates}, $\AA^{r_{0}}(\Sigma_{n}^{r_{0}})\rightarrow0.$
And $H^{r_{0}}(\Sigma_{n}^{r_{0}})=r_{0}H\rightarrow0.$ So we have
$A^{r_{0}}(\Sigma_{n}^{r_{0}})\rightarrow0.$ Here $\AA^{r_{0}},H^{r_{0}},A^{r_{0}}$
represent the trace free part of the second fundamental form and mean
curvature and second fundamental form with respect to metric $g^{r_{0}}$.
So we can find a subsequence of $\Sigma_{n}^{r_{0}}$ which converges
to a 2-plane at distance $1$ from the origin. From the same reason
as the last lemma, the convergence is $C^{2,\alpha}$ in any compact
set of $\mathbb{R}^{3}.$

\end{proof}

We must understand the behavior of the surfaces $\Sigma_{i}$ in the
scales between $r_{0}(\Sigma_{i})$ and $H^{-1}(\Sigma_{i})$. We
consider the scale $r_{i}$ such that

\begin{eqnarray}
\lim_{i\rightarrow\infty}\frac{r_{0}(\Sigma_{i})}{r_{i}}=0 &  & \lim_{i\rightarrow\infty}r_{i}H(\Sigma_{i})=0
\end{eqnarray}

and blow down the surfaces

\begin{equation}
\Sigma_{i}^{r_{i}}=\Phi_{r_{i}}(\Sigma_{i})
\end{equation}

\begin{lem}\label{Blow down by middle size} Suppose that $\{\Sigma_{i}\}$
is a sequence of stable constant mean curvature surfaces in a given
asymptotically flat end $(\mathbb{R}^{3}\setminus B_{1}(0),g)$ and
that

\begin{equation}
\lim_{i\rightarrow\infty}r_{0}(\Sigma_{i})=\infty
\end{equation}

And suppose that $r_{i}$ satisfies

\begin{eqnarray}
\lim_{i\rightarrow\infty}\frac{r_{0}(\Sigma_{i})}{r_{i}}=0 &  & \lim_{i\rightarrow\infty}r_{i}H(\Sigma_{i})=0\label{r_i property}
\end{eqnarray}

Then there is a subsequence of $\{\Sigma_{i}^{r_{i}}\}$ converges
to a 2-plane at the origin in Gromov-Hausdorff distance. Moreover
the convergence is $C^{2,\alpha}$ in any compact subset away from
the origin.

\end{lem}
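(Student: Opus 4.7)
The plan is to apply the same blow-down machinery as in Lemmas \ref{Blow down by H/2} and \ref{Blow down by r_0^{-1}}, now at the intermediate scale $r_i$. The scale-invariance of $\int H_e^2\,d\mu_e$, together with the uniform bound $\int_{\Sigma_i}H_e^2\,d\mu_e=16\pi+O(r_0^{-1})$ from Section 2, gives $\int_{\overline\Sigma_i}\bar H_e^2\,d\bar\mu_e\to 16\pi$; L.~Simon's Theorem 3.1 then produces a subsequential Gromov--Hausdorff limit of $\{\overline\Sigma_i\}$.

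To identify this limit as a plane one uses the pointwise curvature estimate from Section 2, which rescales favorably. If $y\in\overline\Sigma_i$ corresponds to $x=r_iy\in\Sigma_i$ with $|x|=r_i|y|$, then
\[
|\bar{\AA}|^{2}(y)=r_{i}^{2}|\AA|^{2}(x)\leq\frac{Cr_{i}^{2}}{(r_{i}|y|)^{2}r_{0}(\Sigma_{i})}=\frac{C}{|y|^{2}\,r_{0}(\Sigma_{i})},
\]
so on every compact $K\Subset\mathbb{R}^{3}\setminus\{0\}$ we have $\sup_{K}|\bar{\AA}|\to 0$, and the analogous rescaling of the $|\nabla\AA|$ estimate gives $\sup_{K}|\bar\nabla\bar{\AA}|\to 0$. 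Combined with $\bar H_i=r_iH(\Sigma_i)\to 0$, the full rescaled second fundamental form tends to zero uniformly on compact subsets of $\mathbb{R}^{3}\setminus\{0\}$. One can also check, using \eqref{H-H_e} and $|h|=O(|x|^{-1})$, that the rescaled Euclidean mean curvature $\bar H_{e,i}$ similarly tends to zero. An Allard-type $C^{2,\alpha}$ regularity argument then upgrades the Gromov--Hausdorff convergence to $C^{2,\alpha}$ convergence on each such $K$, and the limit $\Sigma_\infty$ has vanishing second fundamental form in $\mathbb{R}^{3}\setminus\{0\}$, hence is a union of flat planes.

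To conclude that $\Sigma_\infty$ is a single plane passing through the origin, observe that $r_0(\overline\Sigma_i)=r_0(\Sigma_i)/r_i\to 0$ by the first scaling hypothesis and $r_1(\overline\Sigma_i)=r_1(\Sigma_i)/r_i\geq C/(r_iH(\Sigma_i))\to\infty$ by the second, so $\overline\Sigma_i$ has points tending to the origin and points escaping to infinity; any Gromov--Hausdorff limit must therefore contain $0$ and be unbounded. Connectedness of each $\overline\Sigma_i$, together with the $L^{2}$-curvature and area bounds, prevents the limit from splitting into several parallel planes. The main technical obstacle is precisely the behavior at the origin, where $C^{2,\alpha}$ convergence fails and one cannot a priori exclude multi-sheeted or bubbling phenomena near $0$; this is handled, as in the proof of Lemma \ref{Blow down by r_0^{-1}}, by invoking the global L.~Simon/Gromov--Hausdorff framework together with the uniform bound on $\int H_{e}^{2}\,d\mu_{e}$ to rule out concentration and to recover a single 2-plane through the origin as the limit.
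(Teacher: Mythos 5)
Your argument is correct and matches the approach the paper intends: this lemma is stated there without proof, as the third of the ``usual'' blow-downs obtained from L.~Simon's compactness theorem together with the curvature estimates of Section 2, which are exactly the ingredients you assemble. Your rescaled bounds on the (traceless) second fundamental form and its gradient, the scale invariance of $\int H_{e}^{2}\,d\mu_{e}$, and the observation that $r_{0}(\overline{\Sigma}_{i})\to 0$ while $r_{1}(\overline{\Sigma}_{i})\to\infty$ are precisely what identify the limit as a single 2-plane through the origin.
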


\begin{proof} 
\begin{align*}
\int_{B_{R}}|A^{r_{i}}|^{2}d\mu_{g^{r_{i}}} & =\int_{B_{r_{i}R}}|A|^{2}d\mu\\
 & =\int_{B_{r_{i}R}}|\AA|^{2}d\mu+\frac{1}{2}\int_{B_{r_{i}R}}H^{2}d\mu\\
 & \leq C(r_{0}^{-q}+H^{2}R^{2}r_{i}^{2}).
\end{align*}
From (\ref{r_i property}) we know for any fixed $R>0$, 
\[
\int_{B_{R}}|A^{r_{i}}|^{2}d\mu_{g^{r_{i}}}\rightarrow0
\]
as $i\rightarrow\infty.$

From Lemma 2.1 in \cite{L.Simon-Willmore}, we can get the first part
of the conclusion. And again from the pointwise estimate of second
fundamental and its derivative, we get the $C^{2,\alpha}$ convergence
away from the origin. 

\end{proof}

\section{Asymptotically analysis}

In this section, we carry out Qing and Tian's harmonic map technique
in $C_{1,\tau}^{4}$-AF manifolds. In the end of this section we will
derive a strengthened estimate on the second fundamental form, Lemma
\ref{The new estimate for the second fundamental form}. First let
us revise the properties of harmonic function on a column. Suppose
$N=[0,3L]\times S^{1}$ for some constant $L$ to be fixed later.
Choose $(t,\theta)$ as the coordinates, where $0\leq t\leq3L,0\leq\theta\leq2\pi.$
Denote
\[
\|u\|_{1,i}^{2}=\int_{[(i-1)L,iL]\times S^{1}}(|u|^{2}+|\tilde{\nabla}u|^{2})dtd\theta,
\]
where $(t,\theta)$ is the standard column coordinate and $\tilde{\nabla}$
is the gradient operator with respect to the metric $dt^{2}+d\theta^{2}$.

\begin{lem}\label{three element lemma} Suppose $u\in W^{1,2}(N,R^{k})$
satisfies

\begin{equation}
\tilde{\Delta}u+A\cdot\tilde{\nabla}u+B\cdot u=h
\end{equation}
in $N$, where$N=[0,3L]\times S^{1}$ and $\tilde{\Delta}=\frac{\partial^{2}}{\partial t^{2}}+\frac{\partial^{2}}{\partial\theta^{2}},\tilde{\nabla}=(\frac{\partial}{\partial t},\frac{\partial}{\partial\theta})$
. And suppose that $L$ is given and large. Then there exists a positive
number $\delta_{0}$ such that if

\begin{equation}
\|h\|_{L^{2}(N)}\leq\delta_{0}\max_{1\leq i\leq3}\|u\|_{1,i}
\end{equation}
and

\begin{eqnarray}
\|A\|_{L^{\infty}(N)}\leq\delta_{0} &  & \|B\|_{L^{\infty}(N)}\leq\delta_{0}
\end{eqnarray}
then,

(a) $\|u\|_{1,3}\leq e^{-\frac{1}{2}L}\|u\|_{1,2}$ implies $\|u\|_{1,2}<e^{-\frac{1}{2}L}\|u\|_{1,1}$

(b) $\|u\|_{1,1}\leq e^{-\frac{1}{2}L}\|u\|_{1,2}$ implies $\|u\|_{1,2}<e^{-\frac{1}{2}L}\|u\|_{1,3}$

(c) If both $\int_{L\times S^{1}}ud\theta$ and $\int_{2L\times S^{1}}ud\theta\leq\delta_{0}\max_{1\leq i\leq3}\|u\|_{1,i}$,
then either $\|u\|_{1,2}<e^{-\frac{1}{2}L}\|u\|_{1,1}$ or $\|u\|_{1,2}<e^{-\frac{1}{2}L}\|u\|_{1,3}$\end{lem}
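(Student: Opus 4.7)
The plan is to prove all three statements by a single compactness--contradiction argument that reduces the problem to the homogeneous Laplace equation on the flat cylinder, and then to analyze the harmonic limit by separation of variables in $\theta$.

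First I would argue by contradiction. For (a), assume the implication fails: then there exist sequences $u_k, A_k, B_k, h_k$ on $N$ and $\delta_k\searrow 0$ verifying the hypotheses with $\delta_0$ replaced by $\delta_k$, together with $\|u_k\|_{1,3}\le e^{-L/2}\|u_k\|_{1,2}$ but $\|u_k\|_{1,2}\ge e^{-L/2}\|u_k\|_{1,1}$. Normalize so that $\max_i\|u_k\|_{1,i}=1$. Interior $W^{2,2}$ elliptic estimates applied to
\begin{equation*}
\Delta u_k = h_k - A_k\cdot\nabla u_k - B_k\cdot u_k
\end{equation*}
on slightly enlarged subcylinders give uniform $W^{2,2}_{\mathrm{loc}}$ bounds; after extraction, $u_k\to u_\infty$ strongly in $W^{1,2}(N)$ (by Rellich), and $\Delta u_\infty = 0$ since $A_k, B_k\to 0$ in $L^\infty$ and $h_k\to 0$ in $L^2$. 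Both norm inequalities and the normalization pass to the limit, so $u_\infty$ is a nontrivial harmonic function on $N$.

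Second, I would Fourier-expand in $\theta$. Writing $u_\infty(t,\theta)=\sum_{n\in\mathbb Z} c_n(t)e^{in\theta}$, the equation decouples into $c_0''=0$, giving $c_0(t)=\alpha+\beta t$, and $c_n''-n^2 c_n=0$ for $n\ne 0$, giving $c_n(t)=\gamma_n e^{nt}+\delta_n e^{-nt}$. Parseval gives
\begin{equation*}
\|u_\infty\|_{1,j}^2 = 2\pi\sum_{n\in\mathbb Z}\int_{(j-1)L}^{jL}\bigl(|c_n(t)|^2(1+n^2)+|c_n'(t)|^2\bigr)dt,
\end{equation*}
and direct computation shows that a pure growing mode $e^{nt}$ has ratio $\|\cdot\|_{1,j+1}^2/\|\cdot\|_{1,j}^2 = e^{2nL}$ ($n\ge 1$), the corresponding decaying mode $e^{-nt}$ has ratio $e^{-2nL}$, and the $n=0$ mode has ratio bounded above and below by constants independent of $L$.

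Third, each case is closed. In (a), the limiting hypothesis $\|u_\infty\|_{1,3}\le e^{-L/2}\|u_\infty\|_{1,2}$ is, for $L$ chosen sufficiently large, incompatible with any nontrivial presence of a growing mode (whose ratio alone is at least $e^L$) or of the $n=0$ mode (whose ratio is bounded below independently of $L$). Hence only decaying modes with $|n|\ge 1$ remain, and their explicit ratios yield $\|u_\infty\|_{1,2}\le e^{-L}\|u_\infty\|_{1,1}$, strictly sharper than $e^{-L/2}\|u_\infty\|_{1,1}$; this contradicts the limit of $\|u_k\|_{1,2}\ge e^{-L/2}\|u_k\|_{1,1}$. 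Statement (b) follows by applying the reflection $t\mapsto 3L-t$ which preserves the equation and reverses the ordering of the three slabs. For (c), the extra averaging assumption gives $c_0(L)=c_0(2L)=0$ in the limit, hence $c_0\equiv 0$; splitting each higher mode into its growing and decaying parts, whichever aggregate dominates produces decay at rate $e^{-L}$ toward the opposite end, which delivers one of the two alternatives with room to spare. The main obstacle is the careful treatment of the $n=0$ mode (which does not decay exponentially): in (a) and (b) it must be eliminated using the slack between the sharp Fourier decay $e^{-L}$ and the hypothesis $e^{-L/2}$, after $L$ is chosen large and $\delta_0$ is chosen small depending on $L$ so that the perturbations $A,B,h$ are absorbed; in (c) the averaging hypothesis removes it directly. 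The mixed cross term $\gamma_n\delta_n$ in combined modes also requires a short calculation to check that it is subdominant for $L$ large.
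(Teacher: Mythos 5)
The paper does not prove this lemma at all: it is quoted as a known tool, imported from Qing--Tian \cite{QT} (and ultimately from the neck analysis of harmonic maps as in \cite{Lin and Wang}), so there is no in-paper proof to compare against. Your compactness-plus-Fourier-mode argument is precisely the standard proof of this ``three circles'' lemma, and the overall strategy is correct: contradiction sequences with $\delta_k\to0$, normalization, passage to a nontrivial harmonic limit on the flat cylinder, separation of variables, and exploitation of the gap between the true decay rate $e^{-L}$ of the $|n|\ge1$ modes and the threshold $e^{-L/2}$ in the hypotheses, with the averaging condition in (c) killing the affine $n=0$ mode.

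Two points in your write-up are overstated and should be repaired, though neither is fatal. First, Rellich does not give strong $W^{1,2}(N)$ convergence up to the boundary circles $t=0$ and $t=3L$; interior elliptic estimates only control the middle slab and compact subsets. The fix is to use strong convergence of $\|u_k\|_{1,2}$ (the middle slab is interior) together with weak lower semicontinuity of $\|\cdot\|_{1,1}$ and $\|\cdot\|_{1,3}$; one checks that both the hypothesis and the negated conclusion pass to the limit with exactly these one-sided tools, and that nontriviality of $u_\infty$ always survives through the middle slab (since in every case $\|u_k\|_{1,2}\ge e^{-L/2}$ after normalization). Second, the limiting inequality $\|u_\infty\|_{1,3}\le e^{-L/2}\|u_\infty\|_{1,2}$ does not make the growing and $n=0$ modes vanish identically --- a tiny admixture of $e^{t}e^{i\theta}$ is perfectly compatible with it --- it only forces their contribution to the slab-$2$ norm to be $O(e^{-L})$ of the total. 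The conclusion $\|u_\infty\|_{1,2}\le Ce^{-L}\|u_\infty\|_{1,1}<e^{-L/2}\|u_\infty\|_{1,1}$ still follows for $L$ large, because the decaying modes then carry $(1-O(e^{-L}))$ of the slab-$2$ norm and grow by a factor $e^{2L}$ (in the squared norm) into slab $1$; but the argument must be run quantitatively rather than by elimination of modes. The cross-term issue you flag is genuinely harmless: for $c_n=\gamma e^{nt}+\delta e^{-nt}$ the integrand $(1+n^2)|c_n|^2+|c_n'|^2$ equals $(1+2n^2)(|\gamma|^2e^{2nt}+|\delta|^2e^{-2nt})+2\mathrm{Re}(\gamma\bar\delta)$, so the cross term is constant in $t$ and bounded by $\varepsilon(L)$ times the geometric mean of the growing and decaying contributions, with $\varepsilon(L)\to0$.
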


\begin{proof}

\end{proof}Suppose that $u\in W^{1,2}(\Sigma)$ and $u$ is harmonic,
we can deduce that if $u$ satisfies (a)(b)(c')with

(c') If both $\int_{L\times S^{1}}ud\theta$ and $\int_{2L\times S^{1}}ud\theta=0$,
then either $\|u\|_{1,2}<e^{-\frac{1}{2}L}\|u\|_{1,1}$ or $\|u\|_{1,2}<e^{-\frac{1}{2}L}\|u\|_{1,3}$

A harmonic function $u$ can be written as:

\[
u=a_{0}+b_{0}t+\sum_{n=1}^{\infty}\{e^{nt}(a_{n}\cos n\theta+b_{n}\sin n\theta)+e^{-nt}(a_{-n}\cos n\theta+b_{-n}\sin n\theta)\}
\]

Then it follows that:

\begin{eqnarray*}
\|u\|_{1,i}^{2} & = & 2\pi((a_{0}^{2}+b_{0}^{2})L+a_{0}b_{0}L^{2}(2i-1)+\frac{1}{3}b_{0}^{2}L^{3}(3i^{2}-3i+1))\\
 &  & +\frac{\pi}{2}\sum_{n=1}^{\infty}\{\frac{e^{2nL-1}}{n}(e^{2(i-1)nL}(a_{n}^{2}+b_{n}^{2})+e^{-2niL}(a_{-n}^{2}+b_{-n}^{2}))+4L(a_{n}a_{-n}+b_{n}b_{-n})\}\\
 &  & +\pi\sum_{n=1}^{\infty}\{\frac{e^{2nL-1}}{n}(e^{2(i-1)nL}(n^{2}a_{n}^{2}+n^{2}b_{n}^{2})+e^{-2niL}(n^{2}a_{-n}^{2}+n^{2}b_{-n}^{2}))\\
 &  & +4L(n^{2}a_{n}a_{-n}+n^{2}b_{n}b_{-n})\}
\end{eqnarray*}
 $i=1,2,3$

If $L$ is fixed and sufficiently large, then we have
\[
\|u\|_{1,2}^{2}<\frac{1}{2}(e^{L}\|u\|_{1,3}^{2}+e^{-L}\|u\|_{1,1}^{2})
\]

which implies (a). We get (b) in the same way. For (c'), we have $a_{0}=b_{0}=0$
then we have

\[
\|u\|_{1,2}^{2}<\frac{1}{2}e^{-L}(\|u\|_{1,3}^{2}+\|u\|_{1,1}^{2})
\]
which implies either $\|u\|_{1,2}<e^{-\frac{1}{2}L}\|u\|_{1,1}$ or
$\|u\|_{1,2}<e^{-\frac{1}{2}L}\|u\|_{1,3}$.

The second step is to pass limits. If the proposition were false,
then one would find a sequence of $\delta_{k}\rightarrow0$ and a
sequence of solution $u_{k}$ with $\|h_{k}\|_{L^{2}}\leq\delta_{k}\max_{1\leq i\leq3}\|u_{k}\|_{1,i}$,
$\|A_{k}\|_{\infty}\leq\delta_{k}$ and $\|B_{k}\|_{\infty}\leq\delta_{k}$
solves:

\begin{eqnarray*}
\tilde{\Delta}u_{k}+A_{k}\cdot\tilde{\nabla}u_{k}+B_{k}\cdot u_{k} & = & h_{k}.
\end{eqnarray*}

And $u_{k}$ violate (a)(b) or (c). We may assume $\max_{1\leq i\leq3}\|u_{k}\|_{1,i}=1$
otherwise we can normalize $u_{k}$. So we know $\|u_{k}\|_{1,2}>C>0$
for a uniform $C$ because $u_{k}$ violate (a)(b) or (c). Then we
know there is a subsequence that converges to some harmonic function
$u\in W^{1,2}(\Sigma)$ weakly. From the interior $W^{2,p}$ estimate
we know the convergence is strongly $W^{1,2}$ in $I_{2}$, which
implies that $u$ is not trivially zero.

And because $u_{i}\rightharpoonup u$ weakly in $W^{1,2}(\Sigma)$
sense. So $u_{i}\rightharpoonup u$ in $W^{1,2}(I_{1})$ and $W^{1,2}(I_{3})$
sense, then we have:

\begin{align*}
\liminf_{i\rightarrow\infty}\|u_{i}\|_{1,1} & \geq\|u\|_{1,1}\\
\liminf_{i\rightarrow\infty}\|u_{i}\|_{1,3} & \geq\|u\|_{1,3}
\end{align*}
\[
\]
and

\[
\lim_{i\rightarrow\infty}\|u_{i}\|_{1,2}=\|u\|_{1,2}
\]

then $u_{i}$ converges to some non-trivial harmonic function $u$
which violates one of (a)(b) or (c'). So we proved the lemma. 

Given a surface $\Sigma$ in $R^{3}$. Recall 
\[
\Delta_{e}v_{e}+|\nabla_{e}v_{e}|^{2}v_{e}=\nabla_{e}H_{e},
\]
where $v_{e}$ is the Gauss map $\Sigma\rightarrow S^{2}$. For the
constant mean curvature spheres in the asymptotically flat end $(R^{3}\setminus B_{1}(0),g)$,
we have

\begin{lem}
\[
|\nabla_{e}H_{e}|(x)\leq C|x|^{-3}
\]
\end{lem}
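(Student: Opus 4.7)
The plan is to differentiate the explicit identity (H-H_e) along $\Sigma$. Since $H$ is constant on $\Sigma$, we have $\nabla_{e} H_{e} = -\nabla_{\Sigma}(H-H_{e})$, so it is enough to estimate the tangential derivative of the right-hand side of (H-H_e) term by term.

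First I would collect the available decay. The AF hypothesis yields $|h|=O(|x|^{-1})$, $|\overline{\nabla}h|=O(|x|^{-2})$, and $|\overline{\nabla}^{2}h|=O(|x|^{-3})$. From the curvature estimate of the previous section, $|\AA|=O(|x|^{-1})$ and $|\nabla\AA|=O(|x|^{-2})$; together with the diameter estimate $H\leq C r_{1}^{-1}\leq C|x|^{-1}$ on $\Sigma$, this gives $|A|=O(|x|^{-1})$ and, because $H$ is constant, $|\nabla A|=|\nabla\AA|=O(|x|^{-2})$. The Weingarten formula gives $|\overline{\nabla}v|=O(|A|)=O(|x|^{-1})$, and the same holds for the tangential projector $f^{ij}$.

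Applying $\nabla_{\Sigma}$ term by term to (H-H_e) with these bounds, each summand is of order $|x|^{-3}$. For example, $\nabla(f^{ik}h_{kl}f^{lj}A_{ij})$ expands into pieces bounded by $|A|^{2}|h|+|A||\overline{\nabla}h|+|h||\nabla A|=O(|x|^{-3})$; $\nabla(Hv^{i}v^{j}h_{ij})$ gives $H|A||h|+H|\overline{\nabla}h|=O(|x|^{-3})$; and both of the $\nabla(f^{ij}v^{l}\overline{\nabla}_{*}h_{**})$ terms give $|A||\overline{\nabla}h|+|\overline{\nabla}^{2}h|=O(|x|^{-3})$. The schematic error terms $C|h||\overline{\nabla}h|$ and $C|h|^{2}|A|$, once interpreted as genuine expressions in those quantities, contribute at worst $O(|x|^{-4})$, which is strictly better. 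Summing gives $|\nabla_{e}H_{e}|(x)\leq C|x|^{-3}$.

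The only real obstacle is bookkeeping: the formula (H-H_e) is schematic and must be viewed as a smooth function of $(h,\overline{\nabla}h,A)$ with coefficients bounded by $O(1)$, so that its tangential derivative is linear in $(\overline{\nabla}h,\overline{\nabla}^{2}h,\nabla A)$ with no higher-order derivatives appearing. Once that is verified by inspecting the derivation of (H-H_e), the desired bound follows by the elementary term-by-term comparison above.
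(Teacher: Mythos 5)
Your proposal is correct and follows essentially the same route as the paper: since $H$ is constant, one bounds $|\nabla H_{e}|=|\nabla(H-H_{e})|$ by differentiating the explicit expression (\ref{H-H_e}) term by term, using $|h|=O(|x|^{-1})$, $|\overline{\nabla}h|=O(|x|^{-2})$, $|\overline{\nabla}^{2}h|=O(|x|^{-3})$, $|A|=O(|x|^{-1})$ and $|\nabla\AA|=O(|x|^{-2})$, which is exactly the paper's displayed estimate. The only cosmetic difference is that the paper first invokes the uniform equivalence of $g$ and the Euclidean metric to replace $\nabla_{e}$ by $\nabla$, a point you handle implicitly.
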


\begin{proof}Because of the uniform equivalence of the metric $g$
and the euclidean metric, we can prove:
\[
|\nabla H_{e}|(x)\leq C|x|^{-3}
\]
instead. From the expression of $H-H_{e}$ (\ref{H-H_e}), we have 

\begin{eqnarray}
 &  & |\nabla H_{e}|\leq|\overline{\nabla}h_{ij}||A|+|h_{ij}||A|^{2}+|h_{ij}||\nabla\AA_{ij}|+H|A||h_{ij}|+H|\overline{\nabla}h_{ij}|\nonumber \\
 &  & +|A||\overline{\nabla}h_{ij}|+|\overline{\nabla}^{2}h|\nonumber \\
 &  & \leq C|x|^{-3}
\end{eqnarray}

\end{proof}

Suppose $\Sigma$ is a stable CMC sphere in the asymptotically flat
end which separates $K'$ from infinity. We are interested in the
case when $r_{0}$ is much smaller than $H^{-1}.$

Set

\[
A_{r_{1},r_{2}}=\{x\in\Sigma:r_{1}\leq|x|\leq r_{2}\}
\]
and $A_{r_{1},r_{2}}^{0}$ stands for the standard annulus in $\mathbb{R}^{2}$.
Consider the behavior of the normal vector $v$ on $A_{Kr_{0}(\Sigma),sH^{-1}(\Sigma)}$
of $\Sigma$ where $K$ will be fixed large and $s$ will be fixed
small. The lemma below gives us a good coordinate on the surface.

\begin{lem} Suppose $\Sigma$ is a stable constant mean curvature
sphere in a given asymptotically flat end $(M\backslash K',g)$ which
separates $K'$ from infinity. Then, for any $\varepsilon>0$ and
$L$ fixed, there are $M$,$s$ and $K$ such that, if $r_{0}\geq M$
and $Kr_{0}(\Sigma)<r<sH^{-1}(\Sigma)$, then $(r^{-1}A_{r,e^{L}r},r^{-2}g_{e})$
may be represented as $(A_{1,e^{L}}^{0},\overline{g})$ and

\begin{equation}
\|\overline{g}-|dx|^{2}\|_{C^{1,\alpha}(A_{1,e^{L}}^{0})}\leq\varepsilon.
\end{equation}

In other words, in the cylindrical coordinates $[\log r,L+\log r]\times S^{1}$

\begin{equation}
\|\overline{g}_{c}-(dt^{2}+d\theta^{2})\|_{C^{1,\alpha}(S^{1}\times[\log r,L+\log r])}\leq\varepsilon
\end{equation}
\end{lem}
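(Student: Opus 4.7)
The plan is to argue by contradiction, reducing to the intermediate blow-down lemma proved immediately above. Suppose the conclusion fails: then for some fixed $\varepsilon_0 > 0$ and some fixed large $L$, for every $n \in \mathbb{N}$ one can find a CMC surface $\Sigma_n$ with $r_0(\Sigma_n) \geq n$ together with a radius $r_n$ satisfying $n\, r_0(\Sigma_n) < r_n < n^{-1} H(\Sigma_n)^{-1}$, for which the rescaled induced metric on $r_n^{-1} A_{r_n, e^L r_n}$ violates the stated $C^1$ bound.

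With this choice the three hypotheses of the previous lemma are met automatically: $r_0(\Sigma_n) \to \infty$, $r_0(\Sigma_n)/r_n \to 0$, and $r_n H(\Sigma_n) \to 0$. Therefore, after extracting a subsequence, $\overline{\Sigma}_n = r_n^{-1}\Sigma_n$ converges in Gromov--Hausdorff distance to a 2-plane $P$ through the origin, and the convergence is $C^{2,\alpha}$ on every compact subset of $\mathbb{R}^3 \setminus \{0\}$. At the same time, under $y = x/r_n$, the rescaled ambient metric reads $(r_n^{-2} g)(y) = \delta_{ij} + h_{ij}(r_n y)$, and the decay $h_{ij} = O_5(|x|^{-1})$ forces $r_n^{-2}g \to \delta$ in $C^5_{\mathrm{loc}}$. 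Restricting to the fixed compact annular region $\{1 \leq |y| \leq e^L\}$ away from the origin, the induced metrics on $\overline{\Sigma}_n$ converge in $C^{2,\alpha}$ to the induced flat metric on the planar annulus $P \cap \{1 \leq |y| \leq e^L\}$.

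Because $P$ passes through the origin, this planar annulus is isometric to the standard Euclidean annulus $A^0_{1, e^L}$. Hence for all sufficiently large $n$ the rescaled surface piece $r_n^{-1} A_{r_n, e^L r_n}$ can be written as a $C^{2,\alpha}$-small normal graph over $P \cap \{1 \leq |y| \leq e^L\}$, and the induced metric, pulled back to $A^0_{1, e^L}$ by the graph parametrization, is $C^1$-close to $|dx|^2$. Passing to cylindrical coordinates via the conformal change $|x|^{-2} g_e$ with $t = \log|x|$, $\theta \in S^1$, turns the flat annular metric into the product metric $dt^2 + d\theta^2$ on $S^1 \times [\log r_n, L + \log r_n]$, and the smoothness of this coordinate change preserves the $C^1$ estimate. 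This contradicts the failure hypothesis and yields the lemma.

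The principal technical point to verify is that the $C^{2,\alpha}$ convergence away from the origin provided by the intermediate blow-down lemma actually produces uniform $C^1$ control of the pulled-back induced metric and of the subsequent conformal passage to cylindrical coordinates on the fixed compact annulus $\{1 \leq |y| \leq e^L\}$. This is routine once the ambient perturbation $h_{ij} = O_5(|x|^{-1})$ is combined with the graph parametrization over $P$, since all the relevant geometric quantities depend on at most one derivative of the metric and of the parametrization.
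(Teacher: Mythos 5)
The paper states this lemma without proof (it is carried over from Qing--Tian and the author's earlier paper, whose estimates are simply quoted in Sections 2--3), so there is no in-paper argument to compare against. Your contradiction/compactness argument is correct and is exactly the standard route: the negation with $M=K=n$, $s=1/n$ produces a sequence satisfying the hypotheses of the intermediate blow-down lemma, whose $C^{2,\alpha}$ convergence to a plane through the origin on the fixed annulus $\{1\le |y|\le e^{L}\}$ gives the required flat representation and hence the contradiction; the passage to cylindrical coordinates is the elementary conformal identity $|x|^{-2}|dx|^{2}=dt^{2}+d\theta^{2}$. The only points you gloss over are routine but worth a word: the single-sheetedness of the graph over the limit plane (which comes from the Willmore energy $\int_{\Sigma}H_{e}^{2}\,d\mu=16\pi+o(1)$ via L.~Simon's theorem, so no second sheet can appear in the annular range) and the small boundary adjustment needed so that the graph parametrization matches the exact region $\{1\le|y|\le e^{L}\}$.
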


\begin{proof}By contradiction, we assume this lemma were false. So
from Lemma \ref{Blow down by r_0^{-1}}, for some $\varepsilon_{0}>0$,
there exists a sequence $\Sigma_{n}$ with $r_{0}(\Sigma_{n})\rightarrow\infty$
and $\tilde{l}_{n}\rightarrow\infty$ such that 
\[
((Kr_{0}e^{\tilde{l}_{n}L})^{-1}A_{Kr_{0}e^{\tilde{l}_{n}L},Kr_{0}e^{(\tilde{l}_{n}+1)L}},(Kr_{0}e^{\tilde{l}_{n}L})^{-2}g_{e})
\]
is not within $\varepsilon_{0}$ neighborhood of $(A_{1,e^{L}}^{0},\bar{g})$
in $C^{1,\alpha}$ sense.

From Lemma \ref{Blow down by H/2}, we know: if we fix a small $s$, 

\[
\frac{Kr_{0}e^{\tilde{l}_{n}L}}{sH^{-1}(\Sigma_{n})}\rightarrow0.
\]

So if we let $r_{n}=Kr_{0}e^{\tilde{l}_{n}L}$, then 
\[
\lim_{n\rightarrow\infty}\frac{r_{n}}{Kr_{0}}=\infty,\lim_{n\rightarrow\infty}\frac{r_{n}}{sH^{-1}}=0.
\]
However if we blowdown the surface by $r_{n}^{-1},$ we get a contradiction
with Lemma \ref{Blow down by middle size}. So we have proved the
lemma.

\end{proof}

Now consider the cylindrical coordinates $(t,\theta)$ on $(S^{1}\times[\log Kr_{0},\log sH^{-1}])$,
then the tension field

\begin{equation}
|\tau(v)|=r^{2}|\nabla_{e}H_{e}|\leq Cr^{-1}
\end{equation}
for $t\in[\log Kr_{0},\log sH^{-1}]$. Thus, 
\begin{equation}
\int_{S^{1}\times[t,t+L]}|\tau(v)|^{2}dtd\theta\leq Cr^{-2}
\end{equation}

Let $I_{i}$ stand for $S^{1}\times[\log Kr_{0}+(i-1)L,\log Kr_{0}+iL]$,
and $N_{i}$ stand for $I_{i-1}\cup I_{i}\cup I_{i+1}$. On $\Sigma_{n}$
we assume $\log(sH^{-1})-\log(Kr_{0})=l_{n}L$. 

Now we prove the energy decay. Suppose $f_{ij}$ is the metric of
the surface $\Sigma_{n}$ , i.e. the restriction of $g_{ij}$ to $\Sigma_{n}$$.$
For sufficiently large $K$, we consider $(\Sigma_{n}\cap B_{Kr_{0}}^{c}(0),f_{ij}|x|^{-4}(Kr_{0})^{2})$
which is close to the unit ball of $\mathbb{R}^{2}$. The Gauss map
$v_{n}:\Sigma_{n}\rightarrow S^{2}$ induces a map $\hat{v}_{n}:B_{1}(0)\rightarrow S^{2}$.
Note that the energy of $\hat{v}$ will concentrate at the origin
of $B_{1}(0)$ and the tension field $\hat{\tau}$ of the map $\hat{v}$
satisfies
\[
|\hat{\tau}|\leq C|x|^{-3}|x|^{4}(Kr_{0})^{-2}=C|x|(Kr_{0})^{-2}=\frac{C(Kr_{0})^{-1}}{\sqrt{4s^{2}e^{-2l_{n}L}+\hat{r}^{2}}}
\]
 where $\hat{r}$ denotes the radius function of the unit ball. First
we have:

\begin{lem}For every $i\in[3,l_{n}-2]$, there exists a geodesic
$\gamma_{i}$ such that:

\[
\int_{I_{i}}|\tilde{\nabla}(v_{n}-\gamma_{i})|^{2}dtd\theta\leq C(e^{-iL}+e^{-(l_{n}-i)L})(s^{2}+r_{0}^{-1}).
\]

\end{lem}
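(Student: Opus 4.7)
The plan is to apply Lemma \ref{three element lemma} to the difference $u = v_n - v_0$ on overlapping triples $N_i = I_{i-1} \cup I_i \cup I_{i+1}$ and iterate the resulting alternative to propagate exponential energy decay inward from both ends of the cylinder $S^1 \times [\log Kr_0, \log sH^{-1}]$. Since the weak limit $v_0$ is a constant unit vector (the normal to the limiting plane in Lemma \ref{Blow down by r_0^{-1}}), the geodesic $\gamma$ in the statement may be taken to be the constant geodesic equal to $v_0$, and $|\hat\nabla v_n| = |\hat\nabla u|$.

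The hypotheses of Lemma \ref{three element lemma} for $u$ are essentially in place from the preceding discussion: the first-order coefficient $v_n \hat\nabla v_n$ has $L^\infty$-norm at most $C(s + r_0^{-1/2}) \leq \delta_0$ for $s$ small and $r_0$ large, and the source satisfies $|\tau| \leq Ce^{-t}$, yielding
\[
\|\tau\|_{L^2(N_i)} \leq C(Kr_0)^{-1} e^{-(i-2)L},
\]
which is dominated by $\delta_0 \max_{j} \|u\|_{1,j}$ whenever the local energy is not already below the decay target. To verify the small-average hypothesis (c), I would invoke the no-neck Lemma \ref{no neck lemma}: for any prescribed $\varepsilon$ and all $s$ small, $n$ large, one has $|v_n - v_0| \leq \varepsilon$ pointwise on $\Sigma_n \cap B_{sH^{-1}}(0)$, hence $\bigl|\int_{\{t\}\times S^1} u\,d\theta\bigr| \leq 2\pi\varepsilon$, which is small relative to $\max_j \|u\|_{1,j}$. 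A Poincar\'e inequality on each window converts control of circle means plus gradient energy into control of $\|u\|_{1,i}$ itself.

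With (c) available at every interior index, Lemma \ref{three element lemma} gives at each $i$ the alternative $\|u\|_{1,i} < e^{-L/2}\|u\|_{1,i-1}$ or $\|u\|_{1,i} < e^{-L/2}\|u\|_{1,i+1}$. A standard monotonicity argument (choose the smallest index $i^*$ at which the right-hand alternative first becomes operative, then iterate (a) to its left and (b) to its right) produces
\[
\|u\|_{1,i}^2 \leq C\bigl(e^{-iL} + e^{-(l_n - i)L}\bigr)\bigl(\|u\|_{1,1}^2 + \|u\|_{1,l_n}^2\bigr).
\]
The remaining task is to bound the two boundary energies. Near the inner end, the curvature estimate $|\AA|^2 \leq C|x|^{-2} r_0^{-1}$ integrates on the annulus $A_{Kr_0, e^L Kr_0}$ to give $\|u\|_{1,1}^2 \leq Cr_0^{-1}$. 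Near the outer end, Lemma \ref{Blow down by H/2} shows that after rescaling by $H/2$ the surface is $C^{2,\alpha}$-close to the unit round sphere, so on a cylindrical strip of width $L$ at Euclidean radius $\sim sH^{-1}$ the Gauss map varies by $O(s)$, producing $\|u\|_{1,l_n}^2 \leq Cs^2$. Combining these with the previous display yields the desired bound.

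The main obstacle is establishing the small-average condition (c) uniformly across middle windows; this is precisely where the $L^p$ bubbling theorem of Lin--Wang invoked above becomes essential, since the tension field $\tilde\tau$ only lies in $L^p$ for $p \in (1,2)$ and not in $L^2$, so classical $L^2$ harmonic-map bubbling is unavailable and the no-neck statement has to be extracted from the $L^p$ version. Once uniform $C^0$ closeness of $v_n$ to $v_0$ is in hand, the three-element iteration and the two boundary estimates are quantitative routine.
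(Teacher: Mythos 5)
Your proposal follows essentially the same route as the paper, which itself offers no written proof beyond the three preparatory facts (smallness of the coefficient $v_{n}\hat{\nabla}v_{n}$, the bound $|\tau|\leq Ce^{-t}$, and the smallness of the circle averages of $v_{n}-v_{0}$ supplied by the no-neck lemma) followed by the remark that Lemma \ref{three element lemma} then yields the decay; your iteration of alternatives (a)--(c) and the two boundary-window estimates ($Cs^{2}$ at the outer end from the $H/2$ blow-down, $Cr_{0}^{-1}$ at the inner end from the curvature estimate) are exactly the standard Qing--Tian completion of that sketch. The reconstruction is correct and, if anything, more explicit than the paper about where the $L^{p}$ bubbling input and the ``either the energy is already below target or the hypotheses hold'' caveat enter.
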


\begin{proof}First we have 
\begin{align*}
[v_{n}]_{C^{\alpha}(I_{i})} & \leq C\|\tilde{\nabla}v\|_{L^{\infty}}\\
 & \leq CKr_{0}e^{iL}(\|\AA\|_{L^{\infty}}+H)\\
 & \leq C(r_{0}^{-\frac{1}{2}}+s).
\end{align*}
from the estimate of the second fundamental form. So when $r_{0}$
is sufficient large and $s$ is sufficiently small, $[v_{n}]_{C^{\alpha}(I_{i})}$
is sufficiently small.

Note that $S^{2}$ is smooth compact manifold and ${\rm osc_{I_{i}}}v_{n}$
is very small. So for each $I_{i}$ we can choose two points $P_{i}$
and $Q_{i}$ on $S^{2}$ such that
\begin{align*}
|P_{i}-\frac{1}{2\pi}\int_{(i-1)L\times S^{1}}v_{n}d\theta| & \leq C\max_{(i-1)L\times S^{1}}|v_{n}-P_{i}|^{2}\\
|Q_{i}-\frac{1}{2\pi}\int_{iL\times S^{1}}v_{n}d\theta| & \leq C\max_{iL\times S^{1}}|v_{n}-Q_{i}|^{2}.
\end{align*}
Also we know easily $dist_{S^{2}}(P_{i},Q_{i})\leq C(r_{0}^{-\frac{1}{2}}+s).$
So we can choose one unique geodesic $\gamma_{i}$ which joins $P_{i}$
and $ $$Q_{i}$. 

If we regard $\gamma_{i}$ as a harmonic map from $ $$[(i-1)L,iL]\times S^{1}$
to $S^{2}$, we can extend $\gamma_{i}$ to $[\log(Kr_{0}),\log(sH^{-1})]\times S^{1}$.
$u_{n,i}=v_{n}-\gamma_{i}$ satisfies 
\[
\tilde{\Delta}u_{n,i}+A_{n,i}\cdot\tilde{\nabla}u_{n,i}+B_{n,i}\cdot u_{n,i}=\tau_{n}
\]
where 
\begin{align}
|A_{n,i}| & \leq C(|\tilde{\nabla}v_{n}|+|\tilde{\nabla}\gamma_{i}|)\leq\delta_{0},\nonumber \\
|B_{n,i}| & \leq C\min\{|\tilde{\nabla}v_{n}|^{2},|\tilde{\nabla}\gamma_{i}|^{2}\}\leq\delta_{0}.\label{eq:|A| and |B| inequality}
\end{align}

To use Lemma \ref{three element lemma} (C), we have to verify that
\[
\|\tau_{n}\|_{L^{2}(N_{i})}\leq\delta_{0}\max_{i-1\leq k\leq i+1}\|u_{n,i}\|_{1,k}
\]
 and 
\begin{align*}
\int_{(i-1)L\times S^{1}}u_{n,i}d\theta & \leq\delta_{0}\max_{i-1\leq k\leq i+1}\|u_{n,i}\|_{1,k}\\
\int_{iL\times S^{1}}u_{n,i}d\theta & \leq\delta_{0}\max_{i-1\leq k\leq i+1}\|u_{n,i}\|_{1,k}
\end{align*}
where $N_{i}=I_{i-1}\cup I_{i}\cup I_{i+1}.$ 

However we have 
\[
\int_{(i-1)L\times S^{1}}u_{n,i}d\theta\leq|2\pi P_{i}-\int_{(i-1)L\times S^{1}}v_{n}d\theta|\leq C\max_{(i-1)L\times S^{1}}|v_{n}-P_{i}|^{2}.
\]
 By interior estimate and trace embedding we have
\[
\max_{(i-1)\times S^{1}}|v_{n}-P_{i}|\leq C(\max_{i-1\leq k\leq i+1}\|u_{n,i}\|_{1,k}+\|\tau_{n}\|_{L^{2}(N_{i})}).
\]
So if we have 
\[
\|\tau_{n}\|_{L^{2}(N_{i})}\leq\delta_{0}\max_{i-1\leq k\leq i+1}\|u_{n,i}\|_{1,k}
\]
we will have 
\begin{align*}
\int_{(i-1)L\times S^{1}}u_{n,i}d\theta & \leq C\max_{(i-1)L\times S^{1}}|v_{n}-P_{i}|\max_{i-1\leq k\leq i+1}\|u_{n,i}\|_{1,k}\\
 & \leq C\delta_{0}\max_{i-1\leq k\leq i+1}\|u_{n,i}\|_{1,k}.
\end{align*}
and in the same way we will get
\[
\int_{iL\times S^{1}}u_{n,i}d\theta\leq\delta_{0}\max_{i-1\leq k\leq i+1}\|u_{n,i}\|_{1,k}.
\]
So if we cannot use Lemma \ref{three element lemma} (C), the only
reason is 
\[
\max_{i-1\leq k\leq i+1}\|u_{n,i}\|_{1,k}\leq\delta_{0}\|\tau_{n}\|_{L^{2}(N_{i})},
\]
which will imply
\begin{align*}
\int_{I_{i}}|\tilde{\nabla}(v_{n}-\gamma_{i})|^{2}dtd\theta & \leq Ce^{-2t}\leq Ce^{-iL}r_{0}^{-1}\\
 & \leq C(e^{-iL}+e^{-(l_{n}-i)L})(s^{2}+r_{0}^{-1}).
\end{align*}
If (C) can be used, so we use it on $N_{i}$ for $u_{n,i}$. We have
\[
\|u_{n,i}\|_{1,i}<e^{-\frac{1}{2}L}\|u\|_{1,i-1}
\]
or 
\[
\|u_{n,i}\|_{1,i}<e^{-\frac{1}{2}L}\|u\|_{1,i+1}.
\]
Without loss of generality, we assume the first one happens. Then
we can push this relationship left and continue to use Lemma \ref{three element lemma}
(a) because (\ref{eq:|A| and |B| inequality}) always holds. If for
some $j\geq2,$ the theorem can be used until $N_{j+1},$ but not
until $N_{j}$, then we have 
\begin{align*}
\|u_{n,i}\|_{1,i} & <e^{-\frac{1}{2}(i-j)L}\|u\|_{1,j}\leq Ce^{-\frac{1}{2}(i-j)L}e^{-\frac{1}{2}jL}r_{0}^{-\frac{1}{2}}\\
 & \leq Ce^{-\frac{1}{2}iL}r_{0}^{-\frac{1}{2}}.
\end{align*}
If we can use (a) until $N_{2}$, then we have:

\begin{align*}
e^{\frac{L}{2}}\|u_{n,i}\|_{1,2} & \leq\|u_{n,i}\|_{1,1}=(\int_{I_{1}}u_{n,i}^{2}dtd\theta)^{\frac{1}{2}}+(\int_{I_{1}}|\tilde{\nabla}u_{n,i}|^{2}dtd\theta)^{\frac{1}{2}}\\
 & \leq(\int_{I_{2}}u_{n,i}^{2}dtd\theta)^{\frac{1}{2}}+(\int_{I_{1}}(u(t,\theta)-u(t+L,\theta))^{2}dtd\theta)^{\frac{1}{2}}+(\int_{I_{1}}|\tilde{\nabla}u_{n,i}|^{2}dtd\theta)^{\frac{1}{2}}.
\end{align*}
So we have
\begin{align*}
(e^{\frac{L}{2}}-1)\|u_{n,i}\|_{1,2} & \leq(\int_{I_{1}}(\int_{0}^{L}|\frac{\partial u_{n,i}}{\partial t}(t+s,\theta)|ds)^{2}dtd\theta)^{\frac{1}{2}}+(\int_{I_{1}}|\tilde{\nabla}u_{n,i}|^{2}dtd\theta)^{\frac{1}{2}}\\
 & \leq\int_{0}^{L}\int_{I_{1}}|\frac{\partial u_{n,i}}{\partial t}(t+s,\theta)|{}^{2}dtd\theta)^{\frac{1}{2}}ds+(\int_{I_{1}}|\tilde{\nabla}u_{n,i}|^{2}dtd\theta)^{\frac{1}{2}}\\
 & \leq C(\int_{I_{1}\cup I_{2}}|\tilde{\nabla}u_{n,i}|^{2}dtd\theta)^{\frac{1}{2}}\\
 & \leq C((\int_{I_{1}\cup I_{2}}|\tilde{\nabla}v_{n}|^{2}dtd\theta)^{\frac{1}{2}}+(\int_{I_{1}\cup I_{2}}|\tilde{\nabla}\gamma_{i}|^{2}dtd\theta)^{\frac{1}{2}})\\
 & \leq C(r_{0}^{-\frac{1}{2}}+s).
\end{align*}
So we have 
\[
\|u_{n,i}\|_{1,i}\leq Ce^{-\frac{i-2}{2}L}\|u_{n,i}\|_{1,2}\leq Ce^{-\frac{i}{2}L}(r_{0}^{-\frac{1}{2}}+s).
\]
If $ $$\|u_{n,i}\|_{1,i}<e^{-\frac{1}{2}L}\|u\|_{1,i+1}$ happens,
similarly, we have 
\[
\|u_{n,i}\|_{1,i}\leq Ce^{-\frac{l_{n}-i}{2}L}(r_{0}^{-\frac{1}{2}}+s).
\]
At last we get 
\[
\|u_{n,i}\|_{1,i}\leq C(e^{-\frac{i}{2}L}+e^{-\frac{l_{n}-i}{2}L})(r_{0}^{-\frac{1}{2}}+s).
\]

\end{proof}

From this lemma we have 
\[
\int_{[(i-1)L,iL]\times S^{1}}|\partial_{\theta}v_{n}|^{2}dtd\theta\leq C(e^{-iL}+e^{-(l_{n}-i)L})(r_{0}^{-1}+s^{2})
\]

To get the energy decay we use the Pohozaev equality. See for example
\cite{Lin-Wang-Harmonic-map-flow}, Lemma 2.4.

\begin{lem}Let $v$ be a solution to 
\[
\Delta_{e}v+|\nabla_{e}v|^{2}v=\hat{\tau}.
\]
And $v$ is defined on the disk $B_{r_{0}}$. Then we have 
\[
\int_{\partial B_{r_{0}}}(|\partial_{r}v|^{2}-r^{-2}|\partial_{\theta}v|^{2})d\mu(\partial B_{r_{0}})=\frac{2}{r_{0}}\int_{B_{r_{0}}}\hat{\tau}\cdot(x\nabla v)dx.
\]

\end{lem}

This lemma imply for $t\in[(i-1)L,iL]$ 
\begin{align*}
\int_{[(i-1)L,iL]\times S^{1}}|\partial_{t}v_{n}|^{2}dtd\theta & \leq\int_{[(i-1)L,iL]\times S^{1}}|\partial_{\theta}v_{n}|^{2}dtd\theta+C\int_{B_{e^{-iL}}}|\hat{\tau}|(\hat{x}\hat{\nabla}v_{n})d\hat{x}\\
 & \leq\int_{[(i-1)L,iL]\times S^{1}}|\partial_{\theta}v_{n}|^{2}dtd\theta\\
 & +C(\int_{B_{e^{-iL}}}|\hat{\tau}|^{2}|\hat{x}|^{2}d\hat{x})^{\frac{1}{2}}(\int_{B_{e^{-iL}}}|\hat{\nabla}v_{n}|^{2}d\hat{x})^{\frac{1}{2}}\\
 & \leq\int_{[(i-1)L,iL]\times S^{1}}|\partial_{\theta}v_{n}|^{2}dtd\theta+Ce^{-iL}(r_{0}^{-\frac{1}{2}}+s)\\
 & \leq C(e^{-iL}+e^{-(l_{n}-i)L})(r_{0}^{-\frac{1}{2}}+s).
\end{align*}
At last we get energy decay

\begin{lem}\label{estimate on the normal vector in the neck} Suppose
that $\{\Sigma_{n}\}$ is a sequence of stable constant mean curvature
spheres in a given asymptotically flat end $(\mathbb{R}^{3}\setminus B_{1}(0),g)$
which separate $K'$ from infinity and that

\begin{equation}
\lim_{i\rightarrow\infty}r_{0}(\Sigma_{n})=\infty
\end{equation}

And suppose that

\begin{equation}
\lim_{n\rightarrow\infty}r_{0}(\Sigma_{n})H(\Sigma_{n})=0
\end{equation}

Then for any $K>0$, $s>0$, there exists a uniform $C>0$ and $n_{0}$
such that, when $n\geq n_{0}$,

\begin{equation}
\max_{I_{i}}|\tilde{\nabla}v_{e}|\leq C(e^{-\frac{i}{2}L}+e^{-\frac{(l_{n}-i)}{2}L})(s+r_{0}^{-\frac{1}{2}})
\end{equation}
where

\begin{equation}
I_{i}=S^{1}\times[\log(Kr_{0}(\Sigma_{n}))+(i-1)L,\log(Kr_{0}(\Sigma_{n}))+iL]
\end{equation}
and

\begin{eqnarray}
i\in[0,l_{n}] &  & \log(Kr_{0}(\Sigma_{n}))+l_{n}L=\log(sH^{-1}(\Sigma_{n}))
\end{eqnarray}

\end{lem}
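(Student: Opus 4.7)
The plan is to upgrade the $L^2$ decay (\ref{22-1}) to a pointwise $L^\infty$ decay by combining the Bochner identity for harmonic maps with a Moser-type mean value inequality, using in an essential way that $|\hat\nabla v_n|$ is \emph{already} known to be uniformly small on the neck.

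First, I would set up the differential inequality. Since $v_n-v_0$ satisfies
\[
\hat\Delta(v_n-v_0) + v_n\hat\nabla v_n\cdot\hat\nabla(v_n-v_0) = \tau,\qquad |\tau|\leq Ce^{-t},
\]
a standard Bochner-type computation for maps into $S^2$ yields, up to lower-order terms arising from the deviation of the cylindrical metric from $dt^2+d\theta^2$ (which by the good coordinate lemma is $C^1$-small),
\[
\hat\Delta|\hat\nabla v_n|^2 \geq -C|\hat\nabla v_n|^4 - C|\hat\nabla v_n|^2 - Ce^{-2t}.
\]
Because the preceding lemmas give the uniform bound $|\hat\nabla v_n|\leq C(s+r_0^{-1/2})$, the quartic term is absorbed and $|\hat\nabla v_n|^2$ becomes a subsolution of a linear equation with a small zero-order coefficient and an inhomogeneity decaying like $e^{-2t}$.

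Next, I would apply the Moser iteration / mean value inequality on the nested cylinders $I_i\subset N_i=I_{i-1}\cup I_i\cup I_{i+1}$, using that the geometry of each $N_i$ is uniformly comparable to a unit piece of the flat cylinder. This yields
\[
\max_{I_i}|\hat\nabla v_n|^2 \leq C\int_{N_i}|\hat\nabla v_n|^2\,dt\,d\theta + C\int_{N_i}e^{-2t}\,dt\,d\theta.
\]
The first integral is controlled by (\ref{22-1}) applied on $I_{i-1}$, $I_i$ and $I_{i+1}$, producing the factor $C(e^{-iL}+e^{-(l_n-i)L})(s^2+r_0^{-1})$. On $N_i$ we have $t\geq \log(Kr_0)+(i-2)L$, so the second integral is of order $(Kr_0)^{-2}e^{-2iL}$, which is dominated by $r_0^{-1}e^{-iL}$ once $r_0$ is large. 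Taking square roots gives the claimed estimate on the interior range $i\in[3,l_n-2]$; for the boundary indices the exponential factors are comparable to $1$, and the estimate reduces to the already established uniform bound $|\hat\nabla v_n|\leq C(s+r_0^{-1/2})$, which is built into the definition of the constants $K,s$.

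The main obstacle I foresee is verifying the Bochner inequality cleanly in the \emph{perturbed} cylindrical metric: one must check that the $C^1$-small deviation of $\overline{g}_c$ from $dt^2+d\theta^2$, together with the presence of the tension field $\tau$, only contributes terms that can be absorbed into the subsolution structure. Once the perturbation is controlled (which is exactly what the good coordinate lemma was designed to do) and the smallness of $|\hat\nabla v_n|$ is used to kill the quartic term, the Moser iteration is entirely standard and transfers the exponential decay from $L^2$ to $L^\infty$ with only a loss of a multiplicative constant, yielding the stated bound.
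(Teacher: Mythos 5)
Your argument is correct and is essentially the intended one: the paper states this lemma without proof (it is carried over from \cite{QT} and \cite{Shiguang Ma}), and the expected derivation is exactly the small-energy interior gradient estimate you describe, upgrading the $L^{2}$ decay (\ref{22-1}) to a pointwise bound via a Bochner-type inequality and the local maximum principle on the overlapping cylinders $N_{i}$, with the already-known uniform smallness of $|\hat{\nabla}v_{n}|$ absorbing the quartic term. The one point to make explicit is the term $\langle\hat{\nabla}v,\hat{\nabla}\tau\rangle$ arising in the Bochner identity, which should be handled in the weak formulation by integrating by parts back onto $\tau$ (so that only the bound $|\tau|\leq Ce^{-t}$ is needed, not a bound on $\hat{\nabla}\tau$); with that, your treatment of the inhomogeneous term and of the boundary indices $i\notin[3,l_{n}-2]$ is exactly right.
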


\begin{lem}\label{no-neck lemma} For any $\varepsilon>0$, there
is some $\delta>0$ and $M>0$ such that if $0<s<\delta$ and $n>M$
we have 
\[
{\rm OSC}{}_{\Sigma_{n}\cap B_{sH^{-1}}(0)}v_{n}\leq\varepsilon.
\]
\end{lem}

\begin{proof}Suppose $\log sH^{-1}-\log Kr_{0}=l_{n}L,$
\begin{align*}
{\rm OSC}{}_{\Sigma_{n}\cap B_{sH^{-1}}(0)}v_{n} & \leq\sum_{i=1}^{l_{n}}{\rm OSC}{}_{\Sigma_{n}\cap(B_{Kr_{0}e^{iL}}\backslash B_{Kr_{0}e^{(i-1)L}})}v_{n}+{\rm OSC}{}_{\Sigma_{n}\cap B_{Kr_{0}}}v_{n}\\
 & \leq C\sum_{i=1}^{l_{n}}(e^{-\frac{iL}{2}}+e^{-\frac{(l_{n}-i)L}{2}})(r_{0}^{-\frac{1}{2}}+s)^{\frac{1}{2}}+{\rm OSC}{}_{\Sigma_{n}\cap B_{Kr_{0}}}v_{n}\\
 & \leq C(r_{0}^{-\frac{1}{2}}+s)^{\frac{1}{2}}+{\rm OSC}{}_{\Sigma_{n}\cap B_{Kr_{0}}}v_{n}
\end{align*}
We choose $\delta$ small and $n$ large, so we have $r_{0}^{-\frac{1}{2}}+s$
is sufficiently small and from Lemma \ref{Blow down by r_0^{-1}},
for fixed $K$ and sufficiently large $n$, ${\rm OSC}{}_{\Sigma_{n}\cap B_{Kr_{0}}}v_{n}$
is also small. So we have proved this lemma

\end{proof}

From the lemma above we know the two limit planes we got in Lemma
\ref{Blow down by r_0^{-1}} and Lemma \ref{Blow down by middle size}
have the same normal vector.

\begin{lem}\label{The new estimate for the second fundamental form}
If $\Sigma$ is a stable CMC sphere in the asymptotically flat end,
then the second fundamental form of $\Sigma$ has the following estimate:
For a point $x\in(B_{Kr_{0}e^{(i+1)L}}\setminus B_{Kr_{0}e^{iL}})\cap\Sigma$,
\[
|A_{e}(x)|\leq C|x|^{-1}(e^{-\frac{i}{2}L}+e^{-\frac{(l_{n}-i)}{2}L})(s+r_{0}^{-\frac{1}{2}})^{\frac{1}{2}}
\]
where $sH^{-1}=Kr_{0}\cdot e^{l_{n}L}$.

\end{lem}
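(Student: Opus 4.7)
The plan is to read the pointwise bound on $|A|$ directly off the energy-decay estimate of Lemma~\ref{estimate on the normal vector in the neck} via the classical identity $|A_{e}|^{2}=|\nabla_{e}v|^{2}$, valid pointwise for any oriented surface in Euclidean $\mathbb{R}^{3}$, where $v:\Sigma\to S^{2}$ is the Gauss map and $\nabla_{e}$ is the intrinsic gradient with respect to the induced Euclidean metric on $\Sigma$. Because $g_{ij}=\delta_{ij}+h_{ij}$ with $h=O_{5}(|x|^{-1})$, an expansion of the Weingarten map analogous to the one underlying the formula for $H-H_{e}$ gives $|A_{g}-A_{e}|\leq C|h||A_{e}|+C|\overline{\nabla}h|=O(|x|^{-1}|A_{e}|)+O(|x|^{-2})$. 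The first error is absorbed into $|A_{e}|$ itself, and since $|x|\geq Kr_{0}$ the second is bounded by $C|x|^{-1}(Kr_{0})^{-1}\leq C|x|^{-1}r_{0}^{-1/2}$, which fits into the right-hand side of the claim. Thus it suffices to establish the bound for $|A_{e}|$.

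The heart of the argument is the conformal rescaling to the cylindrical picture introduced in Section~4. At a point $x\in\Sigma$ with $|x|=r$, set $t=\log r$ and work in $(t,\theta)$-coordinates. The rescaled metric $r^{-2}g_{e}|_{\Sigma}$ is the $\overline{g}_{c}$ of the good-coordinate lemma, $C^{1}$-close to the flat cylinder $dt^{2}+d\theta^{2}$ on each $I_{j}$. The conformal change of the gradient gives $|\hat{\nabla}v|_{\overline{g}_{c}}=r|\nabla_{e}v|$, hence
\[
|A_{e}|(x)=|\nabla_{e}v|(x)=|x|^{-1}|\hat{\nabla}v|(x),
\]
up to a factor close to $1$ arising from the $C^{1}$-smallness. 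For $x\in B_{Kr_{0}e^{(i+1)L}}\setminus B_{Kr_{0}e^{iL}}$, the coordinate $t=\log|x|$ lies in the cylindrical annulus $I_{i+1}$ of the earlier indexing, so Lemma~\ref{estimate on the normal vector in the neck} applied there yields $|\hat{\nabla}v|(x)\leq C(e^{-iL/2}+e^{-(l_{n}-i)L/2})(s+r_{0}^{-1/2})$ after absorbing the one-step index shift into the universal constant. Combining these steps gives the claimed bound on $|A_{g}|(x)$.

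The main obstacle, and essentially the only real content beyond invoking Lemma~\ref{estimate on the normal vector in the neck}, is bookkeeping: confirming that the conformal rescaling does not introduce uncontrolled multiplicative constants (which follows from the $C^{1}$-smallness provided by the good coordinates of Section~4), verifying that the lower-order corrections $|A_{g}-A_{e}|$ fit into the right-hand side throughout the entire neck $Kr_{0}\leq|x|\leq sH^{-1}$, and matching the annular indexing in the conclusion with the $I_{j}$-indexing in the previous lemma. None of these is serious, so the lemma is really a direct translation of the Gauss-map gradient decay already proven.
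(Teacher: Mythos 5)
Your proposal is correct and follows essentially the same route as the paper: the paper's proof is the one-line chain $|A(x)|\leq C|\nabla v(x)|\leq C|x|^{-1}\sup_{I_{i}}|\hat{\nabla}v|$ followed by Lemma~\ref{estimate on the normal vector in the neck}, which is exactly your Gauss-map/conformal-rescaling argument. Your additional care about the discrepancy between $A_{g}$ and $A_{e}$ and about the index bookkeeping is detail the paper leaves implicit, not a different method.
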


\begin{proof}Note that
\[
|A_{e}(x)|\leq C|\nabla_{e}v_{e}(x)|\leq C|x|^{-1}\sup_{I_{i}}|\hat{\nabla}v_{e}|\leq C|x|^{-1}(e^{-\frac{i}{2}L}+e^{-\frac{(l_{n}-i)}{2}L})(s+r_{0}^{-\frac{1}{2}})^{\frac{1}{2}}
\]

\end{proof}

\begin{cor}\label{Choose a mean value of the normal vector} Assume
the same condition as Proposition\ref{estimate on the normal vector in the neck}.
Choose some $p_{n}\in I_{\frac{l_{n}}{2}}$. Then

\begin{equation}
\sup_{x\in I_{i}}|v_{n}(x)-v_{n}(p_{n})|\leq C(e^{-\frac{1}{2}iL}+e^{-\frac{1}{4}l_{n}L})(s+r_{0}^{-\frac{1}{2}})^{\frac{1}{2}}
\end{equation}
for $i\in[0,\frac{1}{2}l_{n}]$

\begin{equation}
\sup_{x\in I_{i}}|v_{n}(x)-v_{n}(p_{n})|\leq C(e^{-\frac{1}{4}l_{n}L}+e^{-\frac{1}{2}(l_{n}-i)L})(s+r_{0}^{-\frac{1}{2}})^{\frac{1}{2}}
\end{equation}
for $i\in[\frac{1}{2}l_{n},l_{n}]$\end{cor}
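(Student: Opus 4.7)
The plan is to integrate the pointwise gradient bound from Lemma \ref{estimate on the normal vector in the neck} along a path in the cylindrical coordinates $(t,\theta)$ connecting $p_n$ to an arbitrary point $x\in I_i$.

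Fix $i\in[0,l_n/2]$ and $x\in I_i$. The slabs $I_j$ for $j=i,i+1,\dots,l_n/2$ cover a connected piece of the cylinder containing both $x$ and $p_n$, and by the good-coordinate lemma the cylindrical metric $\bar g_c$ is $C^1$-close to $dt^2+d\theta^2$, so distances in $\bar g_c$ are comparable to distances in the product metric. I would choose a piecewise smooth path $\gamma$ from $x$ to $p_n$ inside this union whose length in each $I_j$ is at most $CL$ (each $I_j$ has $t$-width exactly $L$). Applying the fundamental theorem of calculus to $v$ along $\gamma$ gives
\[
|v(x)-v(p_n)| \;\le\; \int_\gamma |\hat\nabla v|\,ds \;\le\; CL\sum_{j=i}^{l_n/2}\max_{I_j}|\hat\nabla v|.
\]

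Plugging in the bound $\max_{I_j}|\hat\nabla v|\le C(e^{-jL/2}+e^{-(l_n-j)L/2})(s+r_0^{-1/2})$ from Lemma \ref{estimate on the normal vector in the neck} yields
\[
|v(x)-v(p_n)| \;\le\; C(s+r_0^{-1/2})\sum_{j=i}^{l_n/2}\bigl(e^{-jL/2}+e^{-(l_n-j)L/2}\bigr).
\]
The first geometric series, with ratio $e^{-L/2}$ and leading term $e^{-iL/2}$, is bounded by $Ce^{-iL/2}$. For the second, substitute $k=l_n-j$; as $j$ runs from $i$ to $l_n/2$, $k$ runs over $[l_n/2,\,l_n-i]$, so the sum is bounded by $Ce^{-l_n L/4}$. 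Combining gives the claimed estimate on $I_i$ for $i\in[0,l_n/2]$. The range $i\in[l_n/2,l_n]$ is handled symmetrically: sum instead over $j=l_n/2,\dots,i$, which extracts $e^{-(l_n-i)L/2}$ from one series and again $e^{-l_n L/4}$ from the other.

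There is no real obstacle in this argument; the corollary is essentially an integration of the already-established pointwise gradient bound. The only mildly subtle points are using the approximate flatness of $\bar g_c$ to keep the length of the connecting path proportional to the number of slabs it crosses, and carefully reindexing the two decaying geometric series so that both the nearby-slab contribution $e^{-iL/2}$ and the uniform ``far-slab'' contribution $e^{-l_n L/4}$ emerge with clean constants absorbed into $C$.
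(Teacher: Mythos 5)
Your proof is correct and is exactly the argument the paper intends: the corollary is stated there without proof as an immediate consequence of Lemma \ref{estimate on the normal vector in the neck}, obtained by integrating the slab-wise gradient bound along a path from $p_{n}\in I_{l_{n}/2}$ to a point of $I_{i}$ and summing the two geometric series. Your bookkeeping of the two series (extracting $e^{-iL/2}$ from the near sum and $e^{-l_{n}L/4}$ from the far sum, with the fixed $L$ absorbed into $C$) is precisely what is needed.
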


\begin{proof}We only prove the first one. 
\begin{align*}
\sup_{I_{i}}|v_{n}(x)-v_{n}(p_{n})| & \leq\sum_{k=i}^{l_{n}/2}{\rm OSC}{}_{I_{k}}v_{n}\\
 & \leq C\sum_{k=i}^{l_{n}/2}(e^{-\frac{k}{2}L}+e^{-\frac{(l_{n}-k)}{2}L})(s+r_{0}^{-\frac{1}{2}})^{\frac{1}{2}}\\
 & \leq\frac{C}{1-e^{-\frac{L}{2}}}(e^{-\frac{i}{2}L}+e^{-\frac{l_{n}}{4}L})(s+r_{0}^{-\frac{1}{2}})^{\frac{1}{2}}.
\end{align*}
The second one follows similarly. 

\end{proof}

\section{Mass integral}

In this section we prove Theorem \ref{thm1}. To detect the non 0
mass we use the integral below, for some constant vector $b$ to be
fixed,

\[
\int_{\Sigma}(H-H_{e})<v_{e},b>_{e}d\mu_{e}.
\]
First we have
\begin{align*}
\int_{\Sigma}H & <v_{e},b>_{e}d\mu_{e}=H\int_{int(\Sigma)}{\rm div}(b)d\mu_{e}=0.
\end{align*}
$\int_{\Sigma}H_{e}<v_{e},b>_{e}d\mu_{e}$ is the variation of the
area of $\Sigma$ in the direction of $b,$ which is also $0.$ So
we have 
\begin{equation}
\int_{\Sigma}(H-H_{e})<v_{e},b>_{e}d\mu_{e}=0.\label{trivial identity}
\end{equation}

Should Theorem \ref{thm1} be false, we could find a sequence of stable
CMC spheres $\{\Sigma_{n}\}$ which separate $K'$ from infinity,
with
\[
\lim_{n\rightarrow\infty}r_{0}(\Sigma_{n})=+\infty
\]
\[
\lim_{n\rightarrow\infty}\int_{\Sigma_{n}}(H-H_{e})<v_{e},b>_{e}d\mu_{e}\neq0
\]
which is a contradiction with (\ref{trivial identity}). 

To do this, first suppose Theorem \ref{thm1} were false, then we
could find a subsequence of stable CMC spheres $\{\Sigma_{n}\}$ which
separate $K'$ from infinity such that 
\[
r_{1}(\Sigma_{n})/r_{0}(\Sigma_{n})>n
\]
 and 
\[
\lim_{n\rightarrow\infty}r_{0}(\Sigma_{n})=+\infty.
\]
From Lemma \ref{The relationship of H and r1} we know 
\[
\lim_{n\rightarrow\infty}r_{0}(\Sigma_{n})H(\Sigma_{n})=0.
\]

From Lemma \ref{The relationship of H and r1} and Lemma \ref{Blow down by H/2},
$\Sigma_{n}^{\frac{2}{H}}$ converge to some sphere $S_{1}(a)$, with
center to be unit vector $a$. Then the origin lies on $S_{1}(a).$ 

Choose 
\[
b=-a,
\]
 and we consider the integral: 
\begin{eqnarray}
 &  & \int_{\Sigma}(H-H_{e})<v_{e},b>_{e}d\mu_{e}=\int_{\Sigma}(-f^{ik}h_{kl}f^{lj}A_{ij}+\frac{1}{2}Hv^{i}v^{j}h_{ij}-f^{ij}v^{l}\overline{\nabla}_{i}h_{jl}\nonumber \\
 &  & +\frac{1}{2}f^{ij}v^{l}\overline{\nabla}_{l}h_{ij}\pm C|h||\overline{\nabla}h|\pm C|h|^{2}|A|)<v_{e},b>_{e}d\mu_{e}+O(r_{0}^{-1}),
\end{eqnarray}
here $i,j$ ran from $1$ to 3, and $f_{ij}$ is the restriction of
$g_{ij}$ on $\Sigma$. 

\begin{rmk} Note that $A_{ij}-(A_{e})_{ij}=O(|x|^{-2})$ and $v_{e}-v=O(|x|^{-1})$.
From 
\[
\int_{\Sigma}|x|^{-3}d\mu_{e}=O(r_{0}^{-1}).
\]
 We may identify $A_{ij}$ with $(A_{e})_{ij}$ and $v$ with $v_{e}$
in the integral where this is needed.

\end{rmk}

From
\begin{eqnarray}
 &  & \int_{\Sigma_{n}}-f^{ij}v^{l}(\overline{\nabla}_{i}h_{jl})v^{m}b^{m}d\mu_{e}\nonumber \\
 & = & \frac{1}{2}\int_{\Sigma_{n}}(f^{ij}h_{jk}f^{kl}A_{li}-Hv^{j}v^{l}h_{jl})v^{m}b^{m}d\mu_{e}+\frac{1}{2}\int_{\Sigma_{n}}f^{ij}v^{l}h_{jl}A_{ik}f^{km}b^{m}d\mu_{e}\nonumber \\
 &  & -\frac{1}{2}\int_{\Sigma_{n}}f^{ij}v^{l}(\overline{\nabla}_{i}h_{jl})v^{m}b^{m}d\mu_{e},
\end{eqnarray}
we change the integral into:

\begin{eqnarray}
\int_{\Sigma_{n}}(H-H_{e})<v_{e},b>_{e}d\mu_{e}=\int_{\Sigma_{n}}(-\frac{1}{2}f^{ik}h_{kl}f^{lj}A_{ij}v^{m}b^{m}+\frac{1}{2}f^{ij}v^{l}h_{jl}A_{ik}f^{km}b^{m}\nonumber \\
-\frac{1}{2}f^{ij}v^{l}\overline{\nabla}_{i}h_{jl}v^{m}b^{m}+\frac{1}{2}f^{ij}v^{l}\overline{\nabla}_{l}h_{ij}v^{m}b^{m})d\mu_{e}+O(r_{0}^{-1})\nonumber \\
\end{eqnarray}

At $x\in\Sigma_{n}$ we choose a frame $\{e_{1},e_{2},v\},$ where
$e_{\alpha},\alpha=1,2$ (the Greek indices runs from $1,2$) form
the orthonormal basis of $T_{x}\Sigma_{n}$. We have $f_{\alpha\beta}=f^{\alpha\beta}=\delta_{\alpha\beta}$
and for any tensor $p_{ij}$, we have 
\[
p_{\alpha\alpha}+p(v,v)=g^{ij}p_{ij}=p_{ii}+O(r^{-1})|p|.
\]
 Sometimes we denote $v$ direction by $3',$ so $f_{\alpha,3'}=f^{\alpha,3'}=f_{3'3'}=f^{3'3'}=0$
and $A_{3'3'}=A_{\alpha,3'}=0$ and $v^{3'}=1,v^{\alpha}=0$.

So we know 
\begin{align*}
-\frac{1}{2}f^{ik}h_{kl}f^{lj}A_{ij}v^{m}b^{m} & =-\frac{1}{2}f^{\alpha\gamma}f^{\beta\eta}h_{\gamma\eta}A_{\alpha\beta}v^{m}b^{m}-\frac{1}{2}f^{3'3'}f^{3'3'}h_{3'3'}A_{3'3'}\\
 & =-\frac{1}{2}f^{\alpha\gamma}f^{\beta\eta}h_{\gamma\eta}A_{\alpha\beta}v^{m}b^{m}\\
 & =-\frac{1}{2}h_{\alpha\beta}A_{\alpha\beta}v^{m}b^{m}=-\frac{1}{2}h_{\alpha\beta}A_{\alpha\beta}b^{3'}\\
\frac{1}{2}f^{ij}v^{l}h_{jl}A_{ik}f^{km}b^{m} & =\frac{1}{2}f^{\beta\gamma}v^{3'}h_{\gamma3'}A_{\beta\eta}f^{\eta\alpha}b^{\alpha}=\frac{1}{2}h_{\beta3'}A_{\alpha\beta}b^{\alpha}\\
-\frac{1}{2}f^{ij}v^{l}\overline{\nabla}_{i}h_{jl}v^{m}b^{m} & =-\frac{1}{2}f^{\alpha\beta}v^{3'}\overline{\nabla}_{\alpha}h_{\beta3'}v^{3'}b^{3'}=-\frac{1}{2}\overline{\nabla}_{\alpha}h_{\alpha3'}b^{3'}\\
\frac{1}{2}f^{ij}v^{l}\overline{\nabla}_{l}h_{ij}v^{m}b^{m} & =\frac{1}{2}f^{\alpha\beta}v^{3'}\overline{\nabla}_{3'}h_{\alpha\beta}v^{3'}b^{3'}=\frac{1}{2}\overline{\nabla}_{3'}h_{\alpha\alpha}b^{3'}
\end{align*}
So 
\begin{align}
 & \int_{\Sigma_{n}}(H-H_{e})<v_{e},b>_{e}d\mu_{e}\nonumber \\
= & \int_{\Sigma_{n}}(-\frac{1}{2}h_{\alpha\beta}A_{\alpha\beta}b^{3'}+\frac{1}{2}h_{\beta3'}A_{\alpha\beta}b^{\alpha}\nonumber \\
 & -\frac{1}{2}\overline{\nabla}_{\alpha}h_{\alpha3'}b^{3'}+\frac{1}{2}\overline{\nabla}_{3'}h_{\alpha\alpha}b^{3'})d\mu_{e}.\label{Simplified expression}
\end{align}
We are going to prove that for any $\varepsilon>0$, there exists
$N>0$ such that when $n>N$
\[
|\int_{\Sigma_{n}}(H-H_{e})<v_{e},b>_{e}d\mu_{e}-\int_{\Sigma_{n}}(-\frac{1}{2}v^{l}\partial_{i}h_{il}+\frac{1}{2}v^{l}\partial_{l}h_{ii})d\mu_{e}|\leq\varepsilon.
\]
The outline of the proof is, for any $\varepsilon>0$, we can choose
$s>0$, sufficiently small and $K>0$ sufficiently large, and $N=N(s,K)$
such that the above relationship holds. 

First for any $s>0$ and $K>0$, we can find $N$ when $n>N$ 
\[
Kr_{0}(\Sigma_{n})<sH^{-1}(\Sigma_{n}).
\]
 So we can divide the integral into three parts, $\int_{\Sigma_{n}\cap B_{sH^{-1}}^{c}}$,
$\int_{\Sigma_{n}\cap B_{Kr_{0}}}$, $\int_{\Sigma_{n}\cap(B_{sH^{-1}}\setminus B_{Kr_{0}})}$.

For $r$ large, choose $\{\bar{x}_{i}\}$ (defined by (\ref{x-bar}))
as the coordinates on $M^{r}$. Define 
\[
h_{\bar{i}\bar{j}}^{r}(\bar{x})=rh_{ij}(r\bar{x}).
\]
Also we denote $h_{\bar{\alpha}\bar{\beta}}^{r}=rh_{\alpha\beta}(r\bar{x})$
and $\bar{3}'$ is used in the similar way. 

Let $\bar{b}=b$ and $\bar{v}$ to be the vector with $\bar{v}^{\bar{3}'}=1,\bar{v}^{\bar{\alpha}}=0,$
so $\bar{v}$ is the unit normal vector of the hypersurface in $\delta_{\bar{i}\bar{j}}$
metric. $ $ And $d\bar{\mu}_{e}$ represents the volume form of the
hypersurface in the metric $\delta_{\bar{i}\bar{j}}.$

We assume 
\[
|h_{ij}|+|x||h_{ij,k}|\leq C_{1}|x|^{-1}.
\]

Note that the scalar curvature $R_{g}$ is $L^{1}$ integrable and
$R_{g}=h_{ij,ij}-h_{ii,jj}+O(|x|^{-4}).$ So $ $$h_{ij,ij}-h_{ii,jj}\in L^{1}$
. Define
\[
F(r)=\int_{M\cap B_{r}^{c}(0)}|h_{ij,ij}-h_{ii,jj}|dvol(M).
\]
We have 
\[
\lim_{r\rightarrow\infty}F(r)=0.
\]

\begin{lem}\label{H^-1 scale}For any $\varepsilon>0$ and any small
$s>0$, we can choose $N_{1}=N_{1}(\varepsilon,s)$ such that when
$n>N_{1}$ 
\[
|\int_{\Sigma_{n}\cap B_{sH^{-1}}^{c}}(H-H_{e})<v_{e},b>_{e}d\mu_{e}-\int_{\Sigma_{n}\cap B_{sH^{-1}}^{c}}(-\frac{1}{2}v^{l}\partial_{i}h_{il}+\frac{1}{2}v^{l}\partial_{l}h_{ii})d\mu_{e}|\leq\frac{2}{9}\varepsilon+C(C_{1})s.
\]

\end{lem}

\begin{proof}Consider $\Sigma^{\frac{2}{H}}.$ From (\ref{Simplified expression}),
we have
\begin{align*}
 & \int_{\Sigma_{n}\cap B_{sH^{-1}}^{c}}(H-H_{e})<v_{e}\cdot b>_{e}d\mu_{e}\\
 & =\int_{\Sigma_{n}^{\frac{2}{H}}\cap B_{s/2}^{c}}(-\frac{1}{2}h_{\bar{\alpha}\bar{\beta}}^{2/H}A_{\bar{\alpha}\bar{\beta}}(M^{\frac{2}{H}})\bar{b}^{\bar{3}'}+\frac{1}{2}h_{\bar{\beta}\bar{3}'}^{2/H}A_{\bar{\alpha}\bar{\beta}}(M^{\frac{2}{H}})\bar{b}^{\bar{\alpha}}\\
 & -\frac{1}{2}\bar{\nabla}_{\bar{\alpha}}h_{\bar{\alpha}\bar{3'}}^{2/H}\bar{b}^{\bar{3}'}+\frac{1}{2}\bar{\nabla}_{\bar{3}'}h_{\bar{\alpha}\bar{\alpha}}^{2/H}\bar{b}^{\bar{3}'})d\bar{\mu}_{e}(\Sigma_{n}^{\frac{2}{H}}).
\end{align*}
From Lemma \ref{Blow down by H/2}, for fixed $s>0$, as $n\rightarrow\infty$,
$\Sigma_{n}^{\frac{2}{H}}\cap B_{s/2}^{c}$ will converge in $C^{2,\alpha}$
sense to $S_{1}(a)\cap B_{s/2}^{c}$. So we have $A_{\bar{\alpha}\bar{\beta}}(\Sigma_{n}^{\frac{2}{H}})-f_{\bar{\alpha}\bar{\beta}}(\Sigma_{n}^{\frac{2}{H}})\rightarrow0$
in $C^{\alpha}$ sense and $\bar{v}^{i}\rightarrow\bar{x}^{i}-a^{i}$.
We know on $\Sigma_{n}\cap B_{sH^{-1}}^{c}$
\[
|h_{\bar{i}\bar{j}}^{r}(\bar{x})|_{C^{0}}\leq Cs^{-1}H,|h_{\bar{i}\bar{j},\bar{k}}^{r}(\bar{x})|_{C^{0}}\leq Cs^{-2}H^{2}.
\]
So we know there is $N_{1}'>0$, such that when $n>N_{1}'$ 
\begin{align*}
 & |\int_{\Sigma_{n}^{\frac{2}{H}}\cap B_{s/2}^{c}}-\frac{1}{2}h_{\bar{\alpha}\bar{\beta}}^{2/H}A_{\bar{\alpha}\bar{\beta}}(\Sigma_{n}^{\frac{2}{H}})\bar{b}^{\bar{3}'}+\frac{1}{2}h_{\bar{\beta}\bar{3}'}^{2/H}A_{\bar{\alpha}\bar{\beta}}(\Sigma_{n}^{\frac{2}{H}})\bar{b}^{\bar{\alpha}}\\
 & -\frac{1}{2}\bar{\nabla}_{\bar{\alpha}}h_{\bar{\alpha}\bar{3'}}^{2/H}\bar{b}^{\bar{3}'}+\frac{1}{2}\bar{\nabla}_{\bar{3}'}h_{\bar{\alpha}\bar{\alpha}}^{2/H}\bar{b}^{\bar{3}'}d\bar{\mu}_{e}(\Sigma_{n}^{\frac{2}{H}})\\
 & -\int_{\Sigma_{n}^{\frac{2}{H}}\cap B_{s/2}^{c}}-\frac{1}{2}h_{\bar{\alpha}\bar{\alpha}}^{2/H}\bar{b}^{\bar{3}'}+\frac{1}{2}h_{\bar{\alpha}\bar{3}'}^{2/H}\bar{b}^{\bar{\alpha}}\\
 & -\frac{1}{2}\bar{\nabla}_{\bar{\alpha}}h_{\bar{\alpha}\bar{3'}}^{2/H}\bar{b}^{\bar{3}'}+\frac{1}{2}\bar{\nabla}_{\bar{3}'}h_{\bar{\alpha}\bar{\alpha}}^{2/H}\bar{b}^{\bar{3}'}d\bar{\mu}_{e}(\Sigma_{n}^{\frac{2}{H}})|\\
\leq & \frac{\varepsilon}{9}.
\end{align*}
It is important to note that
\begin{align*}
 & \int_{\Sigma_{n}^{\frac{2}{H}}\cap B_{s/2}^{c}}-\frac{1}{2}h_{\bar{\alpha}\bar{\alpha}}^{2/H}\bar{b}^{\bar{3}'}+\frac{1}{2}h_{\bar{\alpha}\bar{3}'}^{2/H}\bar{b}^{\bar{\alpha}}-\frac{1}{2}\bar{\nabla}_{\bar{\alpha}}h_{\bar{\alpha}\bar{3'}}^{2/H}\bar{b}^{\bar{3}'}+\frac{1}{2}\bar{\nabla}_{\bar{3}'}h_{\bar{\alpha}\bar{\alpha}}^{2/H}\bar{b}^{\bar{3}'}\\
= & \int_{\Sigma_{n}^{\frac{2}{H}}\cap B_{s/2}^{c}}(-\frac{1}{2}(h_{\bar{\alpha}\bar{\alpha}}^{2/H}+h_{\bar{3}'\bar{3}'}^{2/H})\bar{b}^{\bar{3}'}+\frac{1}{2}h_{\bar{\alpha}\bar{3}'}^{2/H}\bar{b}^{\bar{\alpha}}+\frac{1}{2}h_{\bar{3}'\bar{3}'}^{2/H}\bar{b}^{\bar{3}'}\\
 & -\frac{1}{2}(\bar{\nabla}_{\bar{\alpha}}h_{\bar{\alpha}\bar{3'}}^{2/H}+\bar{\nabla}_{\bar{3}'}h_{\bar{3}'\bar{3}'}^{2/H})\bar{b}^{\bar{3}'}+\frac{1}{2}\bar{\nabla}_{\bar{3}'}h_{\bar{\alpha}\bar{\alpha}}^{2/H}\bar{b}^{\bar{3}'}+\frac{1}{2}\bar{\nabla}_{\bar{3}'}h_{\bar{3}'\bar{3}'}^{2/H}\bar{b}^{\bar{3}'})d\bar{\mu}_{e}(\Sigma_{n}^{\frac{2}{H}})\\
= & \int_{\Sigma_{n}^{\frac{2}{H}}\cap B_{s/2}^{c}}(-\frac{1}{2}h_{\bar{i}\bar{j}}^{2/H}\bar{v}^{\bar{m}}\bar{b}^{\bar{m}}+\frac{1}{2}h_{\bar{i}\bar{l}}^{2/H}\bar{v}^{\bar{l}}\bar{b}^{\bar{i}}\\
 & -\frac{1}{2}\bar{v}^{\bar{l}}\bar{\nabla}_{\bar{i}}h_{\bar{i}\bar{l}}^{2/H}\bar{v}^{\bar{m}}\bar{b}^{\bar{m}}+\frac{1}{2}v^{\bar{l}}\bar{\nabla}_{\bar{l}}h_{\bar{i}\bar{i}}^{2/H}\bar{v}^{\bar{m}}\bar{b}^{\bar{m}})d\bar{\mu}_{e}(\Sigma_{n}^{\frac{2}{H}})\\
= & \int_{\Sigma_{n}^{\frac{2}{H}}\cap B_{s/2}^{c}}(-\frac{1}{2}h_{\bar{i}\bar{j}}^{2/H}\bar{v}^{\bar{m}}\bar{b}^{\bar{m}}+\frac{1}{2}h_{\bar{i}\bar{l}}^{2/H}\bar{v}^{\bar{l}}\bar{b}^{\bar{i}}\\
 & -\frac{1}{2}\bar{v}^{\bar{l}}\partial_{\bar{i}}h_{\bar{i}\bar{l}}^{2/H}(\bar{x}^{\bar{m}}-a^{\bar{m}})\bar{b}^{\bar{m}}+\frac{1}{2}v^{\bar{l}}\partial_{\bar{l}}h_{\bar{i}\bar{i}}^{2/H}(\bar{x}^{\bar{m}}-a^{\bar{m}})\bar{b}^{\bar{m}})d\bar{\mu}_{e}(\Sigma_{n}^{\frac{2}{H}})+o(1)\\
= & \int_{\Sigma_{n}^{\frac{2}{H}}\cap B_{s/2}^{c}}(-\frac{1}{2}h_{\bar{i}\bar{j}}^{2/H}\bar{v}^{\bar{m}}\bar{b}^{\bar{m}}+\frac{1}{2}h_{\bar{i}\bar{l}}^{2/H}\bar{v}^{\bar{l}}\bar{b}^{\bar{i}}\\
 & -\frac{1}{2}\bar{v}^{\bar{l}}\partial_{\bar{i}}h_{\bar{i}\bar{l}}^{2/H}\bar{x}^{\bar{m}}\bar{b}^{\bar{m}}+\frac{1}{2}v^{\bar{l}}\partial_{\bar{l}}h_{\bar{i}\bar{i}}^{2/H}\bar{x}^{\bar{m}}\bar{b}^{\bar{m}})d\bar{\mu}_{e}(\Sigma_{n}^{\frac{2}{H}})\\
 & -\int_{\Sigma_{n}^{\frac{2}{H}}\cap B_{s/2}^{c}}(\frac{1}{2}\bar{v}^{\bar{l}}\partial_{\bar{i}}h_{\bar{i}\bar{l}}^{2/H}-\frac{1}{2}v^{\bar{l}}\partial_{\bar{l}}h_{\bar{i}\bar{i}}^{2/H})d\bar{\mu}_{e}(\Sigma_{n}^{\frac{2}{H}})+o(1).
\end{align*}
This $o(1)$ means $\lim_{n\rightarrow\infty}o(1)=0.$ $\bar{\nabla}_{i}h_{jk}$
can be replaced by $\partial_{i}h_{jk}$ because the difference of
the two are high order terms.

We denote the part of $M^{r}$ between $\partial\Phi_{r}(K')$ and
$\partial\Sigma_{n}^{\frac{2}{H}}$ as $int(\Sigma_{n}^{\frac{2}{H}})$.
Then by divergence formula we have
\begin{align*}
\lefteqn{} & \int_{\Sigma_{n}^{\frac{2}{H}}\cap B_{s/2}^{c}}(-\frac{1}{2}h_{\bar{i}\bar{i}}^{2/H}\bar{v}^{\bar{m}}\bar{b}^{\bar{m}}+\frac{1}{2}h_{\bar{i}\bar{l}}^{2/H}\bar{v}^{\bar{l}}b^{\bar{i}}\\
 & -\frac{1}{2}\bar{v}^{\bar{l}}\partial_{\bar{i}}h_{\bar{i}\bar{l}}^{2/H}\bar{x}^{\bar{m}}\bar{b}^{\bar{m}}+\frac{1}{2}\bar{v}^{\bar{l}}\partial_{\bar{l}}h_{\bar{i}\bar{i}}^{2/H}\bar{x}^{\bar{m}}\bar{b}^{\bar{m}})d\bar{\mu}_{e}(\Sigma_{n}^{\frac{2}{H}})\\
 & =\int_{int(\Sigma_{n}^{\frac{2}{H}})\cap\partial B_{s/2}^{c}(0)}(-\frac{1}{2}h_{\bar{i}\bar{i}}^{2/H}\bar{v}^{\bar{m}}\bar{b}^{\bar{m}}+\frac{1}{2}h_{\bar{i}\bar{l}}^{2/H}\bar{v}^{\bar{l}}b^{\bar{i}}\\
 & -\frac{1}{2}\bar{v}^{\bar{l}}\partial_{\bar{i}}h_{\bar{i}\bar{l}}^{2/H}\bar{x}^{\bar{m}}\bar{b}^{\bar{m}}+\frac{1}{2}\bar{v}^{\bar{l}}\partial_{\bar{l}}h_{\bar{i}\bar{i}}^{2/H}\bar{x}^{\bar{m}}\bar{b}^{\bar{m}})d\bar{\mu}_{e}(\Sigma_{n}^{\frac{2}{H}})\\
 & -\frac{1}{2}\int_{int(\Sigma_{n}^{\frac{2}{H}})\cap B_{s/2}^{c}(0)}(h_{\bar{i}\bar{l},\bar{i}\bar{l}}^{2/H}-h_{\bar{i}\bar{i},\bar{l}\bar{l}}^{2/H})(\bar{x}^{\bar{m}}\bar{b}^{\bar{m}})d\bar{vol}_{e}.
\end{align*}
We know 
\begin{align*}
 & |\int_{int(\Sigma_{n}^{\frac{2}{H}})\cap\partial B_{s/2}(0)}(-\frac{1}{2}h_{\bar{i}\bar{i}}^{2/H}\bar{v}^{\bar{m}}\bar{b}^{\bar{m}}+\frac{1}{2}h_{\bar{i}\bar{l}}^{2/H}\bar{v}^{\bar{l}}b^{\bar{i}}\\
 & -\frac{1}{2}\bar{v}^{\bar{l}}\partial_{\bar{i}}h_{\bar{i}\bar{l}}^{2/H}\bar{x}^{\bar{m}}\bar{b}^{\bar{m}}+\frac{1}{2}\bar{v}^{\bar{l}}\partial_{\bar{l}}h_{\bar{i}\bar{i}}^{2/H}\bar{x}^{\bar{m}}\bar{b}^{\bar{m}})d\bar{\mu}_{e}(\tilde{\Sigma}_{n})|\\
 & \leq C(C_{1})s
\end{align*}
 
\begin{align*}
 & \lefteqn{}|\int_{int(\Sigma_{n}^{\frac{2}{H}})\cap B_{s/2}^{c}(0)}(h_{\bar{i}\bar{l},\bar{i}\bar{l}}^{2/H}-h_{\bar{i}\bar{i},\bar{l}\bar{l}}^{2/H})(\bar{x}^{\bar{m}}\bar{b}^{\bar{m}})d\bar{vol}_{e}|\leq C|\int_{int(\tilde{\Sigma}_{n})\cap B_{s/2}^{c}(0)}|h_{\bar{i}\bar{l},\bar{i}\bar{l}}^{2/H}-h_{\bar{i}\bar{i},\bar{l}\bar{l}}^{2/H}|d\bar{vol}_{e}\\
 & =C\int_{int(\Sigma_{n})\cap B_{sH^{-1}}^{c}(0)}|h_{il,il}-h_{ii,ll}|dvol_{e}\\
 & \leq CF(sH^{-1}).
\end{align*}
And 
\begin{align*}
 & \int_{\Sigma_{n}^{\frac{2}{H}}\cap B_{s/2}^{c}}(\frac{1}{2}\bar{v}^{\bar{l}}\partial_{\bar{i}}h_{\bar{i}\bar{l}}^{2/H}-\frac{1}{2}v^{\bar{l}}\partial_{\bar{l}}h_{\bar{i}\bar{i}}^{2/H})d\bar{\mu}_{e}(\Sigma_{n}^{\frac{2}{H}})\\
= & \int_{\Sigma_{n}\cap B_{sH^{-1}}^{c}}(\frac{1}{2}v^{l}\partial_{i}h_{il}-\frac{1}{2}v^{l}\partial_{l}h_{ii})d\mu_{e}
\end{align*}

So we know 
\begin{align*}
 & |\int_{\Sigma_{n}\cap B_{sH^{-1}}^{c}}(H-H_{e})<v_{e},b>_{e}d\mu_{e}\\
 & -\int_{\Sigma_{n}\cap B_{sH^{-1}}^{c}}(-\frac{1}{2}v^{l}\partial_{i}h_{il}+\frac{1}{2}v^{l}\partial_{l}h_{ii})d\mu_{e}|\\
\leq & \frac{\varepsilon}{9}+o(1)+C(C_{1})s+CF(sH^{-1}).
\end{align*}
From
\[
\lim_{n\rightarrow\infty}F(sH^{-1})=0
\]
So we can choose $N_{1}=N_{1}(\varepsilon,s)$ such that the lemma
holds. 

\end{proof}

\begin{lem}For any $K>0$, there is $N_{2}=N_{2}(\varepsilon,K)>0$,
such that when $n>N_{2},$
\[
|\int_{\Sigma_{n}\cap B_{Kr_{0}}}(H-H_{e})<v_{e},b>_{e}d\mu_{e}-\int_{\Sigma_{n}\cap B_{Kr_{0}}}(-\frac{1}{2}v^{l}\partial_{i}h_{il}+\frac{1}{2}v^{l}\partial_{l}h_{ii})d\mu_{e}|\leq\frac{\varepsilon}{3}.
\]

\end{lem}

\begin{proof}Consider $\Sigma^{r_{0}}.$ From (\ref{Simplified expression})
we have 
\begin{align*}
 & \int_{\Sigma_{n}\cap B_{Kr_{0}}}(H-H_{e})<v_{e}\cdot b>_{e}d\mu_{e}\\
 & =\int_{\Sigma_{n}^{r_{0}}\cap B_{K}}(-\frac{1}{2}h_{\bar{\alpha}\bar{\beta}}^{r_{0}}A_{\bar{\alpha}\bar{\beta}}(M^{r_{0}})\bar{b}^{\bar{3}'}+\frac{1}{2}h_{\bar{\beta}\bar{3}'}^{r_{0}}A_{\bar{\alpha}\bar{\beta}}(M^{r_{0}})\bar{b}^{\bar{\alpha}}\\
 & -\frac{1}{2}\bar{\nabla}_{\bar{\alpha}}h_{\bar{\alpha}\bar{3'}}^{r_{0}}\bar{b}^{\bar{3}'}+\frac{1}{2}\bar{\nabla}_{\bar{3}'}h_{\bar{\alpha}\bar{\alpha}}^{r_{0}}\bar{b}^{\bar{3}'})d\bar{\mu}_{e}(\Sigma_{n}^{\frac{2}{H}}).
\end{align*}
From Lemma \ref{Blow down by r_0^{-1}} and Lemma \ref{no-neck lemma},
$A_{\bar{\alpha}\bar{\beta}}(\Sigma_{n}^{r_{0}})\rightarrow0$ and
$\bar{v}^{\bar{m}}\rightarrow b^{\bar{m}}.$ So as $n\rightarrow\infty$,
we have $\bar{b}^{\bar{3}'}\rightarrow1$ and from
\[
|h_{\bar{i}\bar{j}}^{r_{0}}|+|\bar{x}|^{-1}|h_{\bar{i}\bar{j},\bar{k}}^{r_{0}}|\leq C_{1}|\bar{x}|^{-1}
\]
we have 
\[
\int_{\Sigma_{n}^{r_{0}}\cap B_{K}}-\frac{1}{2}h_{\bar{\alpha}\bar{\beta}}^{r_{0}}A_{\bar{\alpha}\bar{\beta}}(\Sigma_{n}^{r_{0}})\bar{b}^{\bar{3}'}+\frac{1}{2}h_{\bar{\beta}\bar{3}'}^{r_{0}}A_{\bar{\alpha}\bar{\beta}}(\Sigma_{n}^{r_{0}})\bar{b}^{\bar{\alpha}}\rightarrow0
\]
and 
\begin{align*}
 & \int_{\Sigma_{n}^{r_{0}}\cap B_{K}}(-\frac{1}{2}\bar{\nabla}_{\bar{\alpha}}h_{\bar{\alpha}\bar{3'}}^{r_{0}}\bar{b}^{\bar{3}'}+\frac{1}{2}\bar{\nabla}_{\bar{3}'}h_{\bar{\alpha}\bar{\alpha}}^{r_{0}}\bar{b}^{\bar{3}'})d\bar{\mu}_{e}(\Sigma_{n}^{r_{0}})\\
= & \int_{\Sigma_{n}^{r_{0}}\cap B_{K}}(-\frac{1}{2}\bar{\nabla}_{\bar{\alpha}}h_{\bar{\alpha}\bar{3'}}^{r_{0}}\bar{b}^{\bar{3}'}-\frac{1}{2}\bar{\nabla}_{\bar{3}}h_{\bar{3}\bar{3'}}^{r_{0}}\bar{b}^{\bar{3}'})\\
 & +(\frac{1}{2}\bar{\nabla}_{\bar{3}'}h_{\bar{\alpha}\bar{\alpha}}^{r_{0}}\bar{b}^{\bar{3}'}+\frac{1}{2}\bar{\nabla}_{\bar{3}}h_{\bar{3}\bar{3'}}^{r_{0}}\bar{b}^{\bar{3}'})d\bar{\mu}_{e}(\Sigma_{n}^{r_{0}})\\
= & \int_{\Sigma_{n}^{r_{0}}\cap B_{K}}(-\frac{1}{2}\bar{v}^{\bar{l}}\partial_{\bar{i}}h_{\bar{i}\bar{l}}^{r_{0}}+\frac{1}{2}\bar{v}^{\bar{l}}\partial_{\bar{l}}h_{\bar{i}\bar{i}}^{r_{0}})d\bar{\mu}_{e}(\Sigma_{n}^{r_{0}})+o(1)\\
= & \int_{\Sigma_{n}\cap B_{Kr_{0}}}(-\frac{1}{2}v^{l}\partial_{i}h_{il}+\frac{1}{2}v^{l}\partial_{l}h_{ii})d\mu_{e}+o(1).
\end{align*}
So
\begin{align*}
 & |\int_{\Sigma_{n}\cap B_{Kr_{0}}}(H-H_{e})<v_{e},b>_{e}d\mu_{e}\\
 & -\int_{\Sigma_{n}\cap B_{Kr_{0}}}(-\frac{1}{2}v^{l}\partial_{i}h_{il}+\frac{1}{2}v^{l}\partial_{l}h_{ii})d\mu_{e}|.\\
\leq & o(1).
\end{align*}
So we can choose $N_{2}$ such that the lemma holds. 

\end{proof}

\begin{lem}We can choose a small $s>0$ and a large $K>0$ and $N_{3}>0$
such that when $n>N_{3}$,
\begin{align*}
 & |\int_{\Sigma_{n}\cap(B_{sH^{-1}}\setminus B_{Kr_{0}})}(H-H_{e})<v_{e}\cdot b>_{e}d\mu_{e}\\
 & -\int_{\Sigma_{n}\cap(B_{sH^{-1}}\setminus B_{Kr_{0}})}(-\frac{1}{2}v^{l}\partial_{i}h_{il}v^{m}b^{m}+\frac{1}{2}v^{l}\partial_{l}h_{ii}v^{m}b^{m})d\mu_{e}|\\
\leq & \frac{\varepsilon}{3}
\end{align*}

\end{lem}

\begin{proof}
\begin{align*}
 & \int_{\Sigma_{n}\cap(B_{sH^{-1}}\setminus B_{Kr_{0}})}(H-H_{e})<v_{e}\cdot b>_{e}d\mu_{e}\\
= & \int_{\Sigma_{n}\cap(B_{sH^{-1}}\setminus B_{Kr_{0}})}(-\frac{1}{2}h_{\alpha\beta}A_{\alpha\beta}b^{3}+\frac{1}{2}h_{\beta3}A_{\alpha\beta}b^{3}\\
 & -\frac{1}{2}\bar{\nabla}_{\alpha}h_{\alpha3}b^{3}+\frac{1}{2}\bar{\nabla}_{3}h_{\alpha\alpha}b^{3})d\mu_{e}
\end{align*}
From Lemma \ref{The new estimate for the second fundamental form}
we have
\begin{align*}
 & |\int_{\Sigma_{n}\cap(B_{sH^{-1}}\setminus B_{Kr_{0}})}-\frac{1}{2}h_{\alpha\beta}A_{\alpha\beta}b^{3}+\frac{1}{2}h_{\beta3}A_{\alpha\beta}b^{3}d\mu_{e}|\\
\leq & \sum_{i=1}^{l_{n}}\int_{\Sigma_{n}\cap(B_{Kr_{0}e^{iL}}\backslash B_{Kr_{0}e^{(i-1)L}})}C|x|^{-2}(e^{-\frac{i}{2}L}+e^{-\frac{l_{n}-i}{2}L})(r_{0}^{-\frac{1}{2}}+s)^{\frac{1}{2}}d\mu_{e}\\
\leq & C(r_{0}^{-\frac{1}{2}}+s)^{\frac{1}{2}}.
\end{align*}
For the second part, we have
\begin{align*}
 & \int_{\Sigma_{n}\cap(B_{sH^{-1}}\setminus B_{Kr_{0}})}-\frac{1}{2}\bar{\nabla}_{\alpha}h_{\alpha3}b^{3}+\frac{1}{2}\bar{\nabla}_{3}h_{\alpha\alpha}b^{3}d\mu_{e}\\
= & \int_{\Sigma_{n}\cap(B_{sH^{-1}}\setminus B_{Kr_{0}})}-\frac{1}{2}v^{l}\partial_{i}h_{il}v^{m}b^{m}+\frac{1}{2}v^{l}\partial_{l}h_{ii}v^{m}b^{m}d\mu_{e}+o(1)
\end{align*}
For each $n$ we can choose $p_{n}\in\Sigma_{n}\cap(B_{Kr_{0}e^{(\frac{l_{n}}{2}+1)L}}\backslash B_{Kr_{0}e^{\frac{l_{n}}{2}L}})$
such that for $v_{n}=v(p_{n})$ Corollary \ref{Choose a mean value of the normal vector}
holds. So we have
\begin{align*}
 & \int_{\Sigma_{n}\cap(B_{sH^{-1}}\setminus B_{Kr_{0}})}(-\frac{1}{2}v^{l}\partial_{i}h_{il}v^{m}b^{m}+\frac{1}{2}v^{l}\partial_{l}h_{ii}v^{m}b^{m})d\mu_{e}\\
= & \int_{\Sigma_{n}\cap(B_{sH^{-1}}\setminus B_{Kr_{0}})}-\frac{1}{2}v^{l}\partial_{i}h_{il}(v^{m}-v_{n}^{m})b^{m}+\frac{1}{2}v^{l}\partial_{l}h_{ii}(v^{m}-v_{n}^{m})b^{m}d\mu_{e}\\
 & +(v_{n}^{m}b^{m})\int_{\Sigma_{n}\cap(B_{sH^{-1}}\setminus B_{Kr_{0}})}(-\frac{1}{2}v^{l}\partial_{i}h_{il}+\frac{1}{2}v^{l}\partial_{l}h_{ii})d\mu_{e}.
\end{align*}
For the first term on the right hand side, we have:
\begin{align*}
\lefteqn{|\int_{\Sigma_{n}\cap(B_{sH^{-1}}\setminus B_{Kr_{0}})}-\frac{1}{2}v^{l}\partial_{i}h_{il}(v^{m}-v_{n}^{m})b^{m}+\frac{1}{2}v^{l}\partial_{l}h_{ii}(v^{m}-v_{n}^{m})b^{m}d\mu_{e}|}\\
 & \leq\sum_{i=1}^{l_{n}}|\int_{\Sigma_{n}\cap(B_{Kr_{0}e^{iL}}\setminus B_{Kr_{0}e^{(i-1)L}})}-\frac{1}{2}v^{l}\partial_{i}h_{il}(v^{m}-v_{n}^{m})b^{m}+\frac{1}{2}v^{l}\partial_{l}h_{ii}(v^{m}-v_{n}^{m})b^{m}d\mu_{e}|\\
 & \leq\sum_{i=1}^{l_{n}/2}C(e^{-\frac{1}{2}iL}+e^{-\frac{1}{4}l_{n}L})(s+r_{0}^{-\frac{1}{2}})^{\frac{1}{2}}+\sum_{i=l_{n/2}+1}^{l_{n}}C(e^{-\frac{1}{4}l_{n}L}+e^{-\frac{1}{2}(l_{n}-i)L})(s+r_{0}^{-\frac{1}{2}})^{\frac{1}{2}}\\
 & \leq C(s+r_{0}^{-\frac{1}{2}})^{\frac{1}{2}}.
\end{align*}
For the second term, recall Lemma \ref{no-neck lemma}. We can choose
$s$ small, $K$ large and $n$ large such that $|v_{n}-b|\leq\frac{\varepsilon}{9\tilde{C}}$
. So as long as 
\[
\int_{\Sigma_{n}\cap(B_{sH^{-1}}\setminus B_{Kr_{0}})}(-\frac{1}{2}v^{l}\partial_{i}h_{il}+\frac{1}{2}v^{l}\partial_{l}h_{ii})d\mu_{e}
\]
 is bounded by $\tilde{C}>0$ (which will be verified by the following
Lemma \ref{a bound on the intermediate part}),
\[
(v_{n}^{m}b^{m}-1)\int_{\Sigma_{n}\cap(B_{sH^{-1}}\setminus B_{Kr_{0}})}(-\frac{1}{2}v^{l}\partial_{i}h_{il}+\frac{1}{2}v^{l}\partial_{l}h_{ii})d\mu_{e}
\]
is bounded by $\frac{\varepsilon}{9}$. Then we have
\begin{align*}
 & |\int_{\Sigma_{n}\cap(B_{sH^{-1}}\setminus B_{Kr_{0}})}(H-H_{e})<v_{e}\cdot b>_{e}d\mu_{e}\\
 & -\int_{\Sigma_{n}\cap(B_{sH^{-1}}\setminus B_{Kr_{0}})}(-\frac{1}{2}v^{l}\partial_{i}h_{il}v^{m}b^{m}+\frac{1}{2}v^{l}\partial_{l}h_{ii}v^{m}b^{m})d\mu_{e}|\\
\leq & C(s+r_{0}^{-\frac{1}{2}})^{\frac{1}{2}}+o(1)+\frac{\varepsilon}{9}.
\end{align*}
So one can find $N_{3}$ such that when $n>N_{3}$, the lemma holds.
The lemma then follows from Lemma \ref{a bound on the intermediate part}.

\end{proof}

\begin{lem}\label{a bound on the intermediate part} There exists
$\tilde{C}(m)>0$ such that 
\[
|\int_{\Sigma_{n}\cap(B_{sH^{-1}}\setminus B_{Kr_{0}})}(-\frac{1}{2}v^{l}\partial_{i}h_{il}+\frac{1}{2}v^{l}\partial_{l}h_{ii})d\mu_{e}|\leq\tilde{C}.
\]

\end{lem}

\begin{proof} Note that 
\begin{align}
 & |\int_{\Sigma_{n}}(-\frac{1}{2}v^{l}\partial_{i}h_{il}+\frac{1}{2}v^{l}\partial_{l}h_{ii})d\mu_{e}-\int_{\partial B_{r_{0}}}(-\frac{1}{2}v^{l}\partial_{i}h_{il}+\frac{1}{2}v^{l}\partial_{l}h_{ii})d\mu_{e}|\nonumber \\
\leq & \frac{1}{2}\int_{int(\Sigma_{n})\backslash B_{r_{0}}}|h_{il,il}-h_{ii,ll}|d\bar{vol}_{e}\nonumber \\
\leq & CF(r_{0})\rightarrow0.\label{mass relation}
\end{align}
And 
\[
\int_{\partial B_{r_{0}}}(-\frac{1}{2}v^{l}\partial_{i}h_{il}+\frac{1}{2}v^{l}\partial_{l}h_{ii})d\mu_{e}\rightarrow-16\pi m.
\]
\begin{align*}
 & \int_{\Sigma_{n}}(-\frac{1}{2}v^{l}\partial_{i}h_{il}+\frac{1}{2}v^{l}\partial_{l}h_{ii})d\mu_{e}\\
= & \int_{\Sigma_{n}\cap B_{sH^{-1}}^{c}}+\int_{\Sigma_{n}\cap B_{Kr_{0}}}+\int_{\Sigma_{n}\cap(B_{sH^{-1}}\setminus B_{Kr_{0}})}(-\frac{1}{2}v^{l}\partial_{i}h_{il}+\frac{1}{2}v^{l}\partial_{l}h_{ii})d\mu_{e}.
\end{align*}
So the left is to prove that 
\[
\int_{\Sigma_{n}\cap B_{sH^{-1}}^{c}}(-\frac{1}{2}v^{l}\partial_{i}h_{il}+\frac{1}{2}v^{l}\partial_{l}h_{ii})d\mu_{e}
\]
 and 
\[
\int_{\Sigma_{n}\cap B_{Kr_{0}}}(-\frac{1}{2}v^{l}\partial_{i}h_{il}+\frac{1}{2}v^{l}\partial_{l}h_{ii})d\mu_{e}
\]
 are bounded.

For the first one 
\begin{align*}
 & \int_{\Sigma_{n}\cap B_{sH^{-1}}^{c}}(\frac{1}{2}v^{l}\partial_{i}h_{il}-\frac{1}{2}v^{l}\partial_{l}h_{ii})d\mu_{e}\\
= & \int_{\Sigma_{n}^{\frac{2}{H}}\cap B_{\frac{s}{2}}^{c}}(\frac{1}{2}\bar{v}^{\bar{l}}\partial_{\bar{i}}h_{\bar{i}\bar{l}}^{2/H}-\frac{1}{2}v^{\bar{l}}\partial_{\bar{l}}h_{\bar{i}\bar{i}}^{2/H})d\bar{\mu}_{e}(\Sigma_{n}^{\frac{2}{H}})\\
= & \int_{\partial B_{\frac{s}{2}}\cap int(\Sigma_{n}^{\frac{2}{H}})}(\frac{1}{2}\bar{v}^{\bar{l}}\partial_{\bar{i}}h_{\bar{i}\bar{l}}^{2/H}-\frac{1}{2}v^{\bar{l}}\partial_{\bar{l}}h_{\bar{i}\bar{i}}^{2/H})d\bar{\mu}_{e}\\
 & +\int_{int(\Sigma_{n}^{\frac{2}{H}})\cap B_{\frac{s}{2}}^{c}}(h_{\bar{i}\bar{l},\bar{i}\bar{l}}^{2/H}-h_{\bar{i}\bar{i},\bar{l}\bar{l}}^{2/H})d\bar{vol}_{e}.
\end{align*}
 From 
\begin{align*}
 & |\int_{\partial B_{\frac{s}{2}}\cap int(\Sigma_{n}^{\frac{2}{H}})}(\frac{1}{2}\bar{v}^{\bar{l}}\partial_{\bar{i}}h_{\bar{i}\bar{l}}^{2/H}-\frac{1}{2}v^{\bar{l}}\partial_{\bar{l}}h_{\bar{i}\bar{i}}^{2/H})d\bar{\mu}_{e}|\\
\leq & \frac{1}{2}C_{1}|\frac{s}{2}|^{-2}|\partial B_{\frac{s}{2}}|\leq C(C_{1})
\end{align*}
and 
\begin{align*}
 & |\int_{int(\Sigma_{n}^{\frac{2}{H}})\cap B_{\frac{s}{2}}^{c}}(h_{\bar{i}\bar{l},\bar{i}\bar{l}}^{2/H}-h_{\bar{i}\bar{i},\bar{l}\bar{l}}^{2/H})d\bar{vol}_{e}|\\
= & |\int_{int(\Sigma_{n})\cap B_{sH^{-1}}^{c}}(h_{il,il}-h_{ii,ll})dvol_{e}|\\
\leq & F(sH^{-1}).
\end{align*}
So we have 
\[
|\int_{\Sigma_{n}\cap B_{sH^{-1}}^{c}}(\frac{1}{2}v^{l}\partial_{i}h_{il}-\frac{1}{2}v^{l}\partial_{l}h_{ii})d\mu_{e}|\leq C(C_{1})+F(sH^{-1}).
\]
For the second one 
\begin{align*}
 & \int_{\Sigma_{n}\cap B_{Kr_{0}}}(\frac{1}{2}v^{l}\partial_{i}h_{il}-\frac{1}{2}v^{l}\partial_{l}h_{ii})d\mu_{e}\\
= & \int_{\Sigma_{n}^{r_{0}}\cap B_{K}}(\frac{1}{2}\bar{v}^{\bar{l}}\partial_{\bar{i}}h_{\bar{i}\bar{l}}^{r_{0}}-\frac{1}{2}\bar{v}^{\bar{l}}\partial_{\bar{l}}h_{\bar{i}\bar{i}}^{r_{0}})d\bar{\mu}_{e}(\Sigma_{n}^{r_{0}})\\
= & \int_{\partial B_{K}\backslash int(\Sigma_{n}^{r_{0}})}(\frac{1}{2}\bar{v}^{\bar{l}}\partial_{\bar{i}}h_{\bar{i}\bar{l}}^{r_{0}}-\frac{1}{2}\bar{v}^{\bar{l}}\partial_{\bar{l}}h_{\bar{i}\bar{i}}^{r_{0}})d\bar{\mu}_{e}\\
 & +\int_{B_{K}\backslash int(\Sigma_{n}^{r_{0}})}\frac{1}{2}(h_{\bar{i}\bar{l},\bar{i}\bar{l}}^{r_{0}}-h_{\bar{i}\bar{i},\bar{l}\bar{l}}^{r_{0}})d\bar{vol}_{e}.
\end{align*}
 Note that
\begin{align*}
|\int_{\partial B_{K}\backslash int(\Sigma_{n}^{r_{0}})}(\frac{1}{2}\bar{v}^{\bar{l}}\partial_{\bar{i}}h_{\bar{i}\bar{l}}^{r_{0}}-\frac{1}{2}\bar{v}^{\bar{l}}\partial_{\bar{l}}h_{\bar{i}\bar{i}}^{r_{0}})d\bar{\mu}_{e}| & \leq C_{1}K^{-2}|\partial B_{K}|\le C(C_{1}),
\end{align*}
and 
\begin{align*}
|\int_{B_{K}\backslash int(\Sigma_{n}^{r_{0}})}\frac{1}{2}(h_{\bar{i}\bar{l},\bar{i}\bar{l}}^{r_{0}}-h_{\bar{i}\bar{i},\bar{l}\bar{l}}^{r_{0}})d\bar{vol}_{e}| & \leq F(r_{0}).
\end{align*}
So we have 
\[
|\int_{\Sigma_{n}\cap B_{Kr_{0}}}(\frac{1}{2}v^{l}\partial_{i}h_{il}-\frac{1}{2}v^{l}\partial_{l}h_{ii})d\mu_{e}|\leq C(C_{1})+F(r_{0}).
\]

\end{proof}

So for any $\varepsilon>0$ we can choose $s$ small ($s<\frac{\varepsilon}{9C(C_{1})}$)
and $K$ large and $N>\max\{N_{1},N_{2},N_{3}\}$ such that when $n>N$
we have
\[
|\int_{\Sigma_{n}}(H-H_{e})<v_{e},b>_{e}d\mu_{e}-\int_{\Sigma_{n}}(-\frac{1}{2}v^{l}\partial_{i}h_{il}+\frac{1}{2}v^{l}\partial_{l}h_{ii})d\mu_{e}|\leq\varepsilon.
\]
We choose $\varepsilon<\frac{|m|}{2}$. Pay attention to (\ref{mass relation}).
So when $r_{0}$ is sufficiently large 
\[
\int_{\Sigma_{n}}(H-H_{e})<v_{e},b>_{e}d\mu_{e}
\]
 cannot be $0$ which is a contradiction with (\ref{trivial identity}).
So there is $C>0$ such that $r_{1}\leq Cr_{0}.$ 

Now we prove that for a sequence of stable constant mean curvature
$\Sigma_{n}$ which separate $K'$ from infinity, if 
\[
\lim_{n\rightarrow\infty}r_{0}(\Sigma_{n})=\infty
\]
we have 
\[
\lim_{n\rightarrow\infty}\frac{r_{0}}{r_{1}}=1.
\]
We have known that 
\[
r_{0}\leq r_{1}\leq Cr_{0}.
\]
Once the conclusion were false, we could find a subsequence of $\Sigma_{n}$
(also denoted by $\Sigma_{n}$ ) such that 
\[
\lim_{n\rightarrow\infty}\frac{r_{0}}{r_{1}}=k
\]
 with $0<k<1.$ Then by taking a subsequence further, $\Sigma_{n}$
would converge to some $S_{1}(a)$ with $0<|a|<1$ in $C^{2,\alpha}$
sense globally from Lemma \ref{Blow down by H/2}. Choose $b=\frac{-a}{|a|}$

\begin{align*}
 & \int_{\Sigma_{n}}(H-H_{e})<v_{e},b>_{e}d\mu_{e}\\
= & \int_{\Sigma_{n}^{\frac{2}{H}}}(-\frac{1}{2}h_{\bar{i}\bar{j}}^{2/H}\bar{v}^{\bar{m}}\bar{b}^{\bar{m}}+\frac{1}{2}h_{\bar{i}\bar{l}}^{2/H}\bar{v}^{\bar{l}}\bar{b}^{\bar{i}}-\frac{1}{2}\bar{v}^{\bar{l}}\partial_{\bar{i}}h_{\bar{i}\bar{l}}^{2/H}(\bar{x}^{\bar{m}}-a^{\bar{m}})\bar{b}^{\bar{m}}\\
 & +\frac{1}{2}v^{\bar{l}}\partial_{\bar{l}}h_{\bar{i}\bar{i}}^{2/H}(\bar{x}^{\bar{m}}-a^{\bar{m}})\bar{b}^{\bar{m}})d\bar{\mu}_{e}(\Sigma_{n}^{\frac{2}{H}})+o(1)
\end{align*}
By using the same method as Lemma \ref{H^-1 scale} we have 
\begin{align*}
 & \lim_{n\rightarrow\infty}\int_{\Sigma_{n}^{\frac{2}{H}}}(-\frac{1}{2}h_{\bar{i}\bar{j}}^{2/H}\bar{v}^{\bar{m}}\bar{b}^{\bar{m}}+\frac{1}{2}h_{\bar{i}\bar{l}}^{2/H}\bar{v}^{\bar{l}}\bar{b}^{\bar{i}}-\frac{1}{2}\bar{v}^{\bar{l}}\partial_{\bar{i}}h_{\bar{i}\bar{l}}^{2/H}\bar{x}^{\bar{m}}\bar{b}^{\bar{m}}\\
 & +\frac{1}{2}v^{\bar{l}}\partial_{\bar{l}}h_{\bar{i}\bar{i}}^{2/H}\bar{x}^{\bar{m}}\bar{b}^{\bar{m}})d\bar{\mu}_{e}(\Sigma_{n}^{\frac{2}{H}})\rightarrow0
\end{align*}
as $n\rightarrow\infty.$ And 
\begin{align*}
 & \lim_{n\rightarrow\infty}\int_{\Sigma_{n}^{\frac{2}{H}}}(\frac{1}{2}\bar{v}^{\bar{l}}\partial_{\bar{i}}h_{\bar{i}\bar{l}}^{2/H}a^{\bar{m}}\bar{b}^{\bar{m}}-\frac{1}{2}v^{\bar{l}}\partial_{\bar{l}}h_{\bar{i}\bar{i}}^{2/H}a^{\bar{m}}\bar{b}^{\bar{m}})d\bar{\mu}_{e}(\Sigma_{n}^{\frac{2}{H}})\\
= & \lim_{n\rightarrow\infty}(-|a|)\int_{\Sigma_{n}^{\frac{2}{H}}}(\frac{1}{2}\bar{v}^{\bar{l}}\partial_{\bar{i}}h_{\bar{i}\bar{l}}^{2/H}-\frac{1}{2}v^{\bar{l}}\partial_{\bar{l}}h_{\bar{i}\bar{i}}^{2/H})d\bar{\mu}_{e}(\Sigma_{n}^{\frac{2}{H}})\\
= & -|a|m.
\end{align*}
So we get a contradiction with (\ref{trivial identity}). Now we have
proved Theorem \ref{thm1}.

\paragraph{Proof of Corollary \ref{remove Huang's radius}.}

In Huang's case, for $q\in(\frac{1}{2},1]$, by using (\ref{eq:Constraint equations}),
we get the scalar curvature 
\[
R=O(r^{-2-2q})
\]
where $-2-2q<-3.$ So when $q=1$, actually the metric used by Huang
is $C_{1,1}^{4}$-AF. So we can apply Theorem \ref{thm1} and get
the radius pinching estimate 
\[
r_{1}\leq Cr_{0}
\]
 which is sufficient to prove the uniqueness through Huang's uniqueness
theorem.

\paragraph{Proof of Corollary \ref{remove Nerz's radius}.}

Let $(M,g)$ be $C_{1,\tau}^{4}$-AF manifold, with $m>0.$ By Theorem
1, any stable CMC sphere that separates $\tilde{K}$ from infinity
has 
\[
r_{1}\leq Cr_{0}
\]
which implies the first and the second condition in (\ref{Nerz's three conditions}).
Because the genus is $0$, (\ref{integral estimate of H}) implies
the third condition in (\ref{Nerz's three conditions}). So we can
get the uniqueness from Nerz's uniqueness theorem.

\[
\]
\[
\]

\bibliographystyle{plain}
\bibliography{/Users/apple/Documents/Documents/Mathbib}

\begin{thebibliography}{10}

\bibitem{NERZ-CMC}
C.Nerz.
\newblock Foliations by stable spheres with constant mean curvature for
  isolated systems without asymptotic symmetry.
\newblock {\em arxiv:1408.0752v2}.

\bibitem{Huisken-Yau}
G.Huisken and S.T.Yau.
\newblock Definition of center of mass for isolated physical systems and unique
  foliations by stable spheres with constant mean curvature.
\newblock {\em Invent. math}, 124:281--311, 1996.

\bibitem{Huisken-Ilmanen-IMCF}
G.Huisken and T.Ilmanen.
\newblock The inverse mean curvature flow and the riemannian penrose
  inequality.
\newblock {\em J. Differential Geom}, 59:353--437, 2001.

\bibitem{Metzger-prescribe-mean-curvature}
J.Metzger.
\newblock Foliations of asymptotically flat 3-manifolds by 2-surfaces of
  prescribed mean curvature.
\newblock {\em J. Differential Geom}, 77(2):201--236, 2007.

\bibitem{Michael-Simon-Sobolev}
J.Michael and L.Simon.
\newblock Sobolev and mean-value inequalities on generalized submanifolds of
  rn,.
\newblock {\em Comm. Pure. Appl. Math.}, 26:361--379, 1973.

\bibitem{Qing-Tian-CMC}
J.Qing and G.Tian.
\newblock On the uniqueness of the foliation of spheres of constant mean
  curvature in asymptotically flat 3-manifolds.
\newblock {\em Journal of Amer.Math.Soc.}, (4):1091--1110, 2007.

\bibitem{Huang-center-of-mass}
L.H.Huang.
\newblock On the center of mass of isolated systems with general asymptotics.
\newblock {\em Classical Quantum Gravity}, 26(1), 2009.

\bibitem{Huang-CMC}
L.H.Huang.
\newblock Foliations by stable spheres with constant mean curvature for
  isolated systems with general asymptotics.
\newblock {\em Communications in Mathematical Physics}, 300(2):331--373, 2010.

\bibitem{L.Simon-Willmore}
L.Simon.
\newblock Existence of surfaces minimizing the willmore functional.
\newblock {\em Comm. Anal. Geom.}, 1(2):281--326, 1993.

\bibitem{Schoen-Simon-Yau-minimal-surface}
L.Simon, R.Schoen, and S.T.Yau.
\newblock Curvature estimates for minimal hypersurfaces.
\newblock {\em Acta Math.}, 134:275--288, 1975.

\bibitem{Eichmair-Metzger-iso-surface}
M.Eichmair and J.Metzger.
\newblock Large isopserimetric surfaces in initial data sets.
\newblock {\em J. Differential Geom.}, 94(1):159--186, 2013.

\bibitem{Eichmair-Metzger-iso-all-dim}
M.Eichmair and J.Metzger.
\newblock Unique isoperimetric foliations of asymptotically flat manifolds in
  all dimensions.
\newblock {\em Invent. Math.}, 194(3):591--630, 2013.

\bibitem{P.Li-Yau-Conformal-invariant}
P.Li and S.T.Yau.
\newblock A new conformal invariant and its application to the willmore
  conjecture and the first eigenvalue of the compact surfaces.
\newblock {\em Invent. Math.}, 69:269--291, 1982.

\bibitem{Bartnik-mass}
R.Bartnik.
\newblock The mass of an asymptotically flat manifold.
\newblock {\em Comm.Pure Appl.Math.}, 39(5), 1986.

\bibitem{OUTLYING-CMC}
S.Brendle and M.Eichmair.
\newblock Large outlying stable constant mean curvature spheres in initial data
  sets.
\newblock {\em arXiv:1303.3545v2}, 2013.

\bibitem{Shiguang-Ma-CMC}
S.Ma.
\newblock Uniqueness of the foliation of constant mean curvature spheres in
  asymptotically flat 3-manifolds.
\newblock {\em Pacific Journal of Mathematics}, 252(1):145--179, 9 2011.

\end{thebibliography}

\[
\]

\[
\]
 
\[
\]

\end{document}